\newlength{\defbaselineskip}
\newcounter{marnote}
\newcommand{\setlinespacing}[1]%
           {\setlength{\baselineskip}{#1 \defbaselineskip}}
\theoremstyle{plain}
\newtheorem{theorem}{Theorem}[section]
\newtheorem{lemma}[theorem]{Lemma}
\newtheorem{prop}[theorem]{Proposition}
\theoremstyle{definition}
\theoremstyle{remark}
\newtheorem{remark}{Remark}[section]
\numberwithin{equation}{section}
\begin{document}


\title{Gradient estimates for solutions of the Lam\'{e} system with partially infinite coefficients in  dimensions greater than two}

\author{JiGuang Bao\footnote{School of Mathematical Sciences, Beijing Normal University, Laboratory of Mathematics and Complex Systems, Ministry of
   Education, Beijing 100875,
China. Email: jgbao@bnu.edu.cn.}\quad HaiGang
Li\footnotemark[1]~\footnote{Corresponding author. Email:
hgli@bnu.edu.cn.}\quad and\quad YanYan Li\footnote{Department of
Mathematics, Rutgers University, 110 Frelinghuysen Rd, Piscataway,
NJ 08854, USA. Email: yyli@math.rutgers.edu.} }

\maketitle


\begin{abstract}
We establish upper bounds on the blow-up rate of the gradients of solutions of the Lam\'{e} system with partially infinite coefficients in dimensions greater than two as the distance between the surfaces of discontinuity of the coefficients of the system tends to zero.
\end{abstract}

\section{Introduction and main results}

In this paper, we establish upper bounds on the blow-up rate of the gradients of solutions of the Lam\'{e} system with partially infinite coefficients in dimensions greater than two as the distance between the surfaces of discontinuity of the coefficients of the system tends to zero. This work is stimulated by the study of Babu\u{s}ka, Andersson, Smith and Levin in \cite{ba} concerning initiation and growth of damage in composite materials.  The Lam\'{e} system is assumed and they computationally analyzed the damage and fracture in composite materials. They observed numerically that the size of the strain tensor remains bounded when the distance $\epsilon$, between two inclusions, tends to zero.
This was  proved by Li and Nirenberg in \cite{ln}.  Indeed such $\epsilon$-independent gradient estimates was established there for solutions of divergence form second order elliptic systems, including linear systems of elasticity, with piecewise H\"{o}lder continuous coefficients in all dimensions. See Bonnetier and Vogelius \cite{bv}
and Li and Vogelius  \cite{lv}  for corresponding results on divergence form elliptic equations.

The estimates in \cite{ln} and \cite{lv} depend on the ellipticity of the coefficients. If ellipticity constants are allowed to deteriorate, the situation is very different.
Consider the scalar equation
\begin{equation}\label{equk}
\begin{cases}
\nabla\cdot\Big(a_{k}(x)\nabla{u}_{k}\Big)=0&\mbox{in}~\Omega,\\
u_{k}=\varphi&\mbox{on}~\partial\Omega,
\end{cases}
\end{equation}
where $\Omega$ is a bounded open set of $\mathbb{R}^{d}$, $d\geq2$, containing two $\epsilon$-apart convex inclusions $D_{1}$ and $D_{2}$, $\varphi\in{C}^{2}(\partial\Omega)$ is given, and
$$a_{k}(x)=
\begin{cases}
k\in(0,\infty)&\mbox{in}~D_{1}\cup{D}_{2},\\
1&\mbox{in}~\Omega\setminus\overline{D_{1}\cup{D}_{2}}.
\end{cases}
$$
When $k=\infty$,
the $L^\infty$-norm of $|\nabla u_\infty|$ for the solutions $u_{\infty}$ of  (\ref{equk}) generally becomes unbounded as $\epsilon$ tends to $0$.  The blow up rate of $|\nabla u_\infty|$ is respectively
 $\epsilon^{-1/2}$ in dimension $d=2$,
$(\epsilon|\ln\epsilon|)^{-1}$ in
dimension $d=3$,  and  $\epsilon^{-1}$  in dimension $d\ge 4$. See
Bao, Li and Yin \cite{bly1}, as well as Budiansky and Carrier \cite{bc},
  Markenscoff \cite{m}, Ammari, Kang and Lim \cite{akl},
 Ammari, Kang, Lee, Lee and Lim \cite{aklll} and Yun  \cite{y1,y2}.
Further, more detailed, characterizations of the singular behavior of $\nabla{u}_{\infty}$ have been obtained by Ammari, Ciraolo, Kang, Lee and Yun \cite{ackly}, Ammari, Kang, Lee, Lim and Zribi \cite{AKLLZ}, Bonnetier and Triki \cite{bt0, bt}, Gorb and Novikov \cite{gn} and
 Kang, Lim and Yun \cite{kly, kly2}. For related works, see \cite{abtv, adkl, agkl, bly2,  bt, bcn, dong, dongxiong, dongzhang, FKNN, keller1, keller2, Li-Li, llby, ly, ly2, MNT, x, BaoXiong} and the references therein.

In this paper, we mainly investigate the gradient estimates for the Lam\'{e} system with partially infinite coefficients in dimension $d=3$, a physically relevant dimension.
This paper is a continuation of \cite{bll}, where the estimate for dimension $d=2$, another physically relevant dimension, is established. We prove that $(\epsilon|\ln\epsilon|)^{-1}$ is an   upper bound of the blow up rate of the strain tensor in dimension three, the same as the scalar equation case mentioned above. New difficulties need to be overcome, and a number of refined estimates, via appropriate iterations, are used in our proof. We also prove that $\epsilon^{-1}$ is an upper bound of the  blow up rate of the strain tensor in dimension $d\ge 4$, which is also the same as the scalar equation case. Note that it has been proved in \cite{bly1} that these upper bounds in dimension $d\ge 3$ are optimal in the scalar equation case.

We consider the Lam\'{e} system in linear elasticity with piecewise constant coefficients, which is stimulated by the study of composite media with closely spaced interfacial boundaries. Let $\Omega\subset\mathbb{R}^{3}$ be a bounded open set with $C^{2}$ boundary, and $D_{1}$ and $D_{2}$ are two disjoint convex open sets in $\Omega$ with
$C^{2,\gamma}$ boundaries, $0<\gamma<1$, which are $\epsilon$ apart and far away from $\partial{\Omega}$, that is,
\begin{equation}\label{omega}
\begin{array}{l}
\displaystyle \overline{D}_{1},\overline{D}_{2}\subset\Omega,\quad\mbox{the principle curvatures of }\partial{D}_{1},\partial{D}_{2}\geq\kappa_{0}>0,\\
\displaystyle \epsilon:=\mathrm{dist}(D_{1},D_{2})>0,\quad \mathrm{dist}(D_{1}\cup{D}_{2},\partial{\Omega})>\kappa_{1}>0,
\end{array}
\end{equation}
where $\kappa_{0},\kappa_{1}$ are constants independent of $\epsilon$. We also assume that the $C^{2,\gamma}$ norms of $\partial{D}_{i}$ are bounded by some constant independent of $\epsilon$. This implies that each $D_{i}$ contains a ball of radius $r_{0}^{*}$ for some constant $r_{0}^{*}>0$ independent of $\epsilon$. Denote $$\widetilde{\Omega}:=\Omega\setminus\overline{D_{1}\cup{D}_{2}}.$$
Assume that $\widetilde{\Omega}$ and $D_{1}\cup{D}_{2}$ are occupied, respectively, by two different isotropic and homogeneous materials with different Lam\'{e} constants $(\lambda,\mu)$ and $(\lambda_{1},\mu_{1})$. Then the elasticity tensors for the inclusions and the background can be written, respectively, as  $\mathbb{C}^{1}$ and $\mathbb{C}^{0}$, with
$$C_{ij\,kl}^{1}=\lambda_{1}\delta_{ij}\delta_{kl}+\mu_{1}(\delta_{ik}\delta_{jl}+\delta_{il}\delta_{jk}),$$
and
\begin{equation}\label{C0}
C_{ij\,kl}^{0}=\lambda\delta_{ij}\delta_{kl}+\mu(\delta_{ik}\delta_{jl}+\delta_{il}\delta_{jk}),
\end{equation}
where $i,j,k,l=1,2,3$ and $\delta_{ij}$ is the Kronecker symbol: $\delta_{ij}=0$ for $i\neq{j}$, $\delta_{ij}=1$ for $i=j$.
Let $u=\left(u_{1},u_{2},u_{3}\right)^{T}:~\Omega\rightarrow\mathbb{R}^{3}$ denote the displacement field.
For a given vector valued function $\varphi$, we consider the following Dirichlet problem for the Lam\'{e} system
\begin{equation}\label{system}
\begin{cases}
\nabla\cdot\bigg(\left(\chi_{\widetilde{\Omega}}\mathbb{C}^{0}+\chi_{D_{1}\cup{D}_{2}}\mathbb{C}^{1}\right)e(u)\bigg)=0,
&\mbox{in}~\Omega,\\
u=\mathbf{\varphi},&\mbox{on}~\partial{\Omega},
\end{cases}
\end{equation}
where $\chi_{D}$ is the characteristic function of $D\subset\mathbb{R}^{3}$,
$$e(u)=\frac{1}{2}\left(\nabla{u}+(\nabla{u})^{T}\right)$$
is the strain tensor.

Assume that the standard ellipticity condition holds for \eqref{system}, that is,
\begin{equation}\label{coeff3_convex}
\mu>0,\quad\,3\lambda+2\mu>0;\quad\quad\mu_{1}>0,\quad\,3\lambda_{1}+2\mu_{1}>0.
\end{equation}
For $\varphi\in{H}^{1}(\Omega;\mathbb{R}^{3})$, it is well known that there exists a unique solution $u\in{H}^{1}(\Omega;\mathbb{R}^{3})$ of the Dirichlet problem \eqref{system}, which is also the minimizer of the energy functional
$$J_{1}[u]=\frac{1}{2}\int_{\Omega}
\bigg(\left(\chi_{\widetilde{\Omega}}\mathbb{C}^{0}+\chi_{D_{1}\cup{D}_{2}}\mathbb{C}^{1}\right)e(u),e(u)\bigg)dx$$
on
$${H}_{\varphi}^{1}(\Omega;\mathbb{R}^{3}):=\left\{~u\in{H}^{1}(\Omega;\mathbb{R}^{3})~\big|~u-\varphi\in{H}_{0}^{1}(\Omega;\mathbb{R}^{3})~\right\}.$$
More details can be found in the Appendix in \cite{bll}.

Introduce the linear space of rigid displacement in $\mathbb{R}^{3}$,
$$\Psi:=\bigg\{\psi\in{C}^{1}(\mathbb{R}^{3};\mathbb{R}^{3})~\big|~\nabla\psi+(\nabla\psi)^{T}=0~\bigg\},$$
equivalently,
$$\Psi=\mathrm{span}
\left\{~\psi^{1}=
\begin{pmatrix}
~ 1~ \\
 0 \\
 0 \\
\end{pmatrix},~\psi^{2}=\begin{pmatrix}
 ~0~ \\
 1 \\
 0 \\
\end{pmatrix},~\psi^{3}=\begin{pmatrix}
~ 0~ \\
 0 \\
 1 \\
\end{pmatrix},~\psi^{4}=\begin{pmatrix}
x_{2} \\
-x_{1} \\
0 \\
\end{pmatrix},~\psi^{5}=\begin{pmatrix}
x_{3} \\
0 \\
-x_{1} \\
\end{pmatrix},~\psi^{6}=\begin{pmatrix}
0 \\
x_{3} \\
-x_{2} \\
\end{pmatrix}~\right\}.
$$
If $\xi\in{H}^{1}(D; \mathbb{R}^{3})$, $e(\xi)=0$ in $D$,
and $D\subset \mathbb{R}^{3}$ is a connected  open set, then $\xi$ is a linear combination of $\{\psi^{\alpha}\}$ in $D$. If an element $\xi$ in $\Psi$ vanishes at three non-collinear points, then $\xi\equiv0$, see Lemma \ref{lem_xi}.

For fixed $\lambda$ and $\mu$ satisfying $\mu>0$ and $3\lambda+2\mu>0$, denoting $u_{\lambda_{1},\mu_{1}}$ the solution of \eqref{system}. Then, as proved in the Appendix in \cite{bll},
$$u_{\lambda_{1},\mu_{1}}\rightarrow\,u\quad\mbox{in}~H^{1}(\Omega;\mathbb{R}^{3})\quad
\mbox{as}~~\min\{\mu_{1},3\lambda_{1}+2\mu_{1}\}\rightarrow\infty,$$
where $u$ is a $H^{1}(\Omega;\mathbb{R}^{3})$ solution of
\begin{equation}\label{mainequation}
\begin{cases}
\mathcal{L}_{\lambda,\mu}u:=\nabla\cdot\left(\mathbb{C}^{0}e(u)\right)=0,&\mbox{in}~\widetilde{\Omega},\\
u\big|_{+}=u\big|_{-},&\mbox{on}~\partial{D}_{1}\cup\partial{D}_{2},\\
e(u)=0,&\mbox{in}~D_{1}\cup{D}_{2},\\
\int_{\partial{D}_{i}}\frac{\partial{u}}{\partial\nu_{0}}\big|_{+}\cdot\psi^{\alpha}=0,&i=1,2,~~\alpha=1,2,\cdots,6,\\
u=\varphi,&\mbox{on}~\partial{\Omega},
\end{cases}
\end{equation}
where
$$\frac{\partial{u}}{\partial\nu_{0}}\bigg|_{+}:=\left(\mathbb{C}^{0}e(u)\right)\vec{n}
=\lambda\left(\nabla\cdot{u}\right)\vec{n}+\mu\left(\nabla{u}+(\nabla{u})^{T}\right)\vec{n}.$$
and $\vec{n}$ is the unit outer normal of $D_{i}$, $i=1,2$. Here and throughout this paper the subscript $\pm$ indicates the limit from outside and inside the domain, respectively. In this paper we study solutions of \eqref{mainequation}, a Lam\'{e} system with infinite coefficients in $D_{1}\cup{D}_{2}$.

The existence, uniqueness and regularity of weak solutions of \eqref{mainequation}, as well as a variational formulation, can be found in the Appendix in \cite{bll}. In particular, the  $H^{1}$ weak solution is in ${C}^{1}(\overline{\widetilde{\Omega}};\mathbb{R}^{3})\cap{C}^{1}(\overline{D_{1}\cup{D}_{2}};\mathbb{R}^{3})$. The solution is also the unique
function which has the least energy in appropriate functional spaces,
characterized by
$$
I_{\infty}[u]=\min_{v\in\mathcal{A}}I_{\infty}[v],
$$
where
$$
I_{\infty}[v]:=
\frac{1}{2}\int_{\widetilde \Omega}\left(\mathbb{C}^{(0)}
e(v),e(v)\right)dx,
$$
and
$$
\mathcal{A}:=\left\{u\in{H}_{\varphi}^{1}(\Omega;\mathbb{R}^{3})
~\big|~e(u)=0~~\mbox{in}~D_1\cup D_2\right\}.
$$

It is well known, see \cite{osy},  that for any open set $O$ and $u,v\in{C}^{2}(O)$,
\begin{equation}\label{equ4-1}
\int_{O}\left(\mathbb{C}^{0}e(u),e(v)\right)dx
=-\int_{O}\left(\mathcal{L}_{\lambda,\mu}u\right)\cdot{v}
+\int_{\partial O}\frac{\partial{u}}{\partial\nu_{0}}\bigg|_{+}\cdot{v}.
\end{equation}
A calculation gives
\begin{align}\label{L_u}
\left(\mathcal{L}_{\lambda,\mu}u\right)_{k}=\mu\Delta{u}_{k}+(\lambda+\mu)\partial_{x_{k}}\left(\nabla\cdot{u}\right),\quad\,k=1,2,3.
\end{align}
We assume that for some $\delta_{0}>0$,
\begin{equation}\label{coeff4_strongelyconvex}
\delta_{0}\leq\mu,3\lambda+2\mu\leq\frac{1}{\delta_{0}}.
\end{equation}
Since $D_{1}$ and $D_{2}$ are two strictly convex subdomains of $\Omega$, there exist two points $P_{1}\in\partial{D}_{1}$ and $P_{2}\in\partial{D}_{2}$ such that
\begin{equation}\label{P1P2}
\mathrm{dist}(P_{1},P_{2})=\mathrm{dist}(\partial{D}_{1},\partial{D}_{2})=\epsilon.
\end{equation}
Use $\overline{P_{1}P_{2}}$ to denote the line segment connecting $P_{1}$ and $P_{2}$. 
Throughout the paper, unless otherwise stated, $C$ denotes a constant, whose values may vary from line to line, depending only on $d,\kappa_{0},\kappa_{1},\gamma,\delta_{0}$,  and an upper bound of the $C^{2}$ norm of $\partial\Omega$ and the $C^{2,\gamma}$ norms of $\partial{D}_{1}$ and $\partial{D}_{2}$, but not on $\epsilon$. Also, we call a constant having such dependence a {\it universal constant}. The main result of this paper is for dimension three.

\begin{theorem}\label{mainthm1}
Assume that $\Omega$, $D_{1},D_{2}$, $\epsilon$ are defined in \eqref{omega}, $\lambda$ and $\mu$ satisfy \eqref{coeff4_strongelyconvex} for some $\delta_{0}>0$,  and $\varphi\in{C}^{2}(\partial\Omega;\mathbb{R}^{3})$. Let $u\in{H}^{1}(\Omega;\mathbb{R}^{3})\cap{C}^{1}(\overline{\widetilde{\Omega}};\mathbb{R}^{3})$ be the solution of \eqref{mainequation}. Then for $0<\epsilon<1/2$, we have
\begin{equation}\label{normbound}
\|\nabla{u}\|_{L^{\infty}(\Omega;\mathbb{R}^{3})}
\leq\frac{C}{\epsilon|\ln\epsilon|}\|\varphi\|_{C^{2}(\partial{\Omega};\mathbb{R}^{3})},
\end{equation}
where $C$ is a universal constant.
\end{theorem}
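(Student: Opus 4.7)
The plan is to follow the framework of \cite{bly1,bll,ln} but with refinements tailored to three dimensions. First I would decompose the solution as
\[
 u(x) = \sum_{i=1}^{2}\sum_{\alpha=1}^{6} C_i^{\alpha}\, v_i^{\alpha}(x) + v_0(x), \qquad x\in\widetilde{\Omega},
\]
where each $v_i^{\alpha}\in C^{1}(\overline{\widetilde\Omega};\mathbb{R}^3)$ solves $\mathcal{L}_{\lambda,\mu}v_i^{\alpha}=0$ in $\widetilde\Omega$ with boundary data $v_i^{\alpha}=\psi^{\alpha}$ on $\partial D_i$, $=0$ on $\partial D_{3-i}\cup\partial\Omega$, and $v_0$ carries the data $\varphi$ on $\partial\Omega$ with $0$ on $\partial D_i$. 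The constants $C_i^{\alpha}\in\mathbb{R}$ are determined by extending $u$ inside $D_i$ as the rigid motion $u|_{D_i}=\sum_{\alpha}C_i^{\alpha}\psi^{\alpha}$. Because $u$ is continuous across $\partial D_i$, this decomposition matches any admissible $u$, and the constants $C_i^{\alpha}$ are fixed uniquely by the six integral conditions $\int_{\partial D_i}\frac{\partial u}{\partial\nu_0}|_{+}\cdot\psi^{\beta}=0$.

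The second step is to control $|\nabla v_i^{\alpha}|$ pointwise in the narrow neck $\{|x'|\lesssim 1\}\cap\widetilde\Omega$ between $P_1$ and $P_2$. After flattening the two boundaries to paraboloids of height $\sim|x'|^{2}$ using the principal-curvature bound $\kappa_0$, I would introduce an explicit interpolant $\bar v_i^{\alpha}$ built from the rigid motion $\psi^{\alpha}$ linearly in the $x_3$ direction. Standard energy arguments and the maximum-principle-like tools for the Lam\'e operator (using (\ref{equ4-1})) yield
\[
 |\nabla v_i^{\alpha}(x)| \le \frac{C}{\epsilon+|x'|^{2}} \quad (\alpha=1,2,3), \qquad
 |\nabla v_i^{\alpha}(x)| \le \frac{C|x'|}{\epsilon+|x'|^{2}} \quad (\alpha=4,5,6),
\]
in the narrow neck, with $|\nabla v_i^{\alpha}|\le C$ away from it. These bounds (together with $|\nabla v_0|\le C\|\varphi\|_{C^2}$) already give the pointwise gradient estimate $|\nabla u|\le C/\epsilon+C\sum|C_i^{\alpha}-C_{3-i}^{\alpha}|/\epsilon$ once we localize the jumps; the remaining work is to show that the discrepancies $C_1^{\alpha}-C_2^{\alpha}$ are $O(1/|\ln\epsilon|)$.

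Third, plug the decomposition into the six orthogonality conditions to arrive at a $12\times 12$ linear system $A(\epsilon)\,C=b$ where $A_{i\alpha,j\beta}=\int_{\partial D_i}\frac{\partial v_j^{\beta}}{\partial\nu_0}|_{+}\cdot\psi^{\alpha}$ and $b$ involves $v_0$. Using \eqref{equ4-1} the matrix entries become energy integrals $\int_{\widetilde\Omega}(\mathbb{C}^{0}e(v_j^{\beta}),e(v_i^{\alpha}))\,dx$, which I would evaluate by splitting $\widetilde\Omega$ into the narrow neck and its complement and using the explicit interpolant. In the neck, the integrand for $i=j$, $\alpha=\beta\in\{1,2,3\}$ (pure translations) behaves like $|\nabla\bar v|^2\sim (\epsilon+|x'|^2)^{-2}$, and integration in $x'\in\mathbb{R}^{2}$ produces the logarithm $\int_{0}^{r_0}\frac{r\,dr}{(\epsilon+r^2)}\sim\tfrac{1}{2}|\ln\epsilon|$; all other entries turn out to be $O(1)$ (or, for the rotational block, $O(1)$ with only subleading divergences). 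Thus to leading order $A(\epsilon)$ has a dominant diagonal block of size $|\ln\epsilon|$ from the translational entries, while the right-hand side stays bounded by $\|\varphi\|_{C^2}$. Solving yields
\[
 |C_1^{\alpha}-C_2^{\alpha}|\le \frac{C\|\varphi\|_{C^{2}}}{|\ln\epsilon|}\quad(\alpha=1,2,3),\qquad
 |C_1^{\alpha}-C_2^{\alpha}|\le C\|\varphi\|_{C^{2}}\quad(\alpha=4,5,6),
\]
which, substituted back together with the pointwise estimates on $\nabla v_i^{\alpha}$, delivers \eqref{normbound}.

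The hardest step is the third one. Unlike the scalar equation, the $12\times 12$ matrix has a block structure mixing translations and rotations, and the off-diagonal interaction terms can be of the same order as the dominant translational terms unless one extracts cancellations carefully. The refined estimate is obtained through the iteration hinted at in the introduction: one first shows the integrals are captured up to lower order by the explicit interpolant $\bar v_i^{\alpha}$, then corrects the interpolant by an auxiliary function solving an inhomogeneous Lam\'e problem, and re-estimates the integrals to isolate the $|\ln\epsilon|$ coefficient and to verify the non-singularity of the limiting block. This iteration, together with the rotational-translational decoupling coming from symmetry across $\overline{P_1P_2}$, is where the technical core of the paper lies.
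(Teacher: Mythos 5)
Your overall framework tracks the paper closely: the decomposition into $v_i^{\alpha}$ and $v_0$, the pointwise gradient bounds for $v_i^{\alpha}$ of size $(\epsilon+|x'|^2)^{-1}$ for translations and $|x'|(\epsilon+|x'|^2)^{-1}$ for rotations, the reduction to a linear system from the six flux conditions, and the observation that the translational diagonal entries carry the $|\ln\epsilon|$ factor. But there is a genuine gap in the step where you go from the gradient bounds on $v_i^{\alpha}$ to a pointwise estimate on $\nabla u$.

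You write that the bounds on $|\nabla v_i^{\alpha}|$, $|\nabla v_0|$ already give $|\nabla u|\le C/\epsilon + C\sum|C_1^{\alpha}-C_2^{\alpha}|/\epsilon$, and that the remaining task is to show $|C_1^{\alpha}-C_2^{\alpha}|=O(1/|\ln\epsilon|)$. This does not work: the stray $C/\epsilon$ term already dominates $1/(\epsilon|\ln\epsilon|)$, so even a perfect bound on the differences leaves you with $\|\nabla u\|_{\infty}=O(1/\epsilon)$, not the claimed $O(1/(\epsilon|\ln\epsilon|))$. The reason you arrived at $C/\epsilon$ is that you treated $C_1^{\alpha}\nabla v_1^{\alpha}$ and $C_2^{\alpha}\nabla v_2^{\alpha}$ separately for $\alpha=1,2,3$, each of size $C/(\epsilon+|x'|^2)$ with $O(1)$ coefficient. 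The crucial missing ingredient is the cancellation estimate
\[
\bigl\|\nabla(v_1^{\alpha}+v_2^{\alpha})\bigr\|_{L^{\infty}(\widetilde\Omega)}\le C,\qquad\alpha=1,2,3,
\]
which holds because the boundary data of $v_1^{\alpha}+v_2^{\alpha}$ equals $\psi^{\alpha}$ on both $\partial D_1$ and $\partial D_2$, so the ``spike'' in $\bar u$ and $\underline u$ (which are dual in the sense $\bar u+\underline u\equiv 1$ in the neck) cancels. Once this is in hand, one rewrites
\[
\sum_{\alpha=1}^3 C_1^{\alpha}\nabla v_1^{\alpha}+C_2^{\alpha}\nabla v_2^{\alpha}
=\sum_{\alpha=1}^3(C_1^{\alpha}-C_2^{\alpha})\nabla v_1^{\alpha}+\sum_{\alpha=1}^3 C_2^{\alpha}\nabla(v_1^{\alpha}+v_2^{\alpha}),
\]
and the second sum is uniformly bounded, so the singular contribution reduces to $\frac{C}{|\ln\epsilon|(\epsilon+|x'|^2)}$ from the first sum plus a sub-critical $\frac{C|x'|}{\epsilon+|x'|^2}+C$ from the rotational terms. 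Without this cancellation lemma (the paper's estimate \eqref{mainev1+23}, Lemma \ref{lem_v3_v1+v2}) your argument cannot reach the rate in \eqref{normbound}, and as written it is internally inconsistent. Also note that your rotational gradient bound should carry a $+C$ tail (valid for all $x\in\widetilde\Omega$, not just the neck), but that is a minor point; the cancellation for $\alpha=1,2,3$ is the step that must be added.
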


\begin{remark}\label{rem1}
The proof of Theorem \ref{mainthm1} actually gives the following stronger estimates:
\begin{equation}\label{mainestimates}
|\nabla{u}(x)|\leq\bigg(\dfrac{C}{|\ln\epsilon|\big(\epsilon+\mathrm{dist}^{2}(x,\overline{P_{1}P_{2}})\big)}
+\dfrac{C\mathrm{dist}(x,\overline{P_{1}P_{2}})}{\epsilon+\mathrm{dist}^{2}(x,\overline{P_{1}P_{2}})}\bigg)
~\|\varphi\|_{C^{2}(\partial{\Omega};\mathbb{R}^{3})},
~~x\in\widetilde{\Omega},
\end{equation}
and
\begin{equation}\label{mainestimatesD1D2}
|\nabla{u}(x)|\leq\,C\|\varphi\|_{C^{2}(\partial{\Omega};\mathbb{R}^{3})},\qquad\,x\in{D}_{1}\cup{D}_{2}.
\end{equation}
\end{remark}

\begin{remark}\label{rem2}
The strict convexity assumption on $\partial D_1$ and $\partial D_2$
can be replaced by a weaker relative strict convexity assumption, see \eqref{h1h23} in Section \ref{sec_gradient123}.
\end{remark}

\begin{remark}\label{rem3}
Here $\varphi\in{C}^{2}(\partial\Omega;\mathbb{R}^{3})$ can be replaced by $\varphi\in{H}^{1/2}(\partial\Omega;\mathbb{R}^{3})$. Indeed, the $H^{1}$ norm of the solution $u$ in $\Omega$ is bounded by a universal constant. Then standard elliptic estimates give a universal bound of $u$ in $C^{2}$ norm in $\left\{x\in\Omega~\big|~\frac{\kappa_{1}}{4}<\mathrm{dist}(x,\partial\Omega)<\frac{\kappa_{1}}{2}\right\}$. We apply the theorem in $\Omega':=\left\{x\in\Omega~\big|~\mathrm{dist}(x,\partial\Omega)>\frac{\kappa_{1}}{3}\right\}$ with $\varphi':=u\big|_{\partial\Omega'}$.
\end{remark}

\begin{remark}\label{rem4}
Since the blow up rate of $|\nabla u_\infty|$ for solutions
of the scalar equation (\ref{equk}) when $k=\infty$ is known to reach the magnitude
 $(\epsilon|\ln\epsilon|)^{-1}$ in dimension three, see \cite{bly1},
estimate (\ref{normbound}) is expected to be
optimal.
\end{remark}

Following arguments in the proof of Theorem \ref{mainthm1}, we establish the corresponding estimates for higher dimensions $d\geq4$. Let $\Omega\subset\mathbb{R}^{d}$, $d\geq4$ be a bounded open set with $C^{2}$ boundary, and $D_{1}$ and $D_{2}$ are two disjoint convex open sets in $\Omega$ with
$C^{2,\gamma}$ boundaries, satisfying \eqref{omega}.
Let $\mathbb{C}^{0}$ be given by \eqref{C0} with $i,j,k,l=1,2,\cdots,d$, where $\lambda$ and $\mu$ satisfy
$$\mu>0,\quad\,d\lambda+2\mu>0,$$
and
\begin{equation}\label{def_Psid}
\Psi:=\bigg\{\psi\in{C}^{1}(\mathbb{R}^{d};\mathbb{R}^{d})~\big|~\nabla\psi+(\nabla\psi)^{T}=0~\bigg\}
\end{equation}
be the linear space of rigid displacement in $\mathbb{R}^{d}$. With $e_{1},\cdots,e_{d}$ denoting the standard basis of $\mathbb{R}^{d}$, $$\left\{~e_{i},~x_{j}e_{k}-x_{k}e_{j}~\big|~1\leq\,i\leq\,d,~1\leq\,j<k\leq\,d~\right\}$$ is a basis of $\Psi$. Denote the basis of $\Psi$ as $\{\psi^{\alpha}\}$, $\alpha=1,2,\cdots,\frac{d(d+1)}{2}$. Consider
\begin{equation}\label{mainequationd4}
\begin{cases}
\mathcal{L}_{\lambda,\mu}u:=\nabla\cdot\left(\mathbb{C}^{0}e(u)\right)=0,&\mbox{in}~\widetilde{\Omega},\\
u\big|_{+}=u\big|_{-},&\mbox{on}~\partial{D}_{1}\cup\partial{D}_{2},\\
e(u)=0,&\mbox{in}~D_{1}\cup{D}_{2},\\
\int_{\partial{D}_{i}}\frac{\partial{u}}{\partial\nu_{0}}\big|_{+}\cdot\psi^{\alpha}=0,&i=1,2,~~\alpha=1,2,\cdots,\frac{d(d+1)}{2},\\
u=\varphi,&\mbox{on}~\partial{\Omega}.
\end{cases}
\end{equation}
Then we have

\begin{theorem}\label{mainthm2}
Assume as above, and $\varphi\in{C}^{2}(\partial\Omega;\mathbb{R}^{d})$, $d\geq4$. Let $u\in{H}^{1}(\Omega;\mathbb{R}^{d})\cap{C}^{1}(\overline{\widetilde{\Omega}};\mathbb{R}^{d})$ be the solution of \eqref{mainequationd4}. Then for $0<\epsilon<1/2$, we have
\begin{equation}\label{normbound2}
\|\nabla{u}\|_{L^{\infty}(\Omega;\mathbb{R}^{d})}
\leq\frac{C}{\epsilon}\|\varphi\|_{C^{2}(\partial{\Omega};\mathbb{R}^{d})},
\end{equation}
where $C$ is a universal constant.
\end{theorem}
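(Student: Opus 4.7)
The plan is to adapt the proof of Theorem~\ref{mainthm1} step-for-step, exploiting the fact that in dimension $d\geq 4$ the singular neck integrals which forced a logarithmic factor in $d=3$ are already uniformly bounded:
$$\int_{|x'|<r_0}\frac{dx'}{\epsilon+|x'|^2}\;\sim\;\int_0^{r_0}\frac{r^{d-2}}{\epsilon+r^2}\,dr,$$
which is $O(1)$ for $d\geq 4$ but $O(|\ln\epsilon|)$ for $d=3$. Concretely, for $i=1,2$ and $\alpha=1,\dots,d(d+1)/2$, I would introduce auxiliary functions $v_i^\alpha\in C^1(\overline{\widetilde{\Omega}};\mathbb{R}^d)$ solving $\mathcal{L}_{\lambda,\mu}v_i^\alpha=0$ in $\widetilde{\Omega}$ with $v_i^\alpha=\psi^\alpha$ on $\partial D_i$ and $v_i^\alpha=0$ on $\partial D_{3-i}\cup\partial\Omega$, together with $v_0$ solving the same equation with data $\varphi$ on $\partial\Omega$ and $0$ on $\partial D_1\cup\partial D_2$. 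Since $e(u)=0$ in each $D_i$, the boundary trace of $u$ on $\partial D_i$ is a rigid motion, so $u$ decomposes as $u=\sum_{i,\alpha}C_i^\alpha v_i^\alpha+v_0$ in $\widetilde{\Omega}$ for unknown constants $C_i^\alpha$.

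Next I would set up thin-neck coordinates centered at the midpoint of $\overline{P_1P_2}$, describing $\partial D_1,\partial D_2$ locally as graphs $x_d=\pm(\epsilon/2+h(x'))$ with $h(x')\sim|x'|^2$ and gap width $\delta(x')\sim\epsilon+|x'|^2$. Following the template of Theorem~\ref{mainthm1}, construct a piecewise-smooth interpolation $\bar v_i^\alpha$ of the boundary data across the neck, apply the energy/Korn comparison to $w=v_i^\alpha-\bar v_i^\alpha$, and deduce the pointwise bounds
$$|\nabla v_i^\alpha(x)|\leq\frac{C}{\delta(x')}\quad\text{(transverse translations)},\qquad|\nabla v_i^\alpha(x)|\leq\frac{C|x'|}{\delta(x')}+C\quad\text{(other rigid modes)},$$
inside the neck, with $|\nabla v_i^\alpha|\leq C$ away from it. A standard interior/boundary regularity argument yields $\|\nabla v_0\|_{L^\infty(\widetilde{\Omega})}\leq C\|\varphi\|_{C^2(\partial\Omega)}$.

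The constants $C_i^\alpha$ are pinned down by the equilibrium conditions in \eqref{mainequationd4}. Testing \eqref{equ4-1} on $\widetilde{\Omega}$ against each $v_j^\beta$ yields a $d(d+1)\times d(d+1)$ linear system
$$\sum_{i,\alpha}a_{ji}^{\beta\alpha}\,C_i^\alpha=b_j^\beta,\qquad a_{ji}^{\beta\alpha}:=\int_{\widetilde{\Omega}}\bigl(\mathbb{C}^0 e(v_i^\alpha),e(v_j^\beta)\bigr)\,dx.$$
I would extract the leading asymptotics of each $a_{ji}^{\beta\alpha}$ and $b_j^\beta$ by splitting into a neck contribution and a uniformly bounded far-field piece; with the pointwise bounds above and the approximate partition-of-unity $v_1^\alpha+v_2^\alpha\approx\psi^\alpha$ in the neck (so $e(v_1^\alpha)\approx-e(v_2^\alpha)$ there), every surviving neck integral reduces to one of $\int_0^{r_0}r^{d-2+2p}(\epsilon+r^2)^{-q}\,dr$ with exponents making it $O(1)$ when $d\geq 4$. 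Consequently matrix and right-hand side are uniformly bounded, and after verifying uniform invertibility one obtains $\sum_{i,\alpha}|C_i^\alpha|+\sum_\alpha|C_1^\alpha-C_2^\alpha|\leq C\|\varphi\|_{C^2(\partial\Omega)}$. There is no $|\ln\epsilon|^{-1}$ improvement available, which is exactly why \eqref{normbound2} reads $1/\epsilon$ rather than $1/(\epsilon|\ln\epsilon|)$.

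To finish, write $\nabla u=\sum_\alpha(C_1^\alpha-C_2^\alpha)\nabla v_1^\alpha+\sum_\alpha C_2^\alpha\nabla(v_1^\alpha+v_2^\alpha)+\nabla v_0$; the middle term is $O(1)$ in the neck because $v_1^\alpha+v_2^\alpha\approx\psi^\alpha$ there, so combining with the bounds above gives $|\nabla u(x)|\leq C\|\varphi\|_{C^2}/(\epsilon+|x'|^2)\leq C\|\varphi\|_{C^2}/\epsilon$ in the neck, and the universal bound holds elsewhere. The step I expect to be hardest is the invertibility of the coefficient matrix: in dimension three the $|\ln\epsilon|$ growth of its dominant entries essentially supplies diagonal dominance, but in $d\geq 4$ all entries are merely $O(1)$, so a careful block decomposition separating translational from rotational modes is needed (the generators $\psi^\alpha=x_je_k-x_ke_j$ mix the two through surface integrals on $\partial D_i$), together with an explicit leading-order computation that exploits the strong convexity of $\partial D_1,\partial D_2$ and the ellipticity condition \eqref{coeff4_strongelyconvex}.
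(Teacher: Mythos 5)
Your local gradient estimates in the neck (the analogues of Proposition~\ref{prop5}) are exactly what the paper does, and you have correctly identified the dimensional mechanism (the integral $\int r^{d-2}(\epsilon+r^2)^{-1}dr$ is $O(1)$ for $d\geq 4$ but $O(|\ln\epsilon|)$ for $d=3$) that eliminates the logarithm. However, your route to $|C_i^\alpha|\leq C$ is genuinely different from the paper's, and it is the harder route. You propose to extract these bounds by inverting the $d(d+1)\times d(d+1)$ Gram system, and you correctly flag uniform invertibility as the delicate step when every entry is merely $O(1)$. The paper sidesteps this entirely: it derives $|C_i^\alpha|\leq C$ from the energy bound $\|u\|_{H^1(\widetilde{\Omega})}\leq C$, the Sobolev trace embedding on a portion of $\partial D_i$ away from the neck, and the linear independence of $\{\psi^\alpha\}$ there (Lemma~\ref{lem_xi}); this is the argument in the proof of \eqref{C1C2bound}, and it works identically in all dimensions. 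In $d\geq 4$ the linear system and Lemma~\ref{lem_matrix} are not invoked at all. Correspondingly, your final decomposition $\nabla u=\sum(C_1^\alpha-C_2^\alpha)\nabla v_1^\alpha+\sum C_2^\alpha\nabla(v_1^\alpha+v_2^\alpha)+\nabla v_0$ is unnecessary in $d\geq 4$: with $|C_i^\alpha|\leq C$ and $|\nabla v_i^\alpha|\leq C/(\epsilon+|x'|^2)$ one gets $|\nabla u|\leq C/\epsilon$ directly from \eqref{nablau4}, without exploiting any cancellation between $v_1^\alpha$ and $v_2^\alpha$. Your plan is not wrong in principle (uniform positive definiteness of the full Gram matrix should hold, since $\sum_\alpha\eta^\alpha(v_1^\alpha+v_2^\alpha)$ has nontrivial limiting energy), but it would cost you a nontrivial lemma that the paper's proof shows to be unnecessary.
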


\begin{remark}\label{rem5}
The proof of Theorem \ref{mainthm2} actually gives the following stronger estimate in dimension $d\geq4$:
\begin{equation*}\label{mainestimates4}
|\nabla{u}(x)|\leq
\begin{cases}
\dfrac{C}{\epsilon+\mathrm{dist}^{2}(x,\overline{P_{1}P_{2}})}
\|\varphi\|_{C^{2}(\partial{\Omega};\mathbb{R}^{d})},
&\,x\in\widetilde{\Omega},\\\\
C\|\varphi\|_{C^{2}(\partial{\Omega};\mathbb{R}^{d})},&\,x\in{D}_{1}\cup{D}_{2}.
\end{cases}
\end{equation*}
We also have Remarks \ref{rem2}--\ref{rem4} accordingly.
\end{remark}

The rest of this paper is organized as follows.
In Section \ref{sec_outline_of_mainthm1}, we first introduce a setup for the
proof of Theorem \ref{mainthm1}. Then
we  state a proposition,
Proposition \ref{prop_gradient},
 containing  key estimates, and
deduce  Theorem \ref{mainthm1} from the proposition.
In Sections \ref{sec_gradient123} and \ref{sec_C1C2}, we prove Proposition \ref{prop_gradient}. The proof of Theorem \ref{mainthm2} is given in Section \ref{sec_higherdimensions}.
A linear algebra lemma, Lemma \ref{lem_matrix}, used in the proof of Theorem \ref{mainthm1}, is given in Section \ref{sec_appendix}.

\section{Outline of the Proof of Theorem \ref{mainthm1}}\label{sec_outline_of_mainthm1}

The proof of Theorem \ref{mainthm1} makes use of the following decomposition. By the third line of \eqref{mainequation}, $u$ is a linear combination of $\{\psi^{\alpha}\}$ in $D_{1}$ and $D_{2}$, respectively. Since it is clear that $\mathcal{L}_{\lambda,\mu}\xi=0$ in $\widetilde{\Omega}$ and $\xi=0$ on $\partial\widetilde{\Omega}$ imply that $\xi=0$ in $\widetilde{\Omega}$, we decompose the solution of \eqref{mainequation}, as in \cite{bll},  as follows:
\begin{equation}\label{decom_u}
u=
\begin{cases}
\sum\limits_{\alpha=1}^{6}C_{1}^{\alpha}\psi^{\alpha},&\mbox{in}~\overline{D}_{1},\\
\sum\limits_{\alpha=1}^{6}C_{2}^{\alpha}\psi^{\alpha},&\mbox{in}~\overline{D}_{2},\\
\sum\limits_{\alpha=1}^{6}C_{1}^{\alpha}v_{1}^{\alpha}+\sum\limits_{\alpha=1}^{6}C_{2}^{\alpha}v_{2}^{\alpha}+v_{0},&\mbox{in}~\widetilde{\Omega},
\end{cases}
\end{equation}
where $v_{i}^{\alpha}\in{C}^{1}(\overline{\widetilde{\Omega}};\mathbb{R}^{3})$, $i=1,2$, $\alpha=1,2,\cdots,6$,  and $v_{0}\in{C}^{1}(\overline{\widetilde{\Omega}};\mathbb{R}^{3})$ are respectively the solution of
\begin{equation}\label{v1alpha}
\begin{cases}
\mathcal{L}_{\lambda,\mu}v_{i}^{\alpha}=0,&\mbox{in}~\widetilde{\Omega},\\
v_{i}^{\alpha}=\psi^{\alpha},&\mbox{on}~\partial{D}_{i},\\
v_{i}^{\alpha}=0,&\mbox{on}~\partial{D}_{j}\cup\partial{\Omega},~j\neq\,i,
\end{cases}
\end{equation}
and
\begin{equation}\label{v3}
\begin{cases}
\mathcal{L}_{\lambda,\mu}v_{0}=0,&\mbox{in}~\widetilde{\Omega},\\
v_{0}=0,&\mbox{on}~\partial{D}_{1}\cup\partial{D}_{2},\\
v_{0}=\varphi,&\mbox{on}~\partial{\Omega}.
\end{cases}
\end{equation}
The constants $C_{i}^{\alpha}:=C_{i}^{\alpha}(\epsilon)$, $i=1,2$, $\alpha=1,2,\cdots,6$, are uniquely determined by $u$.

By the decomposition \eqref{decom_u}, we write
\begin{equation}\label{nablau_dec}
\nabla{u}=\sum_{\alpha=1}^{3}\left(C_{1}^{\alpha}-C_{2}^{\alpha}\right)\nabla{v}_{1}^{\alpha}
+\sum_{\alpha=1}^{3}C_{2}^{\alpha}\nabla(v_{1}^{\alpha}+v_{2}^{\alpha})
+\sum_{i=1}^{2}\sum_{\alpha=4}^{6}C_{i}^{\alpha}\nabla{v}_{i}^{\alpha}+\nabla{v}_{0},\quad\mbox{in}~\widetilde{\Omega},
\end{equation}
then
\begin{equation}\label{nablau}
\left|\nabla{u}\right|
\leq\sum_{\alpha=1}^{3}\left|C_{1}^{\alpha}-C_{2}^{\alpha}\right|\left|\nabla{v}_{1}^{\alpha}\right|
+\sum_{\alpha=1}^{3}\left|C_{2}^{\alpha}\right|\left|\nabla(v_{1}^{\alpha}+v_{2}^{\alpha})\right|
+\sum_{i=1}^{2}\sum_{\alpha=4}^{6}\left|C_{i}^{\alpha}\right|\left|\nabla{v}_{i}^{\alpha}\right|+\big|\nabla{v}_{0}\big|,\quad\mbox{in}~\widetilde{\Omega}.
\end{equation}
The proof of Theorem \ref{mainthm1} can be reduced to the following proposition. Without loss of generality, we only need to prove Theorem \ref{mainthm1} for $\|\varphi\|_{C^{2}(\partial\Omega)}=1$, and for the general case by considering $u/\|\varphi\|_{C^{2}(\partial\Omega)}$ if $\|\varphi\|_{C^{2}(\partial\Omega)}>0$. If $\varphi\big|_{\partial\Omega}=0$, then $u=0$.

\begin{prop}\label{prop_gradient}
Under the hypotheses of Theorem \ref{mainthm1}, and the normalization $\|\varphi\|_{C^{2}(\partial\Omega)}=1$, let $v_{i}^{\alpha}$ and $v_{0}$ be the solution to \eqref{v1alpha} and \eqref{v3}, respectively. Then for $0<\epsilon<1/2$, we have
\begin{align}
&\big\|\nabla{v}_{0}\big\|_{L^{\infty}(\widetilde{\Omega})}\leq\,C;\label{mainev0}\\
&\big\|\nabla(v_{1}^{\alpha}+v_{2}^{\alpha})\big\|_{L^{\infty}(\widetilde{\Omega})}
\leq\,C,\quad\alpha=1,2,3;\label{mainev1+23}\\
&\big|\nabla{v}_{i}^{\alpha}(x)\big|
\leq\frac{C}{\epsilon+\mathrm{dist}^{2}(x,\overline{P_{1}P_{2}})},\quad\,i=1,2,~~
\alpha=1,2,3,~~x\in\widetilde{\Omega};\label{mainev1}\\
&\big|\nabla{v}_{i}^{\alpha}(x)\big|
\leq\,\frac{C\mathrm{dist}(x,\overline{P_{1}P_{2}})}{\epsilon+\mathrm{dist}^{2}(x,\overline{P_{1}P_{2}})}+C,\quad\,i=1,2,
~~\alpha=4,5,6,~~x\in\widetilde{\Omega};\label{mainevi3}
\end{align}
and
\begin{equation}\label{maineC}
\big|C_{i}^{\alpha}\big|\leq\,C,\quad
i=1,2,~\alpha=1,2,\cdots,6;
\end{equation}
\begin{equation}\label{maineC1-C2}
\big|C_{1}^{\alpha}-C_{2}^{\alpha}\big|\leq\,\frac{C}{|\ln\epsilon|},\quad
\alpha=1,2,3.
\end{equation}
\end{prop}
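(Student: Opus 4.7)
The plan is to follow the decomposition strategy developed for the two-dimensional case in \cite{bll}, with new refinements needed to capture the logarithmic improvement $|\ln\epsilon|^{-1}$ in dimension three. I would first construct explicit auxiliary vector fields $\bar v_i^\alpha$ in a neighborhood of the segment $\overline{P_{1}P_{2}}$. Writing $\partial D_1$ and $\partial D_2$ locally as graphs $x_3=\pm(\epsilon/2+h_i(x'))$ with $h_i(x')\sim |x'|^2/2$, set $\delta(x'):=\epsilon+h_1(x')+h_2(x')$ and linearly interpolate the boundary data $\psi^\alpha$ in $x_3$ across the gap, extending by a fixed cutoff outside. Direct computation gives $|\nabla\bar v_i^\alpha|\le C/\delta(x')$ for $\alpha\in\{1,2,3\}$ and $|\nabla\bar v_i^\alpha|\le C\,\mathrm{dist}(x,\overline{P_{1}P_{2}})/\delta(x')+C$ for $\alpha\in\{4,5,6\}$, matching the right-hand sides of \eqref{mainev1} and \eqref{mainevi3}; for $\alpha\in\{1,2,3\}$ the sum $\bar v_1^\alpha+\bar v_2^\alpha$ has bounded gradient because $\psi^\alpha$ is a constant vector, and the analogous construction for $v_0$ uses vanishing boundary data on $\partial D_1\cup\partial D_2$.

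To upgrade these heuristics to $v_i^\alpha$, $v_0$ and $v_1^\alpha+v_2^\alpha$, I would write $v_i^\alpha=\bar v_i^\alpha+w_i^\alpha$, where $w_i^\alpha$ vanishes on $\partial D_1\cup\partial D_2\cup\partial\Omega$ and $\mathcal{L}_{\lambda,\mu}w_i^\alpha=-\mathcal{L}_{\lambda,\mu}\bar v_i^\alpha$. A Caccioppoli-type energy estimate, combined with a De Giorgi / Moser iteration on thin cylinders fitting inside the gap, then transfers the $L^2$ information on $e(\bar v_i^\alpha)$ into pointwise $L^\infty$ control of $\nabla w_i^\alpha$ of lower order than $\nabla\bar v_i^\alpha$, following the strategy of \cite{bly1,bll}. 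This yields \eqref{mainev0}, \eqref{mainev1+23}, \eqref{mainev1} and \eqref{mainevi3}.

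For \eqref{maineC} and \eqref{maineC1-C2}, I would test \eqref{mainequation} against each $v_i^\beta$. Using \eqref{equ4-1}, the equation $\mathcal{L}_{\lambda,\mu}v_i^\beta=0$ in $\widetilde\Omega$, and the boundary behavior of the $v_i^\beta$, one obtains a $12\times 12$ linear system
\begin{equation*}
\sum_{j=1}^{2}\sum_{\alpha=1}^{6}a_{ij}^{\alpha\beta}\,C_j^\alpha=b_i^\beta,\qquad a_{ij}^{\alpha\beta}:=\int_{\widetilde\Omega}\left(\mathbb{C}^0 e(v_j^\alpha),\,e(v_i^\beta)\right)dx,
\end{equation*}
whose right-hand side $b_i^\beta=-\int_{\partial D_i}(\partial v_0/\partial\nu_0)|_+\cdot\psi^\beta$ is uniformly bounded via \eqref{mainev0}. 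Using the pointwise estimates already obtained and direct integration in the narrow gap, one verifies that $a_{ii}^{\alpha\alpha}$ and $-a_{12}^{\alpha\alpha}$ both grow like $|\ln\epsilon|$ for $\alpha\in\{1,2,3\}$, and that the cancellation $a_{ii}^{\alpha\alpha}+a_{12}^{\alpha\alpha}=O(1)$ holds (a consequence of \eqref{mainev1+23}), while all remaining entries are $O(1)$. Feeding this block structure into the linear algebra Lemma \ref{lem_matrix} from the appendix yields \eqref{maineC} and the sharper bound \eqref{maineC1-C2}.

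The main obstacle will be the quantitative expansion of the matrix entries $a_{ij}^{\alpha\beta}$ for $\alpha,\beta\in\{1,2,3\}$ to the precision needed: extracting the leading $|\ln\epsilon|$ singularity and simultaneously proving the cancellation $a_{ii}^{\alpha\alpha}+a_{12}^{\alpha\alpha}=O(1)$ requires improvements of the comparison $v_i^\alpha\approx\bar v_i^\alpha$ beyond pointwise $L^\infty$ gradient bounds, essentially because the two terms to be cancelled are each logarithmically large. This is where the iterated refined estimates alluded to in the introduction become essential, and where the three-dimensional proof genuinely departs from the two-dimensional argument of \cite{bll}.
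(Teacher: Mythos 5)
Your construction of the auxiliary fields $\bar v_i^\alpha = \bar u\,\psi^\alpha$ or $\underline u\,\psi^\alpha$ and the energy/iteration scheme to bound $\nabla(v_i^\alpha - \bar v_i^\alpha)$ match the paper's Sections 3.1--3.2 (the paper iterates a Caccioppoli inequality on thin cylinders and then rescales to apply $W^{2,p}$ elliptic estimates rather than De Giorgi/Moser, but the spirit is the same). Your derivation of the linear system from the flux conditions via \eqref{equ4-1} is also the paper's. However, the last part of your plan diverges from the paper and underestimates one step while overestimating another.

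First, the paper does \emph{not} extract \eqref{maineC} from the linear system. Instead it proves $|C_i^\alpha|\le C$ directly from the variational characterization: since $u_\epsilon$ minimizes $I_\infty$, $\|u_\epsilon\|_{H^1(\widetilde\Omega)}\le C$, hence by the Sobolev trace embedding $\|u_\epsilon\|_{L^2(\partial D_1)}\le C$; but $u_\epsilon=\sum_\alpha C_1^\alpha\psi^\alpha$ on $\partial D_1$, and a compactness argument (using Lemma \ref{lem_xi}) gives a uniform lower bound for $\|\sum_\alpha \hat C_1^\alpha\psi^\alpha\|_{L^2(\partial D_1)}$ over unit vectors $\hat C_1$. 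This is simpler and more robust than inverting the full $12\times12$ matrix, whose $3\times3$ blocks indexed by $\alpha,\beta\in\{1,2,3\}$ are all of size $|\ln\epsilon|$ with delicate sign structure. Crucially, this a~priori bound on $C_2$ is then \emph{used} to bound the right-hand side of the reduced $6\times6$ system $a_{11}(C_1-C_2)=p$ with $p=b_1-(a_{11}+a_{21})C_2$, so the order of the two arguments matters.

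Second, your worry that proving $a_{11}^{\alpha\beta}+a_{21}^{\alpha\beta}=O(1)$ ``requires improvements of the comparison $v_i^\alpha\approx\bar v_i^\alpha$ beyond pointwise $L^\infty$ gradient bounds'' is unfounded. Write $a_{11}^{\alpha\beta}+a_{21}^{\alpha\beta}=\int_{\widetilde\Omega}\left(\mathbb{C}^0 e(v_1^\alpha+v_2^\alpha),e(v_1^\beta)\right)dx$ and pair the $L^\infty$ bound \eqref{mainev1+23} with the observation that the pointwise bound \eqref{mainev1} is integrable in three dimensions: $\int_{\Omega_R}(\epsilon+|x'|^2)^{-1}dx\le C$, so $\|\nabla v_1^\beta\|_{L^1(\widetilde\Omega)}\le C$. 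The cancellation then follows from an elementary $L^\infty\cdot L^1$ bilinear estimate; no expansion of individual entries to precision $O(1)$ is needed. Finally, Lemma \ref{lem_matrix} is applied not to the $12\times12$ system but to the $6\times6$ matrix $a_{11}$ viewed as a $3+3$ block matrix (the $\alpha,\beta\le3$ block of order $|\ln\epsilon|$ and the remaining blocks of order $1$), giving $\|a_{11}^{-1}\|_{\text{top-left block}}=O(|\ln\epsilon|^{-1})$ and hence \eqref{maineC1-C2}.
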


\begin{proof}[Proof of Theorem \ref{mainthm1} by using Proposition \ref{prop_gradient}] Clearly, we only need to prove the theorem under the normalization $\|\varphi\|_{C^{2}(\partial\Omega)}=1$.

Since
$$\nabla{u}=
\begin{pmatrix}
0&C_{i}^{4}&C_{i}^{5}~\\\\
-C_{i}^{4}&0&C_{i}^{6}\\\\
-C_{i}^{5}&-C_{i}^{6}&0
\end{pmatrix},\quad\mbox{in}~D_{i},\quad\,i=1,2,$$
estimate \eqref{mainestimatesD1D2} follows from \eqref{maineC}.

By \eqref{nablau} and Proposition \ref{prop_gradient}, we have
\begin{align}\label{nablau_final}
\big|\nabla{u}(x)\big|&\leq\sum_{\alpha=1}^{3}\left|C_{1}^{\alpha}-C_{2}^{\alpha}\right|\left|\nabla{v}_{1}^{\alpha}(x)\right|
+\sum_{i=1}^{2}\sum_{\alpha=4}^{6}\left|C_{i}^{\alpha}\right|\left|\nabla{v}_{i}^{\alpha}\right|+C\nonumber\\
&\leq\frac{C}{|\ln\epsilon|\left(\epsilon+\mathrm{dist}^{2}(x,\overline{P_{1}P_{2}})\right)}
+\frac{C\mathrm{dist}(x,\overline{P_{1}P_{2}})}{\epsilon+\mathrm{dist}^{2}(x,\overline{P_{1}P_{2}})}.
\end{align}
Theorem \ref{mainthm1} follows immediately.
\end{proof}

To complete this section, we recall some properties of the tensor $\mathbb{C}$. For the isotropic elastic material, let
\begin{equation}\label{coeff_C}
\mathbb{C}:=(C_{ij\,kl})=\left(\lambda\delta_{ij}\delta_{kl}+\mu\left(\delta_{ik}\delta_{jl}+\delta_{il}\delta_{jk}\right)\right),
\quad\,\mu>0,\quad\,d\lambda+2\mu>0.
\end{equation}
The components $C_{ij\,kl}$ satisfy the following symmetric condition:
\begin{equation}\label{coeff1}
C_{ij\,kl}=C_{kl\,ij}=C_{klj\,i},\quad\,i,j,k,l=1,2,\cdots,d.
\end{equation}
We will use the following notations:
$$(\mathbb{C}A)_{ij}=\sum_{k,l=1}^{d}C_{ij\,kl}A_{kl},\quad\mbox{and}\quad
(A,B):=A:B=\sum_{i,j=1}^{d}A_{ij}B_{ij},$$
for every pair of $d\times{d}$ matrices $A=(A_{ij}),B=(B_{ij})$.
By the symmetric condition \eqref{coeff1}, we have
\begin{equation}\label{coeff3}
(\mathbb{C}A,B)=(A,\mathbb{C}B),
\end{equation}
$$(\mathbb{C}A,B)=(\mathbb{C}A^{T},B)=(\mathbb{C}A,C)=(\mathbb{C}A^{T},C).$$
For an arbitrary $d\times{d}$ real symmetric matrix $\eta=(\eta_{ij})$, we have
$$C_{ij\,kl}\,\eta_{kl}\eta_{ij}=\lambda\,\eta_{ii}\eta_{kk}+2\mu\,\eta_{kj}\eta_{kj}.$$
It follows from \eqref{coeff_C} that $\mathbb{C}$ satisfies the ellipticity condition
\begin{equation}\label{coeff2}
\min\Big\{2\mu,d\lambda+2\mu\Big\}|\eta|^{2}\leq\,C_{ij\,kl}\,\eta_{kl}\eta_{ij}\leq\,\max\Big\{2\mu,d\lambda+2\mu\Big\}|\eta|^{2},
\end{equation}
where $|\eta|^{2}=\sum\limits_{i,j=1}^{d}\eta_{ij}^{2}$. In particular,
\begin{equation}\label{coeff5}
\min\Big\{2\mu,d\lambda+2\mu\Big\}\left|A+A^{T}\right|^{2}\leq\left(\mathbb{C}\left(A+A^{T}\right),\left(A+A^{T}\right)\right).
\end{equation}

\section{Estimates of $|\nabla{v}_{0}|$, $|\nabla{v}_{i}^{\alpha}|$, and $|\nabla({v}_{1}^{\alpha}+{v}_{2}^{\alpha})|$ }\label{sec_gradient123}

We first fix notations. Use $(x_{1},x_{2},x_{3})$ to denote a point in $\mathbb{R}^{3}$ and $x'=(x_{1},x_{2})$. By a translation and rotation if necessary, we may assume without loss of generality that the points $P_{1}$ and $P_{2}$ in \eqref{P1P2} satisfy
$$P_{1}=\left(0',\frac{\epsilon}{2}\right)\in\partial{D}_{1},\quad\mbox{and}\quad\,P_{2}=\left(0',-\frac{\epsilon}{2}\right)\in\partial{D}_{2}.$$
Fix a small universal constant $R$, such that the portion of $\partial{D}_{1}$ and  $\partial{D}_{2}$ near $P_{1}$ and $P_{2}$, respectively, can be represented by
\begin{equation}\label{x3}
x_{3}=\frac{\epsilon}{2}+h_{1}(x'),\quad\mbox{and}\quad\,x_{3}=-\frac{\epsilon}{2}+h_{2}(x'), \quad\mbox{for}~~|x'|<2R.
\end{equation}
Then by the smoothness assumptions on $\partial{D}_{1}$ and $\partial{D}_{2}$, the functions $h_{1}(x')$ and $h_{2}(x')$ are of class $C^{2,\gamma}(B_{R}(0'))$,
satisfying
$$\frac{\epsilon}{2}+h_{1}(x')>-\frac{\epsilon}{2}+h_{2}(x'),\quad\mbox{for}~~|x'|<2R,$$
\begin{equation}\label{h1h20}
h_{1}(0')=h_{2}(0')=0,\quad\nabla{h}_{1}(0')=\nabla{h}_{2}(0')=0,
\end{equation}
\begin{equation}\label{h1h22}
\nabla^{2}h_{1}(0')\geq\kappa_{0}I,
\quad\,\nabla^{2}h_{2}(0')\leq-\kappa_{0}I,
\end{equation}
and
\begin{equation}\label{h1h2}
\|h_{1}\|_{C^{2,\gamma}(B_{2R}')}+\|h_{2}\|_{C^{2,\gamma}(B_{2R}')}\leq{C}.
\end{equation}
In particular, we only use a weaker relative strict convexity assumption of $\partial{D}_{1}$ and $\partial{D}_{2}$, that is
\begin{equation}\label{h1h23}
h_{1}(x')-h_{2}(x')\geq\kappa_{0}|x'|^{2},\quad\mbox{if}~~|x'|<2R.
\end{equation}

For $0\leq\,r\leq2R$, denote
$$\Omega_{r}:=\left\{~(x',x_{3})\in\mathbb{R}^{3}~\big|~-\frac{\epsilon}{2}+h_{2}(x')<x_{3}<\frac{\epsilon}{2}+h_{1}(x'),
~|x'|<r~\right\}.
$$
For $0\leq|z'|<R$, let
\begin{equation}\label{hatomega}
\widehat{\Omega}_{s}(z'):=\left\{~(x',x_{3})\in\mathbb{R}^{3}~\big|~-\frac{\epsilon}{2}+h_{2}(x')<x_{3}<\frac{\epsilon}{2}+h_{1}(x'),
~|x'-z'|<s~\right\}.
\end{equation}

\subsection{Estimates of $|\nabla{v}_{0}|$, $|\nabla{v}_{i}^{\alpha}|$ for $\alpha=1,2,3$, and $|\nabla(v_{1}^{\alpha}+v_{2}^{\alpha})|$}

\begin{lemma}\label{lem_v3_v1+v2}
\begin{equation}\label{estimate_v3}
\|{v}_{0}\|_{L^{\infty}(\widetilde{\Omega})}+\|\nabla{v}_{0}\|_{L^{\infty}(\widetilde{\Omega})}\leq\,C.
\end{equation}
\begin{equation}\label{estimate_v1+v2}
\|v_{1}^{\alpha}+v_{2}^{\alpha}\|_{L^{\infty}(\widetilde{\Omega})}
+\|\nabla(v_{1}^{\alpha}+v_{2}^{\alpha})\|_{L^{\infty}(\widetilde{\Omega})}\leq\,C,\quad\alpha=1,2,\cdots,6.
\end{equation}
\end{lemma}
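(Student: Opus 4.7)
Both estimates reduce to a single template. Since each $\psi^{\alpha}$ is a rigid motion, $\mathcal{L}_{\lambda,\mu}\psi^{\alpha}=0$, so I fix a cutoff $\eta\in C^{\infty}_{c}(\Omega;[0,1])$ with $\eta\equiv 1$ in a neighborhood of $\overline{D_{1}\cup D_{2}}$ (in particular on $\Omega_{R}$) and $\eta\equiv 0$ near $\partial\Omega$. Then $w:=v_{1}^{\alpha}+v_{2}^{\alpha}-\eta\psi^{\alpha}$ satisfies $\mathcal{L}_{\lambda,\mu}w=-\mathcal{L}_{\lambda,\mu}(\eta\psi^{\alpha})=:f$ in $\widetilde\Omega$, $w=0$ on $\partial\widetilde\Omega$, with $\|f\|_{L^{\infty}}\leq C$ and $\mathrm{supp}\,f$ at a universal positive distance from $\partial D_{1}\cup\partial D_{2}$. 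The function $v_{0}$ fits the same template with $f\equiv 0$ and $w=\varphi$ on $\partial\Omega$. It therefore suffices to prove that any such $w$ satisfies $\|w\|_{L^{\infty}(\widetilde\Omega)}+\|\nabla w\|_{L^{\infty}(\widetilde\Omega)}\leq C$ uniformly in $\epsilon$.

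The $L^{\infty}$ bound is standard: testing the weak formulation against $w$ minus a smooth extension of its boundary data (supported away from $\partial D_{1}\cup\partial D_{2}$), using the ellipticity \eqref{coeff5} and Korn's inequality, yields $\|w\|_{H^{1}(\widetilde\Omega)}\leq C$, and $\|w\|_{L^{\infty}(\widetilde\Omega)}\leq C$ then follows from the standard $L^{\infty}$ estimate for weak solutions of uniformly elliptic systems with bounded data.

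For the gradient bound, split $\widetilde\Omega=(\widetilde\Omega\setminus\Omega_{R})\cup\Omega_{R}$. On $\widetilde\Omega\setminus\Omega_{R}$ the three boundary components $\partial D_{1}$, $\partial D_{2}$, $\partial\Omega$ are pairwise separated by universal positive distances and the geometry is uniformly $C^{2,\gamma}$, so standard interior and boundary Schauder estimates deliver $\|\nabla w\|_{L^{\infty}(\widetilde\Omega\setminus\Omega_{R})}\leq C$. Inside the narrow neck $\Omega_{R}$ the standard Schauder constants degenerate because the gap $h(|x'|)\sim\epsilon+|x'|^{2}$ is small. The key structural input is that $w$ vanishes on \emph{both} $\partial D_{1}\cap\Omega_{R}$ and $\partial D_{2}\cap\Omega_{R}$. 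Using this together with the already-established $\|w\|_{L^{\infty}}\leq C$, a barrier-comparison argument (or equivalently a suitable local rescaling exploiting the twofold Dirichlet vanishing) yields the pointwise bound
\[
|w(x)|\leq C\,\mathrm{dist}(x,\partial D_{1}\cup\partial D_{2}),\qquad x\in\Omega_{R},
\]
uniformly in $\epsilon$. Setting $r(x):=\mathrm{dist}(x,\partial D_{1}\cup\partial D_{2})$ and applying interior Schauder to $w$ in the ball $B_{r(x)/2}(x)$, which lies in $\widetilde\Omega$ and touches neither $\partial D_{1}$ nor $\partial D_{2}$, then gives
\[
|\nabla w(x)|\leq \frac{C}{r(x)}\,\|w\|_{L^{\infty}(B_{r(x)/2}(x))}\leq \frac{C}{r(x)}\cdot C\,r(x)=C,
\]
uniformly in $x\in\Omega_{R}$ and in $\epsilon$. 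A boundary Schauder estimate at $\partial D_{1}$ and $\partial D_{2}$ handles the residual boundary points.

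The main obstacle is precisely the linear-in-distance pointwise control of $|w|$ in the narrow neck: naive interior Schauder applied to the $L^{\infty}$ bound alone yields only $|\nabla w|\lesssim 1/r(x)$, which blows up as $r(x)\to 0$. Exploiting the twofold Dirichlet vanishing on both top and bottom of the neck -- so that $|w|$ is not merely bounded but vanishes linearly with a universal slope in the distance to the nearest Dirichlet surface -- is the nontrivial step that converts the $L^{\infty}$ bound into the desired uniform gradient bound.
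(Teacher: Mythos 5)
Your reduction is set up correctly: subtracting $\eta\psi^{\alpha}$ (with $\eta\equiv 1$ near $\overline{D_1\cup D_2}$, $\eta\equiv 0$ near $\partial\Omega$) produces a function $w$ that vanishes on all of $\partial\widetilde\Omega$, satisfies $\mathcal{L}_{\lambda,\mu}w=f$ with $f$ bounded and supported away from the neck, and $v_0$ fits the same template; and the strategy "global energy bound, then local control in the neck, then Schauder/ADN at the right scale" is indeed the standard Bao--Li--Yin line that the paper follows (it delegates this lemma to the 2D companion paper \cite{bll}, which proceeds exactly this way). However, the two steps that actually make the neck estimate work are asserted rather than proved, and the mechanisms you invoke for them do not deliver.

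First, the claim that $\|w\|_{L^{\infty}(\widetilde\Omega)}\leq C$ "follows from the standard $L^{\infty}$ estimate for weak solutions of uniformly elliptic systems with bounded data" is not available here: there is no De Giorgi--Nash--Moser maximum-type estimate for elliptic \emph{systems}, and any $W^{2,p}$--Sobolev route has constants that degenerate with the domain as the gap $2\delta(x')\sim\epsilon+|x'|^{2}$ shrinks. Away from the neck this is fine, but inside $\Omega_R$ the $L^\infty$ bound is not free.

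Second, and more seriously, the pivotal inequality $|w(x)|\leq C\,\mathrm{dist}(x,\partial D_1\cup\partial D_2)$ is the whole content of the lemma and cannot be obtained by the means you offer. A barrier--comparison argument is unavailable because the Lam\'e system has no maximum principle. The "local rescaling exploiting the twofold Dirichlet vanishing" also does not produce the linear decay from the $L^\infty$ bound alone: after the rescaling $x\mapsto(z'+\delta y',\delta y_3)$ carrying $\widehat\Omega_\delta(z')$ to $Q_1$, one has $W=0$ on top and bottom, $\mathcal{L}_{\lambda,\mu}W=0$ in $Q_1$, and $\|W\|_{L^\infty(Q_1)}\leq C$; interior and boundary regularity then give only $\|\nabla W\|_{L^\infty(Q_{1/2})}\leq C\|W\|_{L^\infty(Q_1)}\leq C$, hence $|\nabla w(z)|=\delta^{-1}|\nabla W(0)|\leq C/\delta$, which blows up. To conclude $|\nabla w|\leq C$ one needs $\|W\|_{L^\infty(Q_1)}\leq C\delta$, which is exactly the linear decay you are trying to prove, so the argument is circular. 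What is actually required is a local energy decay of the form
\begin{equation*}
\int_{\widehat\Omega_\delta(z')}|\nabla w|^2\,dx \;\leq\; C\,\delta^{\,d+1}
\end{equation*}
(here $d=3$), obtained by the Caccioppoli-plus-1D-Poincar\'e iteration in $\widehat\Omega_s(z')$ that the paper carries out for $v_i^\alpha$ in Proposition \ref{prop1} (eqs. \eqref{FsFt11}--\eqref{tildeF111_in}); since for the present $w$ the right-hand side $\mathcal{L}_{\lambda,\mu}\tilde u$ term is absent in the neck, the iteration in fact gives super-polynomial decay. Only after this local energy estimate does the rescaling plus $W^{2,p}$/Sobolev give $\|\nabla W\|_{L^\infty(Q_{1/2})}\leq C\delta$ and hence $|\nabla w|\leq C$. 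Your plan identifies the correct decomposition and correctly locates the crux, but the argument you give for the crux would not close the proof.
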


The proof of Lemma \ref{lem_v3_v1+v2} is essentially the same as in \cite{bll} for dimension two. We omit it here. By Lemma \ref{lem_v3_v1+v2}, \eqref{mainev0} and \eqref{mainev1+23} is proved. 

To estimate \eqref{mainev1}, we introduce a scalar function $\bar{u}\in{C}^{2}(\mathbb{R}^{3})$, such that $\bar{u}=1$ on
$\partial{D}_{1}$, $\bar{u}=0$ on
$\partial{D}_{2}\cup\partial\Omega$,
\begin{align}\label{ubar}
\bar{u}(x)
=\frac{x_{3}-h_{2}(x')+\frac{\epsilon}{2}}{\epsilon+h_{1}(x')-h_{2}(x')},\quad\mbox{in}~~\Omega_{2R},
\end{align}
and
\begin{equation}\label{nablau_bar_outside}
\|\bar{u}\|_{C^{2}(\mathbb{R}^{3}\setminus\Omega_{R})}\leq\,C.
\end{equation}
Define
\begin{equation}\label{def:ubar1112}
\bar{u}_{1}^{\alpha}=\bar{u}\psi^{\alpha},\quad\alpha=1,2,3,
\quad\,\mbox{in}~~\widetilde{\Omega},
\end{equation}
then $\bar{u}_{1}^{\alpha}=v_{1}^{\alpha}$ on $\partial\widetilde{\Omega}$.

Similarly, we define
\begin{equation}\label{def:ubar2122}
\bar{u}_{2}^{\alpha}=\underline{u}\psi^{\alpha},\quad\alpha=1,2,3,
\quad\,\mbox{in}~~\widetilde{\Omega},
\end{equation}
such that $\bar{u}_{2}^{\alpha}=v_{2}^{\alpha}$ on $\partial\widetilde{\Omega}$, where $\underline{u}$ is a scalar function in ${C}^{2}(\mathbb{R}^{3})$ satisfying $\underline{u}=1$ on
$\partial{D}_{2}$, $\underline{u}=0$ on
$\partial{D}_{1}\cup\partial\Omega$,
\begin{align}\label{u_underline}
\underline{u}(x)
=\frac{-x_{3}+h_{1}(x')+\frac{\epsilon}{2}}{\epsilon+h_{1}(x')-h_{2}(x')},\quad\mbox{in}~~\Omega_{2R},
\end{align}
and
\begin{equation}\label{nablau_underline_outside}
\|\underline{u}\|_{C^{2}(\mathbb{R}^{3}\setminus\Omega_{R})}\leq\,C.
\end{equation}

In order to prove \eqref{mainev1}, it suffices to prove the following proposition.

\begin{prop}\label{prop1}
Assume the above, let $v_{i}^{\alpha}\in{H}^1(\widetilde{\Omega};\mathbb{R}^{3})$ be the
weak solution of \eqref{v1alpha} with $\alpha=1,2,3$. Then for $i=1,2,~\alpha=1,2,3$, 
\begin{equation}\label{energy_w}
\int_{\widetilde{\Omega}}\left|\nabla(v_{i}^{\alpha}-\bar{u}_{i}^{\alpha})\right|^{2}dx\leq\,C;
\end{equation}
and 
\begin{equation}\label{nablavialpha_outomega1/2}
\left\|\nabla{v}_{i}^{\alpha}\right\|_{L^{\infty}(\widetilde{\Omega}\setminus\Omega_{R})}\leq\,C,
\end{equation}
\begin{equation}\label{nabla_w_ialpha}
\left|\nabla(v_{i}^{\alpha}-\bar{u}_{i}^{\alpha})(x)\right|\leq
\begin{cases}
\displaystyle \frac{C}{\sqrt{\epsilon}},&|x'|\leq\sqrt{\epsilon},\\
\displaystyle \frac{C}{|x'|},&\sqrt{\epsilon}<|x'|\leq\,R,
\end{cases}\quad\forall~~x\in\Omega_{R}.
\end{equation}
Consequently,
\begin{equation}\label{nablav1}
\left|\nabla{v}_{i}^{\alpha}(x)\right|\leq\frac{C}{\epsilon+|x'|^{2}},\qquad\quad\forall~~x\in\Omega_{R},
\end{equation}
and
\begin{equation}\label{nablax'v1}
\left|\nabla_{x'}{v}_{i}^{\alpha}(x)\right|\leq\begin{cases}
\displaystyle \frac{C}{\sqrt{\epsilon}},&|x'|\leq\sqrt{\epsilon},\\
\displaystyle \frac{C}{|x'|},&\sqrt{\epsilon}<|x'|\leq\,R.
\end{cases}
\end{equation}
\end{prop}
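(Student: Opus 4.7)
My plan is to analyze $w_i^\alpha:=v_i^\alpha-\bar u_i^\alpha$, which lies in $H_0^1(\widetilde\Omega;\mathbb{R}^3)$ and satisfies $\mathcal L_{\lambda,\mu}w_i^\alpha=-\mathcal L_{\lambda,\mu}\bar u_i^\alpha$ in $\widetilde\Omega$. The argument will have three steps: (a) a global energy estimate yielding \eqref{energy_w}; (b) the outside bound \eqref{nablavialpha_outomega1/2} by standard regularity; (c) the interior pointwise estimate \eqref{nabla_w_ialpha} by local rescaling. The remaining bounds \eqref{nablav1}--\eqref{nablax'v1} then follow from $\nabla v_i^\alpha=\nabla w_i^\alpha+\nabla\bar u_i^\alpha$ together with the explicit estimates $|\nabla\bar u|\leq C/\delta\leq C/(\epsilon+|x'|^2)$ and $|\nabla_{x'}\bar u|\leq C|x'|/\delta$, which can be read off from \eqref{ubar} and $|\nabla h_j(x')|\lesssim|x'|$, where $\delta(x'):=\epsilon+h_1(x')-h_2(x')\gtrsim\epsilon+|x'|^2$.

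For step (a) I would use $w_i^\alpha$ itself as a test function and integrate by parts to obtain
$$\int_{\widetilde\Omega}(\mathbb{C}^{0}e(w_i^\alpha),e(w_i^\alpha))\,dx=\int_{\widetilde\Omega}(\mathcal L_{\lambda,\mu}\bar u_i^\alpha)\cdot w_i^\alpha\,dx.$$
A direct computation from $\bar u_i^\alpha=\bar u\,\psi^\alpha$ (with constant $\psi^\alpha$ for $\alpha=1,2,3$) gives $|\mathcal L_{\lambda,\mu}\bar u_i^\alpha|\leq C/\delta+C|x'|/\delta^2$ inside $\Omega_R$ and boundedness outside, and hence $\|\delta\,\mathcal L_{\lambda,\mu}\bar u_i^\alpha\|_{L^2(\Omega_R)}\leq C$ by explicit integration. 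Because $w_i^\alpha$ vanishes on both $\partial D_1$ and $\partial D_2$, a one-dimensional Poincar\'e/Hardy inequality in the $x_3$-variable yields $\|w_i^\alpha/\delta\|_{L^2(\Omega_R)}\leq C\|\partial_{x_3}w_i^\alpha\|_{L^2}$. Combining via Cauchy--Schwarz with Korn's inequality on $H_0^1$, the ellipticity of $\mathbb{C}^0$, and global Poincar\'e to handle $\widetilde\Omega\setminus\Omega_R$ will give $\int|\nabla w_i^\alpha|^2\leq C$.

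For step (b), this energy bound together with the $C^2$-boundedness of $\bar u_i^\alpha$ on $\widetilde\Omega\setminus\Omega_R$ gives a universal $H^1$ bound for $v_i^\alpha$ on $\widetilde\Omega\setminus\Omega_{R/2}$; since the boundary of $\widetilde\Omega$ there is uniformly $C^{2,\gamma}$ with non-degenerate geometry, interior and boundary Schauder estimates for the Lam\'e system with smooth boundary data deliver \eqref{nablavialpha_outomega1/2}. For step (c), for $|z'|\leq R$ I would set $s=s(z'):=\tfrac12\max(\sqrt\epsilon,|z'|)$, so that $\delta\sim s^2+\epsilon\sim s^2$ throughout $\widehat\Omega_{2s}(z')$, and perform the rescaling $x'=z'+s\,y'$, $x_3=(s^2+\epsilon)\tau$, mapping $\widehat\Omega_{2s}(z')$ onto a reference domain whose top and bottom graphs have $C^{2,\gamma}$ norms bounded independently of $\epsilon$ and $z'$. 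The rescaled local energy of $\tilde w$ is controlled by step (a) and the rescaled forcing is uniformly bounded, so standard $W^{2,p}/C^{1,\gamma}$ Schauder theory on this fixed reference domain gives $\|\nabla\tilde w\|_{L^\infty}\leq C$; unscaling delivers $|\nabla w_i^\alpha(x)|\leq C/s$, which is exactly \eqref{nabla_w_ialpha}.

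The hardest step will be the rescaling in (c): the neck is genuinely anisotropic ($s$ in $x'$, $s^2$ in $x_3$), so the rescaling is not self-similar and one must rescale the components of $\tilde w$ simultaneously in order to keep the transformed Lam\'e operator uniformly elliptic with universal constants (or, alternatively, work with an anisotropic Schauder estimate on a thin slab). Verifying uniform smoothness of the reference boundary and $\epsilon$-independence of the regularity constants will be the technical crux of this part of the argument.
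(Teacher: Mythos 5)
Your steps (a) and (b) are sound and essentially match the paper's Step 1 (your use of the one--dimensional Hardy/Poincar\'e inequality $\|w/\delta\|_{L^2}\leq C\|\partial_{x_3}w\|_{L^2}$ plus $\|\delta\,\mathcal L_{\lambda,\mu}\bar u_i^\alpha\|_{L^2(\Omega_R)}\leq C$ is a clean alternative to the paper's integration by parts in the tangential variables, and gives the same global bound \eqref{energy_w}). However, your step (c) has a genuine gap, and it is where the real work of the proof lies.

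The assertion that ``the rescaled local energy of $\tilde w$ is controlled by step (a)'' is incorrect. The global bound $\int_{\widetilde\Omega}|\nabla w|^2\,dx\leq C$ only gives $\int_{\widehat\Omega_\delta(z')}|\nabla w|^2\,dx\leq C$, but for the rescaling-plus-$W^{2,p}$ argument to produce $|\nabla w(z)|\leq C/|z'|$ one needs a local energy estimate of the form $\int_{\widehat\Omega_\delta(z')}|\nabla w|^2\,dx\leq C\delta^2$ (with $\delta\sim\epsilon+|z'|^2$), which is vastly sharper. Plugging the global $O(1)$ bound into the rescaled $W^{2,p}$ inequality only yields $|\nabla w(z)|\lesssim\delta^{-3/2}$, which is far worse than the claimed $1/|z'|$. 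The paper obtains the sharp local bound by a Caccioppoli-type energy iteration (its Step 2): multiply the PDE by $w\eta^2$ with a cutoff $\eta$ at concentric scales, derive an inequality of the form $\widehat F(t)\leq(C_0|z'|^2/(s-t))^2\widehat F(s)+C(s-t)^2 s^2/|z'|^4$ for the truncated energy $\widehat F$, and iterate across $\sim 1/|z'|$ nested scales from $\delta$ up to $|z'|$. This iteration is the crux of the argument and is entirely absent from your proposal.

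Moreover, once that local estimate is available, the anisotropy problem you flag becomes a non-issue: the paper rescales \emph{isotropically} by $\delta$ in all variables ($x'-z'=\delta y'$, $x_3=\delta y_3$), so $\widehat\Omega_\delta(z')$ becomes a uniformly $C^{2,\gamma}$ reference domain of unit size and the transformed operator is the standard Lam\'e operator with universal ellipticity constants; standard interior/boundary $W^{2,p}$ estimates then apply directly. The price of this isotropic choice is precisely that the tangential patch has radius only $\delta\sim s^2$, which is why the sharp local energy estimate (and not the global one) is required. Your anisotropic rescaling ($s$ in $x'$, $\delta\sim s^2$ in $x_3$), by contrast, does produce a unit-size reference domain from a patch of radius $s$, but the pushed-forward Lam\'e operator has top-order coefficients that blow up like $1/\delta$ relative to $1/s^2$ in the vertical direction, and the rescaled energy $\int_{Q}|\nabla_y W|^2\,dy$ carries a factor of $1/s^2$ on the tangential derivatives; neither the operator nor the energy is controlled uniformly in $\epsilon$. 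So the ``anisotropic Schauder on a thin slab'' route you gesture at is not a small technicality but an unresolved obstruction, and the paper's combination (isotropic rescaling at scale $\delta$ plus the Step 2 iteration) is the mechanism that makes the argument close.
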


A direct calculation gives, in view of \eqref{h1h20}-\eqref{h1h23}, that
\begin{equation}\label{nablau_bar}
|\partial_{x_{k}}\bar{u}(x)|\leq\frac{C|x_{k}|}{\epsilon+|x'|^{2}},~~k=1,2,\qquad
|\partial_{x_{3}}\bar{u}(x)|\leq\frac{C}{\epsilon+|x'|^{2}},\quad~x\in\Omega_{R}.
\end{equation}
Thus
\begin{equation}\label{nablau_123}
|\nabla\bar{u}_{i}^{\alpha}(x)|\leq\frac{C}{\epsilon+|x'|^{2}},\quad\,i=1,2,~\alpha=1,2,3,\quad~x\in\Omega_{R}.
\end{equation}
For $k,l=1,2$,
\begin{equation}\label{nabla2u_bar}
|\partial_{x_{k}x_{l}}\bar{u}(x)|\leq\frac{C}{\epsilon+|x'|^{2}},\quad
|\partial_{x_{k}x_{3}}\bar{u}(x)|\leq\frac{C|x'|}{(\epsilon+|x'|^{2})^{2}},
\quad\partial_{x_{3}x_{3}}\bar{u}(x)=0,\quad~x\in\Omega_{R}.
\end{equation}
For $\bar{u}_{i}^{\alpha}$, defined by \eqref{def:ubar1112} and \eqref{def:ubar2122}, making use of \eqref{L_u}
 and \eqref{nabla2u_bar}, we have, for $i=1,2,~\alpha=1,2,3$,
\begin{equation}\label{L_ubar_ialpha}
\left|\mathcal{L}_{\lambda,\mu}\bar{u}_{i}^{\alpha}(x)\right|\leq\,C\sum_{k+l<6}\left|\partial_{x_{k}x_{l}}\bar{u}(x)\right|\leq\frac{C}{\epsilon+|x'|^{2}}
+\frac{C|x'|}{(\epsilon+|x'|^{2})^{2}},\quad~x\in\Omega_{R}.
\end{equation}

For $|z'|\leq\,2R$, we always use $\delta$ to denote
\begin{equation}\label{delta}
\delta:=\delta(z')=\frac{\epsilon+h_{1}(z')-h_{2}(z')}{2}.
\end{equation}
By \eqref{h1h20}-\eqref{h1h23},
\begin{equation}
\frac{1}{C}\left(\epsilon+|z'|^{2}\right)\leq\delta(z')\leq\,C\left(\epsilon+|z'|^{2}\right).
\end{equation}

\begin{proof}[Proof of Proposition \ref{prop1}]
Let
\begin{equation}\label{def_w}
w_{i}^{\alpha}:=v_{i}^{\alpha}-\bar{u}_{i}^{\alpha},\qquad\,i=1,2,~\alpha=1,2,3.
\end{equation}
For simplicity, denote
$$w:=w_{i}^{\alpha},\qquad\mbox{and}\quad\tilde{u}=\bar{u}_{i}^{\alpha},\qquad\,i=1,2,~\alpha=1,2,3.$$
The proof is divided into four steps.

\noindent{\bf STEP 1.} Proof of \eqref{energy_w} and \eqref{nablavialpha_outomega1/2}.

By \eqref{def_w} and \eqref{v1alpha},
\begin{equation}\label{w20}
\begin{cases}
\mathcal{L}_{\lambda,\mu}w=-\mathcal{L}_{\lambda,\mu}\tilde{u},&\mbox{in}~\widetilde{\Omega},\\
w=0,&\mbox{on}~\partial\widetilde{\Omega}.
\end{cases}
\end{equation}
Multiplying the equation in \eqref{w20} by $w$ and integrating by parts, we have
\begin{align}\label{integratingbyparts}
\int_{\widetilde{\Omega}}\left(\mathbb{C}^{0}e(w),e(w)\right)dx
=\int_{\widetilde{\Omega}}w\left(\mathcal{L}_{\lambda,\mu}\tilde{u}\right)dx.
\end{align}
By the Poincar\'{e} inequality,
\begin{equation}
\|w\|_{L^{2}(\widetilde{\Omega}\setminus\Omega_{R})}\leq\,C\|\nabla{w}\|_{L^{2}(\widetilde{\Omega}\setminus\Omega_{R})}.
\end{equation}
Note that the above constant $C$ is independent of $\epsilon$. By the Sobolev trace embedding theorem,
\begin{align}\label{traceembedding}
\int\limits_{\scriptstyle |x'|=R,\atop\scriptstyle
-\epsilon/2+h_{2}(x')<x_{3}<\epsilon/2+h_{1}(x')\hfill}|w|&\leq\,C\left(\int_{\widetilde{\Omega}\setminus\Omega_{R}}\left|\nabla{w}\right|^{2}dx\right)^{1/2}.
\end{align}
It follows from the first Korn's inequality, \eqref{coeff5}, \eqref{integratingbyparts} and the definition of $\tilde{u}$ that
\begin{align}\label{energy_w_1}
\int_{\widetilde{\Omega}}\left|\nabla{w}\right|^{2}dx
&\leq\,2\int_{\widetilde{\Omega}}|e(w)|^{2}dx\nonumber\\
&\leq\,C\bigg|\int_{\Omega_{R}}w\left(\mathcal{L}_{\lambda,\mu}\tilde{u}\right)dx\bigg|
+C\bigg|\int_{\widetilde{\Omega}\setminus\Omega_{R}}w\left(\mathcal{L}_{\lambda,\mu}\tilde{u}\right)dx\bigg|\nonumber\\
&\leq\,C\bigg|\int_{\Omega_{R}}w\left(\mathcal{L}_{\lambda,\mu}\tilde{u}\right)dx\bigg|
+C\int_{\widetilde{\Omega}\setminus\Omega_{R}}|w|dx\nonumber\\
&\leq\,C\bigg|\int_{\Omega_{R}}w\left(\mathcal{L}_{\lambda,\mu}\tilde{u}\right)dx\bigg|
+C\left(\int_{\widetilde{\Omega}\setminus\Omega_{R}}|\nabla{w}|^{2}dx\right)^{1/2},
\end{align}
while, using \eqref{L_u} and \eqref{traceembedding},
\begin{align}\label{energy_w_2}
\left|\int_{\Omega_{R}}w\left(\mathcal{L}_{\lambda,\mu}\tilde{u}\right)dx\right|
&\leq\,C\sum_{k+l<6}\left|\int_{\Omega_{R}}w\partial_{x_{k}x_{l}}\tilde{u}\,dx\right|\nonumber\\
&\leq\,C\int_{\Omega_{R}}|\nabla{w}|\big|\nabla_{x'}\tilde{u}\big|dx
+\int\limits_{\scriptstyle |x'|=R,\atop\scriptstyle
-\epsilon/2+h_{2}(x')<x_{3}<\epsilon/2+h_{1}(x')\hfill}C~|w|\nonumber\\
&\leq\,C\left(\int_{\Omega_{R}}|\nabla{w}|^{2}dx\right)^{1/2}
\left(\int_{\Omega_{R}}\big|\nabla_{x'}\tilde{u}\big|^{2}dx\right)^{1/2}
+C\left(\int_{\widetilde{\Omega}\setminus\Omega_{R}}\left|\nabla{w}\right|^{2}dx\right)^{1/2}.
\end{align}
Using
$$\int_{\Omega_{R}}\big|\nabla_{x'}\tilde{u}\big|^{2}dx\leq\,C,$$
we have, from the above,
$$\int_{\widetilde{\Omega}}\left|\nabla{w}\right|^{2}dx
\leq\,C\left(\int_{\widetilde{\Omega}}|\nabla{w}|^{2}dx\right)^{1/2}.$$
This estimate yields \eqref{energy_w}.

A consequence of \eqref{energy_w} and \eqref{nablau_bar_outside} is
$$\int_{\widetilde{\Omega}\setminus\Omega_{R/2}}|\nabla{v}_{i}^{\alpha}|^{2}dx\leq2\int_{\widetilde{\Omega}\setminus\Omega_{R/2}}\Big(|\nabla\bar{u}_{i}^{\alpha}|^{2}+\left|\nabla(v_{i}^{\alpha}-\bar{u}_{i}^{\alpha})\right|^{2}\Big)dx\leq\,C.$$
Applying classical elliptic estimates, we obtain \eqref{nablavialpha_outomega1/2}.

\noindent{\bf STEP 2.} Proof of
\begin{align}\label{energy_w_inomega_z1}
\int_{\widehat{\Omega}_{\delta}(z')}\left|\nabla{w}\right|^{2}dx&\leq
\begin{cases}
\displaystyle C\epsilon^{2},&0\leq|z'|\leq\sqrt{\epsilon},\\
\displaystyle C|z'|^{4},&\sqrt{\epsilon}<|z'|\leq\,R.
\end{cases}
\end{align}

For $0<t<s<R$, let $\eta$ be a smooth function satisfying $0\leq\eta(x')\leq1$, $\eta(x')=1$ if $|x'-z'|<t$, $\eta(x')=0$ if $|x'-z'|>s$, and $|\nabla\eta(x')|\leq\frac{2}{s-t}$. Multiplying the equation in \eqref{w20} by $w\eta^{2}$ and integrating by parts leads to
\begin{equation}\label{I3-1}
\int_{\widehat{\Omega}_{s}(z')}\left(\mathbb{C}^{0}e(w),e(w\eta^{2})\right)dx
=\int_{\widehat{\Omega}_{s}(z')}(w\eta^{2})\mathcal{L}_{\lambda,\mu}\tilde{u}\,dx.
\end{equation}
For the left hand side of \eqref{I3-1}, using the first Korn's inequality and some standard arguments, we have
\begin{align*}
\int_{\widehat{\Omega}_{s}(z')}\left(\mathbb{C}^{0}e(w),e(w\eta^{2})\right)dx
\geq\frac{1}{C}\int_{\widehat{\Omega}_{s}(z')}|\nabla(w\eta)|^{2}dx
-C\int_{\widehat{\Omega}_{s}(z')}|w|^{2}|\nabla\eta|^{2}dx,
\end{align*}
and for the right hand side of \eqref{I3-1},
\begin{align*}
\bigg|\int_{\widehat{\Omega}_{s}(z')}(w\eta^{2})\mathcal{L}_{\lambda,\mu}\tilde{u}\,dx~\bigg|
&\leq\,\left(\int_{\widehat{\Omega}_{s}(z')}|w|^{2}dx\right)^{1/2}\left(\int_{\widehat{\Omega}_{s}(z')}\left|\mathcal{L}_{\lambda,\mu}\tilde{u}\right|^{2}dx\right)^{1/2}\\
&\leq\,\frac{1}{(s-t)^{2}}\int_{\widehat{\Omega}_{s}(z')}|w|^{2}dx
+(s-t)^{2}\int_{\widehat{\Omega}_{s}(z')}\left|\mathcal{L}_{\lambda,\mu}\tilde{u}\right|^{2}dx.
\end{align*}
It follows that
\begin{align}\label{FsFt11}
\int_{\widehat{\Omega}_{t}(z')}|\nabla{w}|^{2}dx\leq\,\frac{C}{(s-t)^{2}}\int_{\widehat{\Omega}_{s}(z')}|w|^{2}dx
+C(s-t)^{2}\int_{\widehat{\Omega}_{s}(z')}\left|\mathcal{L}_{\lambda,\mu}\tilde{u}\right|^{2}dx.
\end{align}

{\bf Case 1.} Estimate \eqref{energy_w_inomega_z1} for $\sqrt{\epsilon}\leq|z'|\leq\,R$.

Note that for $\sqrt{\epsilon}\leq|z'|\leq\,R$, $0<t<s<\frac{2|z'|}{3}$, we have
\begin{align}\label{energy_w_square}
\int_{\widehat{\Omega}_{s}(z')}|w|^{2}dx
&=\int_{|x'-z'|\leq\,s}\int_{-\frac{\epsilon}{2}+h_{2}(x')}^{\frac{\epsilon}{2}+h_{1}(x')}
|w(x',x_{3})|^{2}dx_{3}dx'\nonumber\\
&\leq\int_{|x'-z'|\leq\,s}(\epsilon+h_{1}(x')-h_{2}(x'))^{2}\int_{-\frac{\epsilon}{2}+h_{2}(x')}^{\frac{\epsilon}{2}+h_{1}(x')}
|\partial_{x_{3}}w(x',x_{3})|^{2}dx_{3}dx'\nonumber\\
&\leq\,C|z'|^{4}\int_{\widehat{\Omega}_{s}(z')}|\nabla{w}|^{2}dx.
\end{align}
By \eqref{L_ubar_ialpha}, we have
\begin{align}\label{integal_Lubar11}
\int_{\widehat{\Omega}_{s}(z')}\left|\mathcal{L}_{\lambda,\mu}\tilde{u}\right|^{2}dx
&\leq\int_{\widehat{\Omega}_{s}(z')}\left(\frac{C}{\epsilon+|x'|^{2}}+\frac{C|x'|}{(\epsilon+|x'|^{2})^{2}}\right)^{2}dx\nonumber\\
&\leq\,C\int_{|x'-z'|<s}\left(\frac{1}{\epsilon+|x'|^{2}}+\frac{|x'|^{2}}{(\epsilon+|x'|^{2})^{3}}\right)dx'\nonumber\\
&\leq\frac{Cs^{2}}{|z'|^{4}},\qquad\,0<s<\frac{2|z'|}{3}.
\end{align}
Denote
$$\widehat{F}(t):=\int_{\widehat{\Omega}_{t}(z')}|\nabla{w}|^{2}dx.$$
It follows from \eqref{FsFt11}, \eqref{energy_w_square} and \eqref{integal_Lubar11} that
\begin{equation}\label{tildeF111}
\widehat{F}(t)\leq\,\left(\frac{C_{0}|z'|^{2}}{s-t}\right)^{2}\widehat{F}(s)+C(s-t)^{2}\frac{s^{2}}{|z'|^{4}},
\qquad\forall~0<t<s<\frac{2|z'|}{3}.
\end{equation}
Set $t_{i}=\delta+2C_{0}i\,|z'|^{2}$, $i=0,1,2,\cdots$. Then
$$\frac{C_{0}|z'|^{2}}{t_{i+1}-t_{i}}=\frac{1}{2}.$$
Let $k=\left[\frac{1}{4C_{0}|z'|}\right]$. Using \eqref{tildeF111} with $s=t_{i+1}$ and $t=t_{i}$, we have
$$\widehat{F}(t_{i})\leq\,\frac{1}{4}\widehat{F}(t_{i+1})+\frac{C(t_{i+1}-t_{i})^{2}t_{i+1}^{2}}{|z'|^{4}}
\leq\,\frac{1}{4}\widehat{F}(t_{i+1})+C(i+1)^{2}|z'|^{4},\quad\,i=0,1,2,\cdots,k.
$$
After $k$ iterations, using \eqref{energy_w}, we obtain
\begin{align*}
\widehat{F}(t_{0})&\leq\,\Big(\frac{1}{4}\Big)^{k}\widehat{F}(t_{k})+C\sum_{l=1}^{k}\Big(\frac{1}{4}\Big)^{l-1}l^{2}|z'|^{4}\\
&\leq\,\Big(\frac{1}{4}\Big)^{k}\widehat{F}(\frac{2|z'|}{3})+C|z'|^{4}\sum_{l=1}^{k}\Big(\frac{1}{4}\Big)^{l-1}l^{2}
\leq\,C|z'|^{4}.
\end{align*}
This implies that
$$\int_{\widehat{\Omega}_{\delta}(z')}|\nabla{w}|^{2}dx\leq\,C|z'|^{4}.$$

{\bf Case 2.} Estimate \eqref{energy_w_inomega_z1} for $0\leq|z'|\leq\sqrt{\epsilon}$.

For $0\leq|z'|\leq\sqrt{\epsilon}$, $0<t<s<\sqrt{\epsilon}$, estimate \eqref{energy_w_square} becomes
 \begin{align}\label{energy_w_square_in}
\int_{\widehat{\Omega}_{s}(z')}|w|^{2}dx
\leq\,C\epsilon^{2}\int_{\widehat{\Omega}_{s}(z')}|\nabla{w}|^{2}dx,\quad\,0<s<\sqrt{\epsilon},
\end{align}
while estimate \eqref{integal_Lubar11} becomes
\begin{align}\label{integal_Lubar11_in}
\int_{\widehat{\Omega}_{s}(z')}\left|\mathcal{L}_{\lambda,\mu}\tilde{u}\right|^{2}
&\leq\int_{|x'-z'|<s}\left(\frac{C}{\epsilon+|x'|^{2}}+\frac{C|x'|^{2}}{(\epsilon+|x'|^{2})^{3}}\right)dx'
\leq\,\frac{Cs^{2}}{\epsilon^{2}}.
\end{align}
Estimate \eqref{tildeF111} becomes, in view of \eqref{FsFt11}, \eqref{energy_w_square_in} and \eqref{integal_Lubar11_in},
\begin{equation}\label{tildeF111_in}
\widehat{F}(t)\leq\,\left(\frac{C_{0}\epsilon}{s-t}\right)^{2}\widehat{F}(s)
+C(s-t)^{2}\frac{s^{2}}{\epsilon^{2}},
\quad\forall~0<t<s<\sqrt{\epsilon}.
\end{equation}
For $0\leq|z'|\leq\sqrt{\epsilon}$, let $t_{i}=\delta+2C_{0}i\epsilon$, $i=0,1,2,\cdots$. Thus
$$\frac{C_{0}\epsilon}{t_{i+1}-t_{i}}=\frac{1}{2}.$$
Let $k=\left[\frac{1}{4C_{0}\sqrt{\epsilon}}\right]$. By \eqref{tildeF111_in} with $s=t_{i+1}$ and $t=t_{i}$, we have
\begin{align*}
\widehat{F}(t_{i})&\leq\,\frac{1}{4}\widehat{F}(t_{i+1})+\frac{C\epsilon^{2}t_{i+1}^{2}}{\epsilon^{2}}
\leq\,\frac{1}{4}\widehat{F}(t_{i+1})+C(i+1)^{2}\epsilon^{2},\quad\,i=0,1,2,\cdots,k.
\end{align*}
After $k$ iterations, we obtain
\begin{align*}
\widehat{F}(t_{0})&\leq\,\Big(\frac{1}{4}\Big)^{k}\widehat{F}(t_{k})
+C\sum_{l=1}^{k}\Big(\frac{1}{4}\Big)^{l-1}l^{2}\epsilon^{2}\\
&\leq\,\Big(\frac{1}{4}\Big)^{k}\widehat{F}(\sqrt{\epsilon})
+C\epsilon^{2}
\leq\,C\epsilon^{2}.
\end{align*}
This implies
$$\int_{\widehat{\Omega}_{\delta}(z')}|\nabla{w}|^{2}dx\leq\,C\epsilon^{2}.$$

\noindent{\bf STEP 3.} Proof of \eqref{nabla_w_ialpha}.

Making a change of variables, for $0\leq|z'|\leq\,R$,
\begin{equation}\label{changeofvariant}
 \left\{
  \begin{aligned}
  &x'-z'=\delta y',\\
  &x_{3}=\delta y_{3},
  \end{aligned}
 \right.
\end{equation}
the region $\widehat{\Omega}_{\delta}(z')$, becomes $Q_{1}$, where
$$Q_{r}=\left\{y\in\mathbb{R}^{3}~\big|~-\frac{\epsilon}{2\delta}+\frac{1}{\delta}h_{2}(\delta{y}'+z')<y_{3}
<\frac{\epsilon}{2\delta}+\frac{1}{\delta}h_{1}(\delta{y}'+z'),~|y'|<r\right\},\quad\mbox{for}~~r\leq1,$$ and the top and bottom boundaries of $Q_{r}$ become
$$y_{3}=\hat{h}_{1}(y')=:\frac{1}{\delta}\left(\frac{\epsilon}{2}+h_{1}(\delta\,y'+z')\right),~~\mbox{and}
~~\,y_{3}=\hat{h}_{2}(y'):=\frac{1}{\delta}\left(-\frac{\epsilon}{2}+h_{2}(\delta\,y'+z')\right),\quad|y'|<1,$$
respectively. Thus
$$\hat{h}_{1}(0')-\hat{h}_{2}(0')=\frac{1}{\delta}\left(\epsilon+h_{1}(z')-h_{2}(z')\right)=2,$$
and, by \eqref{h1h20} and \eqref{h1h22}, for $|y'|\leq1$,
$$\left|\nabla\hat{h}_{1}(y')\right|+\left|\nabla\hat{h}_{2}(y')\right|\leq\,C(\delta+|z'|),
\qquad\left|\nabla^{2}\hat{h}_{1}(y')\right|+\left|\nabla^{2}\hat{h}_{2}(y')\right|\leq\,C\delta.$$
Since $R$ is small, $Q_{1}$ is essentially $B_{1}(0')\times(-1,1)$ as far as applications of the Sobolev embedding theory and classical $L^{p}$ estimates for elliptic systems are concerned.
Let
\begin{equation}\label{def_U}
U(y',y_{3}):=\tilde{u}(x',x_{3}),
\quad\,W(y',y_{3}):=w(x',x_{3}),\qquad\,y\in\,Q_{1}.
\end{equation}
By \eqref{w20},
\begin{align}
\mathcal{L}_{\lambda,\mu}W
=\mathcal{L}_{\lambda,\mu}U,
\quad\quad\,y\in{Q_{1}}.
\end{align}
where
$$\mathcal{L}_{\lambda,\mu}U=\delta^{2}\mathcal{L}_{\lambda,\mu}\tilde{u}.$$
Since $W=0$ on the top and
bottom boundaries of $Q_{1}$, we
have, by the Poincar\'{e} inequality,
$$\left\|W\right\|_{H^{1}(Q_{1})}\leq\,C\left\|\nabla{W}\right\|_{L^{2}(Q_{1})}.$$
Using the interior and boundary $W^{2,p}$ estimates (see \cite{adn}, and Theorem 2.5 in \cite{gi})
and the Sobolev embedding theorem, we have, for some $p>3$,
\begin{align*}
\left\|\nabla{W}\right\|_{L^{\infty}(Q_{1/2})}\leq\,C\left\|W\right\|_{W^{2,p}(Q_{1/2})}
\leq\,C\left(\left\|\nabla{W}\right\|_{L^{2}(Q_{1})}+\left\|\mathcal{L}_{\lambda,\mu}U\right\|_{L^{\infty}(Q_{1})}\right),
\end{align*}
where $C$ depends only on $p$ and $Q_{1}$, but not on $\epsilon$. Thus
\begin{equation}\label{nablaw11}
\left\|\nabla{w}\right\|_{L^{\infty}(\widehat{\Omega}_{\delta/2}(z'))}\leq\,
\frac{C}{\delta}\left(\delta^{-\frac{1}{2}}\left\|\nabla{w}\right\|_{L^{2}(\widehat{\Omega}_{\delta}(z'))}
+\delta^{2}\left\|\mathcal{L}_{\lambda,\mu}\tilde{u}\right\|_{L^{\infty}(\widehat{\Omega}_{\delta}(z'))}\right).
\end{equation}

{\bf Case 1.} \eqref{nabla_w_ialpha} for $\sqrt{\epsilon}\leq|z'|\leq\,R$.

By \eqref{energy_w_inomega_z1},
$$\left\|\nabla{w}\right\|_{L^{2}(\widehat{\Omega}_{\delta}(z'))}^{2}=\int_{\widehat{\Omega}_{\delta}(z')}\left|\nabla{w}\right|^{2}dx\leq\,C|z'|^{4}.$$
By \eqref{L_ubar_ialpha},
$$\delta\left\|\mathcal{L}_{\lambda,\mu}\tilde{u}\right\|_{L^{\infty}(\widehat{\Omega}_{\delta}(z'))}
\leq\delta\left(\frac{C}{\epsilon+|z'|^{2}}+\frac{C|z'|}{(\epsilon+|z'|^{2})^{2}}\right)\leq\frac{C}{|z'|}.$$
It follows from \eqref{nablaw11} that
\begin{align*}\left|\nabla{w}(z',z_{3})\right|&\leq\frac{C|z'|^{2}}{\delta^{3/2}}
+\frac{C}{|z'|}\leq\frac{C}{|z'|},\qquad\forall~\sqrt{\epsilon}\leq|z'|\leq\,R.
\end{align*}

{\bf Case 2.} \eqref{nabla_w_ialpha} for $0\leq|z'|\leq\sqrt{\epsilon}$.

Using \eqref{energy_w_inomega_z1}, and \eqref{L_ubar_ialpha}, we have
$$\left\|\nabla{w}\right\|_{L^{2}(\widehat{\Omega}_{\delta}(z'))}
\leq\,C\epsilon,
\qquad\delta\left\|\mathcal{L}_{\lambda,\mu}\tilde{u}\right\|_{L^{\infty}(\widehat{\Omega}_{\delta}(z'))}\leq\frac{C|z'|}{\epsilon}+C,$$
and, using \eqref{nablaw11},
$$\left|\nabla{w}(z',z_{3})\right|
\leq\frac{C\epsilon}{\delta^{3/2}}+\frac{C|z'|}{\epsilon}+C
\leq\frac{C}{\sqrt{\epsilon}},
\qquad~\forall~~0\leq|z'|\leq\sqrt{\epsilon}.$$

\noindent{\bf STEP 4.} Proof of \eqref{nablav1} and \eqref{nablax'v1}.

Estimate \eqref{nablav1} and \eqref{nablax'v1} in $\Omega_{R}$ follows from \eqref{nabla_w_ialpha} and \eqref{nablau_bar}. 

Proposition \ref{prop1} is established.
\end{proof}

\subsection{Estimates of $|\nabla{v}_{i}^{\alpha}|$, $\alpha=4,5,6$}

Define
\begin{equation}\label{def:ubar1323}
\begin{aligned}
\bar{u}_{1}^{\alpha}=\bar{u}\psi^{\alpha},\quad\mbox{and}\quad\bar{u}_{2}^{\alpha}=\underline{u}\psi^{\alpha},
\qquad\alpha=4,5,6,
\quad\,\mbox{in}~~\widetilde{\Omega}.
\end{aligned}
\end{equation}
Clearly, $v_{i}^{\alpha}=\bar{u}_{i}^{\alpha}$ on $\partial\widetilde{\Omega}$, $i=1,2$, $\alpha=4,5,6$.

\begin{prop}\label{prop3}
Assume the above, let $v_{i}^{\alpha}\in{H}^1(\widetilde{\Omega};\mathbb{R}^{3})$ be the
weak solution of \eqref{v1alpha} with $\alpha=4,5,6$. Then for $i=1,2,~\alpha=4,5,6$,
\begin{equation}\label{lem31.02}
\int_{\widetilde{\Omega}}\left|{v}_{i}^{\alpha}\right|^{2}dx+
\int_{\widetilde{\Omega}}\left|\nabla{v}_{i}^{\alpha}\right|^{2}dx\leq\,C,
\end{equation}
and
\begin{equation}\label{nablavi3_outomega1/2}
\|\nabla{v}_{i}^{\alpha}\|_{L^{\infty}(\widetilde{\Omega}\setminus\Omega_{R})}\leq\,C,
\end{equation}
\begin{equation}\label{nabla_w13}
|\nabla(v_{i}^{\alpha}-\bar{u}_{i}^{\alpha})(x',x_{3})|\leq\,C,\qquad\,x\in\Omega_{R}.
\end{equation}
Consequently,
\begin{equation}\label{nabla_v13}
|\nabla{v}_{i}^{\alpha}(x',x_{3})|\leq\,\frac{C|x'|}{\epsilon+|x'|^{2}}+C,\qquad\,x\in\Omega_{R}.
\end{equation}
\end{prop}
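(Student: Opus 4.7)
The plan is to set $w:=v_i^\alpha-\bar u_i^\alpha$ for $\alpha\in\{4,5,6\}$, observe that $w$ again solves \eqref{w20} with zero Dirichlet data, and run the same four-step scheme used for Proposition \ref{prop1}, but with sharper pointwise bounds on $\bar u_i^\alpha$ and $\mathcal L_{\lambda,\mu}\bar u_i^\alpha$ that reflect the fact that the rigid rotation $\psi^\alpha$ is small along the segment $\overline{P_1P_2}$. The starting point is a direct computation exploiting the identities $\Delta\psi^\alpha=0$ and $\nabla\cdot\psi^\alpha=0$: expanding $\mathcal L_{\lambda,\mu}(\bar u\psi^\alpha)$ by the product rule gives the schematic bound
\[
|\mathcal L_{\lambda,\mu}\bar u_i^\alpha|\le C|\psi^\alpha||\nabla^2\bar u|+C|\nabla\psi^\alpha||\nabla\bar u|.
\]
Tracking which entries of $\psi^\alpha$ multiply which mixed derivatives of $\bar u$, using $\partial_{x_3x_3}\bar u\equiv 0$, together with the componentwise bounds $|\psi^\alpha_1|,|\psi^\alpha_2|\le C(|x'|+|x_3|)$, $|\psi^\alpha_3|\le C|x'|$ and $|x_3|\le C(\epsilon+|x'|^2)$ in $\Omega_R$, I expect to derive
\[
|\mathcal L_{\lambda,\mu}\bar u_i^\alpha(x)|\le\frac{C}{\epsilon+|x'|^2},\qquad |\nabla\bar u_i^\alpha(x)|\le\frac{C|x'|}{\epsilon+|x'|^2}+C,\quad x\in\Omega_R,
\]
which is sharper by a factor of $|x'|$ in the numerator than the translation-case bound \eqref{L_ubar_ialpha}; as a by-product, $\int_{\widetilde\Omega}|\nabla\bar u_i^\alpha|^2\,dx\le C$.

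\textbf{Steps 1--2.} Estimates \eqref{lem31.02} and \eqref{nablavi3_outomega1/2} follow by copying the energy--Korn--trace argument \eqref{energy_w_1}--\eqref{energy_w_2} verbatim, since the required integrability of $|\nabla\bar u_i^\alpha|$ is in hand. For the local energy bound I run the Caccioppoli iteration \eqref{FsFt11}--\eqref{tildeF111} of Section \ref{sec_gradient123} unchanged, except that the improved RHS estimate now gives $\int_{\widehat\Omega_s(z')}|\mathcal L_{\lambda,\mu}\bar u_i^\alpha|^2\,dx\le Cs^2/|z'|^2$ for $\sqrt\epsilon\le|z'|\le R$ (respectively $Cs^2/\epsilon$ for $|z'|\le\sqrt\epsilon$). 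The iterative inequality becomes $\widehat F(t_i)\le\tfrac14\widehat F(t_{i+1})+C(i+1)^2|z'|^6$ (resp.\ $C(i+1)^2\epsilon^3$), and after $k\sim 1/|z'|$ (resp.\ $1/\sqrt\epsilon$) steps yields the one-power-stronger conclusion
\[
\int_{\widehat\Omega_\delta(z')}|\nabla w|^2\,dx\le
\begin{cases}
C\epsilon^3,&0\le|z'|\le\sqrt\epsilon,\\
C|z'|^6,&\sqrt\epsilon<|z'|\le R.
\end{cases}
\]

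\textbf{Steps 3--4.} I rescale $\widehat\Omega_\delta(z')$ to the unit region $Q_1$ via the change of variables \eqref{changeofvariant}. In the rescaled coordinates the preceding estimates give $\|\nabla W\|_{L^2(Q_1)}=\delta^{-1/2}\|\nabla w\|_{L^2(\widehat\Omega_\delta)}\le C\delta$ and $\|\mathcal L_{\lambda,\mu}U\|_{L^\infty(Q_1)}=\delta^2\|\mathcal L_{\lambda,\mu}\bar u_i^\alpha\|_{L^\infty}\le C\delta$. The $W^{2,p}$-Sobolev argument leading to \eqref{nablaw11} then produces $\|\nabla W\|_{L^\infty(Q_{1/2})}\le C\delta$, so $\|\nabla w\|_{L^\infty(\widehat\Omega_{\delta/2}(z'))}\le C$ uniformly in $z'$, which is \eqref{nabla_w13}. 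Estimate \eqref{nabla_v13} in $\Omega_R$ then follows from the triangle inequality $|\nabla v_i^\alpha|\le|\nabla w|+|\nabla\bar u_i^\alpha|$ together with the bound on $|\nabla\bar u_i^\alpha|$, while \eqref{nabla_v13} in $\widetilde\Omega\setminus\Omega_R$ is absorbed into \eqref{nablavi3_outomega1/2}.

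The hard part is verifying the improved pointwise estimate on $\mathcal L_{\lambda,\mu}\bar u_i^\alpha$. In the translation case the singular contribution $C|x'|/(\epsilon+|x'|^2)^2$ appears because $|\partial_{x_kx_3}\bar u|\le C|x'|/(\epsilon+|x'|^2)^2$ enters $\mathcal L_{\lambda,\mu}(\bar u\,e_\alpha)$ unmultiplied by any geometric factor. For the rotations such a mixed derivative always appears paired with a component of $\psi^\alpha$, which either carries a factor $|x'|$ (from $\psi^\alpha_j=\pm x_k$, $k=1,2$) or a factor $|x_3|\le C(\epsilon+|x'|^2)$ (from $\psi^\alpha_j=\pm x_3$); in either case the singularity is absorbed into $C/(\epsilon+|x'|^2)$. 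Once this index bookkeeping is done case by case for $\psi^4,\psi^5,\psi^6$, the remainder of the argument is essentially mechanical.
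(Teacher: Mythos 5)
Your proposal is correct and follows essentially the same route as the paper's proof: you identify the same improved pointwise bounds $|\nabla\bar u_i^\alpha|\leq C|x'|/(\epsilon+|x'|^2)+C$ and $|\mathcal L_{\lambda,\mu}\bar u_i^\alpha|\leq C/(\epsilon+|x'|^2)$ (the paper records these as \eqref{nabla_baru14_in} and \eqref{L_ubar_i3}, compressing your componentwise bookkeeping into the factor $\epsilon+|x'|$), and then run the same global energy argument, Caccioppoli iteration with the sharpened right-hand side yielding $C|z'|^6$ and $C\epsilon^3$, and rescaled $W^{2,p}$ estimate. The only cosmetic difference is that the paper applies \eqref{nablaw11} directly rather than restating it in the $y$-variables.
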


Using \eqref{nablau_bar} and \eqref{nablau_bar_outside}, we have
\begin{equation}\label{nabla_baru14_in}
\left|\nabla\bar{u}_{i}^{\alpha}(x)\right|\leq\,\frac{C|x'|}{\epsilon+|x'|^{2}}+C,\quad\,x\in\Omega_{R},
\end{equation}
and
\begin{equation}\label{nabla_baru13_out}
\left|\nabla\bar{u}_{i}^{\alpha}(x)\right|\leq\,C,\quad\,x\in\widetilde{\Omega}\setminus\Omega_{R}.
\end{equation}
It follows from \eqref{def:ubar1323}, \eqref{L_u}, \eqref{nablau_bar} and \eqref{nabla2u_bar} that, for $i=1,2$, $\alpha=4,5,6$,
\begin{equation}\label{L_ubar_i3}
|\mathcal{L}_{\lambda,\mu}\bar{u}_{i}^{\alpha}|
\leq\,C\left(|\nabla\bar{u}|+(\epsilon+|x'|)\sum_{k+l<6}|\partial_{x_{k}x_{l}}\bar{u}|\right)
\leq\frac{C}{\epsilon+|x'|^{2}},\quad\,x\in\Omega_{R}.
\end{equation}

\begin{proof}[Proof of Proposition \ref{prop3}]
Denote
\begin{equation}\label{def_w456}
w_{i}^{\alpha}:=v_{i}^{\alpha}-\bar{u}_{i}^{\alpha},\qquad\,i=1,2,~\alpha=4,5,6.
\end{equation}
For simplicity, we also use the notation 
$$w:=w_{i}^{\alpha},\qquad \tilde{u}:=\bar{u}_{i}^{\alpha},\quad i=1,2,~~\alpha=4,5,6.$$ 
The proof is divided into three steps.

\noindent{\bf STEP 1.} Proof of \eqref{lem31.02} and \eqref{nablavi3_outomega1/2}. 

Similarly as Step 1 in the proof of Proposition \ref{prop1}, by \eqref{def_w456} and \eqref{v1alpha} with $\alpha=4,5,6$, Using \eqref{def:ubar1323}, and \eqref{L_u}, \eqref{traceembedding} again, \eqref{energy_w_2} is replaced by
\begin{align}\label{energy_w_24}
\int_{\Omega_{R}}w\left(\mathcal{L}_{\lambda,\mu}\tilde{u}\right)dx
&\leq\,C\int_{\Omega_{R}}|\nabla{w}||\nabla\tilde{u}|dx
+\int\limits_{\scriptstyle |x'|=R,\atop\scriptstyle
-\epsilon/2+h_{2}(x')<x_{3}<\epsilon/2+h_{1}(x')\hfill}C~|w|\nonumber\\
&\leq\,C\left(\int_{\Omega_{R}}|\nabla{w}|^{2}dx\right)^{1/2}
\left(\int_{\Omega_{R}}|\nabla\tilde{u}|^{2}dx\right)^{1/2}
+C\left(\int_{\widetilde{\Omega}\setminus\Omega_{R}}\left|\nabla{w}\right|^{2}dx\right)^{1/2}.
\end{align}
Using \eqref{nabla_baru14_in}, we have
\begin{equation}\label{energytildeu}
\int_{\Omega_{R}}|\nabla\tilde{u}|^{2}dx\leq\,C.
\end{equation}
It follows from \eqref{energy_w_1} for this situation that
$$\int_{\widetilde{\Omega}}\left|\nabla{w}\right|^{2}dx
\leq\,C\left(\int_{\widetilde{\Omega}}|\nabla{w}|^{2}dx\right)^{1/2}.$$
This implies
$$\int_{\widetilde{\Omega}}\left|\nabla{w}\right|^{2}dx
\leq\,C.$$
By the Poincar\'{e} inequality,
$$\int_{\widetilde{\Omega}}\left|w\right|^{2}dx+\int_{\widetilde{\Omega}}\left|\nabla{w}\right|^{2}dx
\leq\,C.$$
Combining with \eqref{energytildeu}, we obtain \eqref{lem31.02}. 

Using \eqref{lem31.02} and recalling the definition of $\tilde{u}$, we apply the standard elliptic estimates (see \cite{adn}) to obtain \eqref{nablavi3_outomega1/2}.

\noindent{\bf STEP 2.} Proof of 
\begin{equation}\label{energyw13_out}
\int_{\widehat{\Omega}_{\delta}(z')}|\nabla{w}|^{2}dx\leq
\begin{cases}
C|z'|^{6},&\sqrt{\epsilon}\leq|z'|<R,\\
C\epsilon^{3},&0\leq|z'|<\sqrt{\epsilon},
\end{cases}
\end{equation}
with $\delta=\delta(z')$ defined in \eqref{delta}.

The proof is similar to that of \eqref{energy_w_inomega_z1}.
We still have \eqref{FsFt11}.

{\bf Case 1.} Estimate \eqref{energyw13_out} for $\sqrt{\epsilon}\leq|z'|\leq\,R$.

For $0<t<s<\frac{2|z'|}{3}$, using \eqref{L_ubar_i3}, we have, instead of \eqref{integal_Lubar11},
\begin{align}\label{Lu13}
\int_{\widehat{\Omega}_{s}(z')}\left|\mathcal{L}_{\lambda,\mu}\tilde{u}\right|^{2}dx
\leq\int_{|x'-z'|<s}\frac{C}{\epsilon+|x'|^{2}}\,dx'\leq\frac{Cs^{2}}{|z'|^{2}}.
\end{align}
Using \eqref{energy_w_square}, instead of \eqref{tildeF111}, we have
\begin{equation}\label{tildeF13}
\widehat{F}(t)\leq\,\left(\frac{C_{0}|z'|^{2}}{s-t}\right)^{2}\widehat{F}(s)+C(s-t)^{2}\frac{s^{2}}{|z'|^{2}},
\quad\forall~0<t<s<\frac{2|z'|}{3}.
\end{equation}
We define $\{t_{i}\}$, $k$ and iterate as in the proof of \eqref{energy_w_inomega_z1}, right below formula \eqref{tildeF111}, to obtain
$$\widehat{F}(t_{0})
\leq\,\Big(\frac{1}{4}\Big)^{k}\widehat{F}(\frac{3|z'|}{2})+C|z'|^{6}\sum_{l=1}^{k}\Big(\frac{1}{4}\Big)^{l-1}l^{2}\leq\,C|z'|^{6}.$$
This implies that
$$\int_{\widehat{\Omega}_{\delta}(z')}|\nabla{w}|^{2}dx\leq\,C|z'|^{6}.$$

{\bf Case 2.} Estimate \eqref{energyw13_out} for $0\leq|z'|\leq\sqrt{\epsilon}$.

For $0<t<s<\sqrt{\epsilon}$, estimate \eqref{energy_w_square_in} remains the same. Estimate \eqref{integal_Lubar11_in} becomes
\begin{align}\label{integal_Lubar13_in}
\int_{\widehat{\Omega}_{s}(z')}\left|\mathcal{L}_{\lambda,\mu}\tilde{u}\right|^{2}dx
\leq\int_{|x'-z'|<s}\frac{C}{\epsilon+|x'|^{2}}\,dx'\leq\frac{Cs^{2}}{\epsilon},\qquad\,0<s<\sqrt{\epsilon}.
\end{align}
Estimate \eqref{tildeF111_in} becomes
\begin{equation}\label{tildeF2}
F(t)\leq\,\left(\frac{C_{0}\epsilon}{s-t}\right)^{2}F(s)+\frac{C(s-t)^{2}s^{2}}{\epsilon},
\quad\forall~0<t<s<\sqrt{\epsilon}.
\end{equation}
Define $\{t_{i}\}$, $k$ and iterate as in the proof of \eqref{energy_w_inomega_z1}, right below formula \eqref{tildeF111_in}, to obtain
$$F(t_{0})\leq\,\Big(\frac{1}{4}\Big)^{k}\widehat{F}(\sqrt{\epsilon})
+C\sum_{l=1}^{k}\Big(\frac{1}{4}\Big)^{l-1}l^{2}\epsilon^{3}
\leq\,C\epsilon^{3}.$$
This implies as before that
$$\int_{\widehat{\Omega}_{s}(z')}|\nabla{w}|^{2}dx
\leq\,C\epsilon^{3}.$$
\eqref{energyw13_out} is proved.

\noindent{\bf STEP 3.} Proof of \eqref{nabla_w13} and \eqref{nabla_v13}.

The proof  is similar to that of \eqref{nabla_w_ialpha}. In Case 1, for $\sqrt{\epsilon}\leq|z'|\leq\,R$, using estimates \eqref{energyw13_out} and \eqref{L_ubar_i3},
$$\int_{\widehat{\Omega}_{\delta}(z')}\left|\nabla{w}\right|^{2}dx\leq\,C|z'|^{6},
\quad\mbox{and}\quad
\delta\left\|\mathcal{L}_{\lambda,\mu}\tilde{u}\right\|_{L^{\infty}(\widehat{\Omega}_{\delta}(z'))}\leq\,C,$$
we obtain, using \eqref{nablaw11},
$$\left|\nabla{w}(z',z_{3})\right|\leq\frac{C|z'|^{3}}{\delta^{3/2}}+C
\leq\,C,\quad\mbox{for}~~\sqrt{\epsilon}\leq|z'|\leq\,R.$$

In Case 2, for $0\leq|z'|\leq\sqrt{\epsilon}$, using estimates \eqref{energyw13_out} and \eqref{L_ubar_i3},
$$\int_{\widehat{\Omega}_{\delta}(z')}\left|\nabla{w}\right|^{2}dx
\leq\,C\epsilon^{3},\quad\mbox{and}\quad
\delta\left\|\mathcal{L}_{\lambda,\mu}\tilde{u}\right\|_{L^{\infty}(\widehat{\Omega}_{\delta}(z'))}\leq\,C,$$
we have, using again \eqref{nablaw11},
$$\left|\nabla{w}(z',z_{3})\right|
\leq\frac{C\epsilon^{3/2}}{\delta^{3/2}}+C
\leq\,C,
\quad\mbox{for}~~0\leq|z'|\leq\sqrt{\epsilon}.$$
Estimate \eqref{nabla_w13} is established. 

Estimate \eqref{nabla_v13} follows from \eqref{nabla_w13} and \eqref{nabla_baru14_in}.
\end{proof}

\section{Estimates of $|C_{i}|$ and $|C_{1}^{\alpha}-C_{2}^{\alpha}|$, $\alpha=1,2,3$}\label{sec_C1C2}

In this section, we first prove that $C_{1}^{\alpha}$ and $C_{2}^{\alpha}$ are uniformly bounded with respect to $\epsilon$, and then estimate the difference  $C_{1}^{\alpha}-C_{2}^{\alpha}$.

\begin{prop}\label{prop_C1-C2}
Let $C_{i}^{\alpha}$ be defined in \eqref{decom_u}. Then
\begin{equation}\label{C1C2bound}
|C_{i}^{\alpha}|\leq\,C,\qquad\forall~i=1,2,~\alpha=1,2,\cdots,6;
\end{equation}
and
\begin{equation}\label{C1-C2}
\left|C_{1}^{\alpha}-C_{2}^{\alpha}\right|\leq\,\frac{C}{|\ln\epsilon|},\quad\alpha=1,2,3.
\end{equation}
\end{prop}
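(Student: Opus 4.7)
}

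The plan is to convert the six integral orthogonality conditions in \eqref{mainequation} into a $12\times 12$ linear system for the vector $\vec C=(C_i^\alpha)$, then invert it by exploiting the approximate rank structure of the coefficient matrix. Starting from the decomposition \eqref{decom_u} and inserting $u$ into $\int_{\partial D_i}\frac{\partial u}{\partial\nu_0}\big|_+\!\cdot\psi^\alpha=0$, integration by parts using \eqref{equ4-1} (and the fact that $\mathcal L_{\lambda,\mu}v_j^\beta=0$ in $\widetilde\Omega$, $v_j^\beta=\delta_{ij}\psi^\beta$ on $\partial D_i$, and $v_j^\beta=0$ on $\partial\Omega$) gives
\begin{equation*}
\sum_{j=1}^{2}\sum_{\beta=1}^{6} a_{ij}^{\alpha\beta}\,C_j^\beta \;=\; b_i^\alpha,\qquad a_{ij}^{\alpha\beta}:=\int_{\widetilde\Omega}\bigl(\mathbb{C}^0 e(v_i^\alpha),e(v_j^\beta)\bigr)\,dx,\quad b_i^\alpha:=-\int_{\widetilde\Omega}\bigl(\mathbb{C}^0 e(v_0),e(v_i^\alpha)\bigr)\,dx.
\end{equation*}
The matrix $A=(a_{ij}^{\alpha\beta})$ is symmetric and, by \eqref{coeff5} together with Korn's inequality, positive definite; $\vec b$ is uniformly bounded by Cauchy--Schwarz combined with \eqref{mainev0} and the energy bounds on $v_i^\alpha$ from Propositions \ref{prop1} and \ref{prop3}.

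The key is then to pin down the size of the entries of $A$. First I would replace $v_i^\alpha$ by the explicit test functions $\bar u_i^\alpha$ from \eqref{def:ubar1112}, \eqref{def:ubar2122}, \eqref{def:ubar1323} modulo bounded-energy errors: by \eqref{energy_w} and Proposition \ref{prop3}, the differences $w_i^\alpha=v_i^\alpha-\bar u_i^\alpha$ satisfy $\int_{\widetilde\Omega}|\nabla w_i^\alpha|^2\,dx\le C$, which lets me transfer all the singular behaviour to $\bar u_i^\alpha$. For $\alpha=1,2,3$ the pointwise bound \eqref{nablau_bar} on $\partial_{x_3}\bar u$ yields, after integration along the thin neck of height $\sim\epsilon+|x'|^2$,
\[
a_{ii}^{\alpha\alpha}\;\sim\;\int_{|x'|<R}\frac{dx'}{\epsilon+|x'|^2}\;\sim\;|\ln\epsilon|,\qquad \alpha=1,2,3,
\]
which is the source of the $|\ln\epsilon|$ blow-up in dimension three. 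For the off-diagonal $a_{12}^{\alpha\alpha}$ I exploit the cancellation $\bar u+\underline u\equiv 1$ in $\Omega_{2R}$ combined with \eqref{mainev1+23}: since $\|\nabla(v_1^\alpha+v_2^\alpha)\|_{L^\infty}\le C$, the sum $a_{11}^{\alpha\alpha}+a_{12}^{\alpha\alpha}$ is $O(1)$, so the singular parts of $a_{11}^{\alpha\alpha}$ and $a_{12}^{\alpha\alpha}$ are negatives of each other. Entries with $\alpha$ or $\beta$ in $\{4,5,6\}$ are $O(1)$ by Proposition \ref{prop3}, and cross entries between translation and rotation modes are $O(1)$ by Cauchy--Schwarz combined with the bounds \eqref{nabla_baru14_in}, \eqref{lem31.02}.

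With all entries identified, the matrix $A$ has the block form $A=|\ln\epsilon|\,M_{\text{sing}}+M_{\text{reg}}$ where $M_{\text{reg}}$ is bounded and $M_{\text{sing}}$ is a rank-$3$ symmetric positive semidefinite matrix whose range consists precisely of vectors with $C_1^\alpha-C_2^\alpha$ nonzero in the translation coordinates $\alpha=1,2,3$. Feeding this block structure and the bounded right-hand side $\vec b$ into the linear algebra lemma \ref{lem_matrix} (invoked in the paper for exactly this purpose) yields simultaneously $|C_i^\alpha|\le C$ for all $i,\alpha$ and the extra gain $|C_1^\alpha-C_2^\alpha|\le C/|\ln\epsilon|$ for $\alpha=1,2,3$.

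\textbf{Main obstacle.} The delicate step is establishing the two-sided asymptotic $a_{ii}^{\alpha\alpha}\asymp|\ln\epsilon|$ with the correct constant, together with the quantitative cancellation $a_{11}^{\alpha\alpha}+a_{12}^{\alpha\alpha}=O(1)$, while showing that off-diagonal entries $a_{ii}^{\alpha\beta}$ ($\alpha\ne\beta$, $\alpha,\beta\in\{1,2,3\}$) do not contribute at the $|\ln\epsilon|$ level. Only when these refined entrywise estimates are pinned down does the nondegeneracy hypothesis of Lemma \ref{lem_matrix} become verifiable, so that solving the linear system produces the sharp $1/|\ln\epsilon|$ gain rather than merely $|C_1^\alpha-C_2^\alpha|=O(1)$.
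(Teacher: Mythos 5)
Your plan for the difference estimate \eqref{C1-C2} follows the paper's strategy: form the linear system from the orthogonality conditions, pin down the entrywise sizes (the translation diagonal entries $a_{ii}^{\alpha\alpha}\asymp|\ln\epsilon|$ for $\alpha=1,2,3$, the cancellation $a_{11}^{\alpha\beta}+a_{21}^{\alpha\beta}=O(1)$ coming from $\bar u+\underline u\equiv 1$ via \eqref{mainev1+23}, and $O(1)$ bounds on all entries touching the rotation modes or off-diagonal translation pairs), and invoke the block-inversion Lemma \ref{lem_matrix}. These are exactly the contents of Lemmas \ref{lem_ap} and \ref{lem_a_11}, and your ``Main obstacle'' paragraph correctly flags the hardest entrywise estimates.

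There is, however, a genuine gap in how you propose to obtain the boundedness \eqref{C1C2bound}. You claim that feeding the full $12\times12$ system into Lemma \ref{lem_matrix} yields $|C_i^\alpha|\le C$ and $|C_1^\alpha-C_2^\alpha|\le C/|\ln\epsilon|$ simultaneously, but the lemma does not apply to the $12\times12$ matrix in the form you describe: the singular part $|\ln\epsilon|\,M_{\mathrm{sing}}$ has rank only $3$, so no $6\times6$ sub-block $A$ can satisfy the hypothesis $\|A^{-1}\|\le 1/(\theta\gamma)$ with $\gamma=|\ln\epsilon|$ --- the large behaviour lives entirely on the three-dimensional span of the translation differences $C_1^\alpha-C_2^\alpha$ and is degenerate on its complement. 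The paper instead establishes \eqref{C1C2bound} first, by a completely separate argument: the energy-minimization characterization of $u$ gives $\|u\|_{H^1(\widetilde\Omega)}\le C$, the Sobolev trace embedding gives $\|u\|_{L^2(\partial D_1\cap B_R)}\le C$, and a compactness/contradiction argument using Lemma \ref{lem_xi} (linear independence of $\{\psi^\alpha\}$ restricted to a fixed portion of $\partial D_1$) yields $\bigl\|\sum_\alpha\widehat C_1^\alpha\psi^\alpha\bigr\|_{L^2}\ge 1/C$ for unit $\widehat C_1$, forcing $|C_1|\le C$. Only with $|C_2|\le C$ already in hand can one bound $p:=b_1-(a_{11}+a_{21})C_2$ and rewrite the problem as the $6\times6$ system $a_{11}(C_1-C_2)=p$, to which Lemma \ref{lem_matrix} then applies with $m=3$, $\gamma=|\ln\epsilon|$ and the translation $3\times3$ sub-block of $a_{11}$ as the block $A$. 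Thus there is a logical dependency --- the a priori bound on $C_i^\alpha$ must come first and by an independent route --- that your plan does not respect, and the matrix lemma alone will not supply it.
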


\subsection{Boundedness of $|C_{i}|$}

\begin{proof}[Proof of \eqref{C1C2bound}]
Let $u_{\epsilon}$ be the solution of \eqref{mainequation}. By theorem 4.6 in the appendix in \cite{bll}, $u_{\epsilon}$ is the minimizer of
$$I_{\infty}[u]:=\frac{1}{2}\int_{\widetilde{\Omega}}\Big(\mathbb{C}e(u),e(u)\Big)$$
on $\mathcal{A}$. It follows that
$$\|u_{\epsilon}\|_{H^{1}(\widetilde{\Omega})}^{2}\leq\,C\|e(u_{\epsilon})\|_{L^{2}(\widetilde{\Omega})}^{2}\leq\,CI_{\infty}[u_{\epsilon}]\leq\,C.$$
By the Sobolev trace embedding theorem,
$$\|u_{\epsilon}\|_{L^{2}(\partial{D}_{1}\cap\,B_{R})}\leq\,C.$$
Recalling that 
$$u_{\epsilon}=\sum_{\alpha=1}^{6}C_{1}^{\alpha}\psi^{\alpha},\qquad\mbox{on}~~\partial{D}_{1}.$$
If $C_{1}:=(C_{1}^{1},C_{1}^{2},\cdots,C_{1}^{6})^{T}=0$, there is nothing to prove. Otherwise
\begin{equation}\label{b3-1}
C\geq|C_{1}|\left\|\sum_{\alpha=1}^{6}\widehat{C}_{1}^{\alpha}\psi^{\alpha}\right\|_{L^{2}(\partial{D}_{1}\cap\,B_{R})},
\end{equation}
where $\widehat{C}_{1}^{\alpha}=\frac{C_{1}^{\alpha}}{|C_{1}|}$ and $|\widehat{C}_{1}|=1$. It is easy to see that
\begin{equation}\label{b3-2}
\left\|\sum_{\alpha=1}^{6}\widehat{C}_{1}^{\alpha}\psi^{\alpha}\right\|_{L^{2}(\partial{D}_{1}\cap\,B_{R})}\geq\frac{1}{C}.
\end{equation}
Indeed, if not, along a subsequence $\epsilon\rightarrow0$, $\widehat{C}_{1}^{\alpha}\rightarrow\overline{C}_{1}^{\alpha}$, and
$$\left\|\sum_{\alpha=1}^{6}\overline{C}_{1}^{\alpha}\psi^{\alpha}\right\|_{L^{2}(\partial{D}^{*}_{1}\setminus{B}_{R})}=0,$$
where $\partial{D}^{*}_{1}$ is the limit of $\partial{D}_{1}$ as $\epsilon\rightarrow0$ and $|\overline{C}_{1}|=1$. This implies $\sum_{\alpha=1}^{6}\overline{C}_{1}^{\alpha}\psi^{\alpha}=0$ on $\partial{D}^{*}_{1}\setminus{B}_{R}$. But $\left\{\psi^{\alpha}\big|_{\partial{D}^{*}_{1}\setminus{B}_{R}}\right\}$ is easily seen to be linear independent, using Lemma \ref{lem_xi}, we must have $\overline{C}_{1}=0$. This is a contradiction.  \eqref{C1C2bound} follows from \eqref{b3-1} and \eqref{b3-2}.
\end{proof}

\subsection{Estimates of $\left|C_{1}^{\alpha}-C_{2}^{\alpha}\right|$, $\alpha=1,2,3$}

In the rest of this section, we prove \eqref{C1-C2}. By the linearity of $e(u)$,
$$e(u)=\sum_{\alpha=1}^{6}C_{1}^{\alpha}e\left(v_{1}^{\alpha}\right)+\sum_{\alpha=1}^{6}C_{2}^{\alpha}e(v_{2}^{\alpha})+e({v}_{0}),
\quad\mbox{in}~~\widetilde{\Omega}.$$
It follows from the forth line of \eqref{mainequation} that
\begin{align}\label{C1C2_2}
\sum_{\alpha=1}^{6}C_{1}^{\alpha}\int_{\partial{D}_{j}}\frac{\partial{v}_{1}^{\alpha}}{\partial\nu_{0}}\bigg|_{+}\cdot\psi^{\beta}
&+\sum_{\alpha=1}^{6}C_{2}^{\alpha}\int_{\partial{D}_{j}}\frac{\partial{v}_{2}^{\alpha}}{\partial\nu_{0}}\bigg|_{+}\cdot\psi^{\beta}
+\int_{\partial{D}_{j}}\frac{\partial{v}_{0}}{\partial\nu_{0}}\bigg|_{+}\cdot\psi^{\beta}=0,\nonumber\\
&\quad\,j=1,2,~~\beta=1,2,\cdots,6.
\end{align}

Denote
\begin{align*}
a_{ij}^{\alpha\beta}=-\int_{\partial{D}_{j}}\frac{\partial{v}_{i}^{\alpha}}{\partial\nu_{0}}\bigg|_{+}\cdot\psi^{\beta},\qquad
b_{j}^{\beta}=\int_{\partial{D}_{j}}\frac{\partial{v}_{0}}{\partial\nu_{0}}\bigg|_{+}\cdot\psi^{\beta},\quad\,i,j=1,2,~~\alpha,\beta=1,2,\cdots,6.
\end{align*}
Multiplying the first line of \eqref{v1alpha} and \eqref{v3}, respectively, by $v_{j}^{\beta}$, and integrating by parts over $\widetilde{\Omega}$ leads to, in view of \eqref{equ4-1}, that
$$a_{ij}^{\alpha\beta}=\int_{\widetilde{\Omega}}\left(\mathbb{C}^{0}e(v_{i}^{\alpha}),e(v_{j}^{\beta})\right)dx,\quad
b_{j}^{\beta}=-\int_{\widetilde{\Omega}}\left(\mathbb{C}^{0}e({v}_{0}),e(v_{j}^{\beta})\right)dx.$$
Then \eqref{C1C2_2} can be written as
\begin{equation}\label{C1C2_3}
\left\{
\begin{aligned}
\sum_{\alpha=1}^{6}C_{1}^{\alpha}a_{11}^{\alpha\beta}+\sum_{\alpha=1}^{6}C_{2}^{\alpha}a_{21}^{\alpha\beta}-b_{1}^{\beta}&=0,\\
\sum_{\alpha=1}^{6}C_{1}^{\alpha}a_{12}^{\alpha\beta}+\sum_{\alpha=1}^{6}C_{2}^{\alpha}a_{22}^{\alpha\beta}-b_{2}^{\beta}&=0,
\end{aligned}
\right.\quad\quad~~\beta=1,2,\cdots,6.
\end{equation}
For simplicity, we use $a_{ij}$ to denote the $6\times6$ matrix $(a_{ij}^{\alpha\beta})$. To estimate $\left|C_{1}^{\alpha}-C_{2}^{\alpha}\right|$, $\alpha=1,2,3$, we only use the first six equations in \eqref{C1C2_3}:
\begin{equation}\label{equ_abc}
 a_{11}C_{1} +a_{21}C_{2} =b_{1},
\end{equation}
where
$$C_{1}=\Big(C_{1}^{1}, C_{1}^{2},\cdots, C_{1}^{6}\Big)^{T} ,
\quad\,C_{2}=\Big(C_{2}^{1}, C_{2}^{2},\cdots, C_{2}^{6}\Big)^{T},
\quad\,b_{1}=\Big(b_{1}^{1},b_{1}^{2},\cdots, b_{1}^{6}\Big)^{T}.$$
Set
\begin{equation*}
p:=b_{1}-\Big(a_{11}+a_{21}\Big)C_{2},
\end{equation*}
\eqref{equ_abc} can be rewritten as
\begin{equation}\label{C1C2_4}
a_{11}\Big(C_{1}-C_{2}\Big)=p.
\end{equation}

In order to prove \eqref{C1-C2}, we first estimate the right hand side of \eqref{C1C2_4}.
\begin{lemma}\label{lem_ap}
\begin{align*}
&\left|a_{11}^{\alpha\beta}+a_{21}^{\alpha\beta}\right|\leq\,C,\quad\alpha,\beta=1,2,\cdots,6;\\\\
&\left|b_{1}^{\beta}\right|\leq\,C,\quad\beta=1,2,\cdots,6.
\end{align*}
Consequently,
\begin{equation}\label{p_bound}
|p|\leq\,C.
\end{equation}
\end{lemma}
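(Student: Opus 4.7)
The plan is to exploit the uniform gradient bounds \eqref{estimate_v3} and \eqref{estimate_v1+v2} from Lemma \ref{lem_v3_v1+v2}, and turn the bilinear-form expressions for $a_{ij}^{\alpha\beta}$ and $b_{j}^{\beta}$ into boundary integrals over $\partial D_{1}$ by applying the Green identity \eqref{equ4-1}. This works because, although $\nabla v_{1}^{\beta}$ itself may blow up near $\overline{P_{1}P_{2}}$, we only ever pair $v_{1}^{\beta}$ (as a test function) against functions whose conormal derivatives are already uniformly controlled.

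For the estimate on $b_{1}^{\beta}$, I would use the definition
$b_{1}^{\beta}=\int_{\partial D_{1}}\frac{\partial v_{0}}{\partial\nu_{0}}\big|_{+}\cdot\psi^{\beta}$,
combined with $\|\nabla v_{0}\|_{L^{\infty}(\widetilde{\Omega})}\leq C$ from \eqref{estimate_v3} and the fact that $\psi^{\beta}$ is bounded on the bounded surface $\partial D_{1}$ (with area controlled by a universal constant). This directly yields $|b_{1}^{\beta}|\leq C$.

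For the sum $a_{11}^{\alpha\beta}+a_{21}^{\alpha\beta}$, I would first combine the two integrals into
$\int_{\widetilde{\Omega}}\left(\mathbb{C}^{0}e(v_{1}^{\alpha}+v_{2}^{\alpha}),e(v_{1}^{\beta})\right)dx$.
Then, since $\mathcal{L}_{\lambda,\mu}(v_{1}^{\alpha}+v_{2}^{\alpha})=0$ in $\widetilde{\Omega}$, I apply \eqref{equ4-1} with $v_{1}^{\beta}$ as the test function. Using the boundary data $v_{1}^{\beta}=\psi^{\beta}$ on $\partial D_{1}$, $v_{1}^{\beta}=0$ on $\partial D_{2}\cup\partial\Omega$, all boundary contributions collapse to an integral over $\partial D_{1}$ of the form $\int_{\partial D_{1}}\frac{\partial(v_{1}^{\alpha}+v_{2}^{\alpha})}{\partial\nu_{0}}\big|_{+}\cdot\psi^{\beta}$ (up to sign from orientation). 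This is bounded by a universal constant since $\|\nabla(v_{1}^{\alpha}+v_{2}^{\alpha})\|_{L^{\infty}(\widetilde{\Omega})}\leq C$ by \eqref{estimate_v1+v2} and $|\psi^{\beta}|$ is bounded on $\partial D_{1}$.

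Finally, \eqref{p_bound} is immediate from the definition $p=b_{1}-(a_{11}+a_{21})C_{2}$ together with the two bounds just proved and the boundedness $|C_{2}|\leq C$ from \eqref{C1C2bound}. The only delicate point to watch is the sign/orientation convention when applying \eqref{equ4-1} on $\partial D_{1}$ (which is an inner boundary of $\widetilde{\Omega}$ with outer normal pointing into $D_{1}$), but since we are only after absolute-value bounds this is just bookkeeping; I do not anticipate any real obstacle.
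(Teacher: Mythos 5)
Your proof is correct, and it takes a genuinely different route from the paper's. The paper stays inside $\widetilde{\Omega}$ and estimates each bilinear form $\int_{\widetilde{\Omega}}\left(\mathbb{C}^{0}e(v_{1}^{\alpha}+v_{2}^{\alpha}),e(v_{1}^{\beta})\right)dx$ and $\int_{\widetilde{\Omega}}\left(\mathbb{C}^{0}e(v_{0}),e(v_{1}^{\beta})\right)dx$ by H\"older, pairing the $L^{\infty}$ bound from Lemma \ref{lem_v3_v1+v2} with an $L^{1}$ bound on $\nabla v_{1}^{\beta}$ over the narrow region; the $L^{1}$ bound in turn requires the pointwise gradient estimates \eqref{nablav1} and \eqref{nabla_v13} from Propositions \ref{prop1} and \ref{prop3}, and the outer bounds \eqref{nablavialpha_outomega1/2} and \eqref{nablavi3_outomega1/2}. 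You instead integrate by parts via \eqref{equ4-1}, using that $\mathcal{L}_{\lambda,\mu}(v_{1}^{\alpha}+v_{2}^{\alpha})=0$ and $\mathcal{L}_{\lambda,\mu}v_{0}=0$ in $\widetilde{\Omega}$ and that $v_{1}^{\beta}$ vanishes on $\partial D_{2}\cup\partial\Omega$ and equals $\psi^{\beta}$ on $\partial D_{1}$, thereby reducing both quantities to boundary integrals over $\partial D_{1}$ of a conormal derivative (bounded in $L^{\infty}$ by \eqref{estimate_v3}, \eqref{estimate_v1+v2} and the $C^{1}$-up-to-boundary regularity of the solutions) against the bounded function $\psi^{\beta}$. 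Your route is therefore lighter: it needs only Lemma \ref{lem_v3_v1+v2} and bypasses Propositions \ref{prop1} and \ref{prop3} entirely. What the paper's interior route buys is that it never needs to invoke boundary traces of gradients or the sign bookkeeping of \eqref{equ4-1}, working purely with volume integrals; both are complete proofs.
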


\begin{proof}
For $\beta=1,2,3$, using \eqref{nablavialpha_outomega1/2} and \eqref{nablav1}, we have
\begin{align}\label{nablav11123_L1}
\int_{\widetilde{\Omega}}\left|\nabla{v}_{1}^{\beta}\right|dx
\leq\int_{\Omega_{R}}\left|\nabla{v}_{1}^{\beta}\right|dx+\int_{\widetilde{\Omega}\setminus\Omega_{R}}\left|\nabla{v}_{1}^{\beta}\right|dx
\leq\int_{\Omega_{R}}\frac{Cdx}{\epsilon+|x'|^{2}}+C
\leq\,C.
\end{align}
For $\beta=4,5,6$, using \eqref{nablavi3_outomega1/2} and \eqref{nabla_v13}, we have
\begin{align}\label{nablav11456_L1}
\int_{\widetilde{\Omega}}\left|\nabla{v}_{1}^{\beta}\right|dx
\leq\int_{\Omega_{R}}\frac{C(\epsilon+|x'|)dx}{\epsilon+|x'|^{2}}+C
\leq\,C.
\end{align}
For $\alpha,\beta=1,2,\cdots,6$, using \eqref{mainev1+23}, \eqref{nablav11123_L1} and \eqref{nablav11456_L1}, we have
\begin{align*}
\left|a_{11}^{\alpha\beta}+a_{21}^{\alpha\beta}\right|
&=\left|\int_{\widetilde{\Omega}}\left(\mathbb{C}^{0}e(v_{1}^{\alpha}+v_{2}^{\alpha}),e(v_{1}^{\beta})\right)dx\right|
\leq\,C\left\|\nabla(v_{1}^{\alpha}+v_{2}^{\alpha})\right\|_{L^{\infty}(\widetilde{\Omega})}\int_{\widetilde{\Omega}}\left|\nabla{v}_{1}^{\beta}\right|dx
\leq\,C.
\end{align*}
Similarly, it follows from \eqref{mainev0}, \eqref{nablav11123_L1} and \eqref{nablav11456_L1} that
$$\left|b_{1}^{\beta}\right|=\left|\int_{\widetilde{\Omega}}\left(\mathbb{C}^{0}e(v_{1}^{\beta}),e({v}_{0})\right)dx\right|
\leq\,C\|\nabla{v}_{0}\|_{L^{\infty}(\widetilde{\Omega})}\int_{\widetilde{\Omega}}\left|\nabla{v}_{1}^{\beta}\right|dx
\leq\,C,\quad\beta=1,2,\cdots,6.$$
These estimates above, combining with \eqref{C1C2bound}, yield \eqref{p_bound}.
\end{proof}

It can be proved that $a_{11}$ is positive definite and therefore, recalling \eqref{C1C2_4},
$$C_{1}-C_{2}=(a_{11})^{-1}p.$$
Given \eqref{p_bound}, estimate \eqref{C1-C2} would follow from the above if $\|(a_{11})^{-1}\|\leq\frac{C}{|\ln\epsilon|}$. However $\|(a_{11})^{-1}\|\geq\frac{1}{C}>0$. We need to make more delicate estimate as below.

In view of the symmetry of $a_{11}^{\alpha\beta}$, we write it as a block matrix
$$
a_{11}=
\begin{pmatrix}
  A & B ~\\\\
  C & D ~\\
\end{pmatrix},
$$
where
$$A=\begin{pmatrix}
      a^{11}_{11} & a_{11}^{12} & a_{11}^{13} ~\\\\
      a_{11}^{21} & a_{11}^{22} & a_{11}^{23} ~\\\\
      a_{11}^{31} & a_{11}^{32} & a_{11}^{33} ~\\
    \end{pmatrix},
\quad
B=\begin{pmatrix}
      a_{11}^{14} & a_{11}^{15} & a_{11}^{16} ~\\\\
      a_{11}^{24} & a_{11}^{25} & a_{11}^{26} ~\\\\
      a_{11}^{34} & a_{11}^{35} & a_{11}^{36} ~\\
    \end{pmatrix},\quad\mbox{and}~
D=\begin{pmatrix}
      a_{11}^{44} & a_{11}^{45} & a_{11}^{46} ~\\\\
      a_{11}^{54} & a_{11}^{55} & a_{11}^{56} ~\\\\
      a_{11}^{64} & a_{11}^{65} & a_{11}^{66} ~\\
    \end{pmatrix}.
$$

\begin{lemma}\label{lem_a_11}
$a_{11}$ is positive definite, and
\begin{equation}\label{a11_1122}
\frac{|\ln\epsilon|}{C}\leq\,a_{11}^{\alpha\alpha}\leq\,C|\ln\epsilon|,\qquad\alpha=1,2,3;
\end{equation}
\begin{equation}\label{a11_456}
\frac{1}{C}\leq\,a_{11}^{\alpha\alpha}\leq\,C,\quad\alpha=4,5,6;
\end{equation}
and
\begin{equation}\label{a11_12}
\left|a_{11}^{\alpha\beta}\right|=\left|a_{11}^{\beta\alpha}\right|\leq\,C,\qquad\alpha,\beta=1,2,\cdots,6,~\alpha\neq\beta.
\end{equation}
Moreover,
\begin{equation}\label{a11_det}
\frac{1}{C}I\leq\,D\leq\,CI,
\end{equation}
where $I$ is the $3\times3$ identity matrix, and
\begin{equation}\label{a11_det}
\frac{|\ln\epsilon|^{3}}{C}\leq\det{a_{11}}\leq\,C\,|\ln\epsilon|^{3}.
\end{equation}
\end{lemma}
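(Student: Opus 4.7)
\medskip

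\noindent\textbf{Proof proposal.} The plan is to rewrite every entry as the energy bilinear form
\[
a_{11}^{\alpha\beta}=\int_{\widetilde\Omega}\bigl(\mathbb{C}^{0}e(v_{1}^{\alpha}),\,e(v_{1}^{\beta})\bigr)\,dx,
\]
and to read off all bounds from the decomposition $v_{1}^{\alpha}=\bar u_{1}^{\alpha}+w_{1}^{\alpha}$ supplied by Propositions \ref{prop1} and \ref{prop3}. Positive definiteness is immediate: given $\xi\in\mathbb{R}^{6}\setminus\{0\}$, set $V=\sum_{\alpha}\xi^{\alpha}v_{1}^{\alpha}$, so $\xi^{T}a_{11}\xi=\int(\mathbb{C}^{0}e(V),e(V))\,dx\ge0$ by \eqref{coeff5}; equality combined with Korn's inequality forces $V\equiv0$ on $\widetilde\Omega$, but $V|_{\partial D_{1}}=\sum_{\alpha}\xi^{\alpha}\psi^{\alpha}$ which vanishes only if $\xi=0$ by Lemma \ref{lem_xi}.

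The diagonal estimates come from $|e(v_{1}^{\alpha})|^{2}\le2|e(\bar u_{1}^{\alpha})|^{2}+2|e(w_{1}^{\alpha})|^{2}$ together with $\int|\nabla w_{1}^{\alpha}|^{2}\le C$ from \eqref{energy_w}, \eqref{lem31.02}. For $\alpha\in\{1,2,3\}$ the integrand $|\nabla\bar u_{1}^{\alpha}|^{2}$ is controlled by $C(\epsilon+|x'|^{2})^{-2}$; integrating over $\Omega_{R}$ after accounting for the vertical width $\epsilon+h_{1}-h_{2}\asymp\epsilon+|x'|^{2}$ produces $\int_{|x'|<R}(\epsilon+|x'|^{2})^{-1}\,dx'\asymp|\ln\epsilon|$, yielding the upper bound; for the lower bound one observes the pointwise inequality $(\mathbb{C}^{0}e(\bar u_{1}^{\alpha}),e(\bar u_{1}^{\alpha}))\ge\mu(\partial_{x_{3}}\bar u)^{2}\ge\mu/C(\epsilon+|x'|^{2})^{2}$ on $\Omega_{R}$, whose integral also has magnitude $|\ln\epsilon|$, and absorbs the $O(1)$ contribution of $w_{1}^{\alpha}$ via the triangle inequality. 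For $\alpha\in\{4,5,6\}$ the same computations, with $|\nabla\bar u_{1}^{\alpha}|^{2}\le C|x'|^{2}(\epsilon+|x'|^{2})^{-2}+C$, give the upper bound $a_{11}^{\alpha\alpha}\le C$; the lower bound $a_{11}^{\alpha\alpha}\ge1/C$ is the $H^{1/2}$-trace inequality, $\|\nabla v_{1}^{\alpha}\|_{L^{2}(\widetilde\Omega)}^{2}\gtrsim\|\psi^{\alpha}\|_{H^{1/2}(\partial D_{1})}^{2}\ge1/C$.

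The off-diagonal bound \eqref{a11_12} is the main technical point. Expanding $a_{11}^{\alpha\beta}$ with $\alpha\neq\beta$ produces four terms. The pure explicit term $\int(\mathbb{C}^{0}e(\bar u_{1}^{\alpha}),e(\bar u_{1}^{\beta}))\,dx$ is $O(1)$ because, when $\psi^{\alpha}=e_{\alpha}$, the dominant component of $e(\bar u e_{\alpha})$ is $\tfrac12\partial_{x_{3}}\bar u$ in the $(\alpha,3)$ and $(3,\alpha)$ slots; for distinct $\alpha,\beta$ these slots are disjoint, so the $(\partial_{x_{3}}\bar u)^{2}$-contribution cancels and only $\partial_{x_{k}}\bar u\,\partial_{x_{l}}\bar u$ with $k,l\in\{1,2\}$ survive, whose integral is bounded by $\int|x'|^{2}(\epsilon+|x'|^{2})^{-1}\,dx'<\infty$; an analogous calculation handles the mixed rotation and rotation/translation cases. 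The three cross terms involving $w$ are rewritten via integration by parts, using $w_{1}^{\alpha}=0$ on $\partial\widetilde\Omega$, as $-\int w_{1}^{\alpha}\cdot\mathcal{L}_{\lambda,\mu}\bar u_{1}^{\beta}\,dx$; combining the pointwise bound $|\mathcal{L}_{\lambda,\mu}\bar u_{1}^{\beta}|\le C(\epsilon+|x'|^{2})^{-1}+C|x'|(\epsilon+|x'|^{2})^{-2}$ from \eqref{L_ubar_ialpha} with the pointwise estimate $|w_{1}^{\alpha}(x)|\le C(\sqrt\epsilon+|x'|)$ (obtained from $w_{1}^{\alpha}|_{\partial\widetilde\Omega}=0$ and the vertical derivative bounds of Propositions \ref{prop1} and \ref{prop3}), the integral converges to $O(1)$. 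The fourth term $\int(\mathbb{C}^{0}e(w_{1}^{\alpha}),e(w_{1}^{\beta}))$ is $O(1)$ by Cauchy--Schwarz. I expect this step to be the main obstacle, because the crude Cauchy--Schwarz bound on the cross terms gives only $O(|\ln\epsilon|^{1/2})$, and extracting the sharp $O(1)$ bound genuinely requires both the integration by parts and the pointwise decay of $w_{1}^{\alpha}$.

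With \eqref{a11_1122}--\eqref{a11_12} in hand, the block estimate $\tfrac1C I\le D\le CI$ follows from upper bounds on $a_{11}^{\alpha\beta}$ with $\alpha,\beta\in\{4,5,6\}$ and from the quadratic form identity $\xi^{T}D\xi=\int(\mathbb{C}^{0}e(V_{\xi}),e(V_{\xi}))$ together with the trace inequality applied to $V_{\xi}|_{\partial D_{1}}=\sum_{\alpha=4}^{6}\xi^{\alpha}\psi^{\alpha}$, whose $H^{1/2}$-norm is comparable to $|\xi|$ since the rotations $\psi^{4},\psi^{5},\psi^{6}$ are linearly independent on $\partial D_{1}$. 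For the determinant estimate, write $a_{11}$ in block form and use the Schur complement $\det a_{11}=\det D\cdot\det(A-BD^{-1}B^{T})$. The $3\times3$ matrix $M:=A-BD^{-1}B^{T}$ has $M_{\alpha\alpha}\asymp|\ln\epsilon|$ and $|M_{\alpha\beta}|\le C$ for $\alpha\neq\beta$ (since $B,D^{-1}=O(1)$), so factoring $M=\mathrm{diag}(M_{\alpha\alpha})\bigl(I+\mathrm{diag}(M_{\alpha\alpha})^{-1}N\bigr)$ with $\|\mathrm{diag}(M_{\alpha\alpha})^{-1}N\|=O(|\ln\epsilon|^{-1})$ yields $\det M=\prod_{\alpha}M_{\alpha\alpha}\cdot(1+O(|\ln\epsilon|^{-1}))\asymp|\ln\epsilon|^{3}$. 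Combined with $\det D\asymp1$, this gives \eqref{a11_det}; the linear-algebra step is the content of Lemma \ref{lem_matrix} in the appendix.
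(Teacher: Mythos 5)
Your proposal is correct, and in several places it departs genuinely from the paper's proof, so a comparison is worthwhile.

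For positive definiteness and the upper bounds you follow essentially the paper's route. For the lower bound of $a_{11}^{\alpha\alpha}$, $\alpha=1,2,3$, the paper avoids the triangle inequality entirely: it observes that $(v_{1}^{\alpha})_{\alpha}$ equals $1$ on $\partial D_{1}\cap B_{R}$ and $0$ on $\partial D_{2}\cap B_{R}$, and that for fixed $x'$ the affine-in-$x_{3}$ function $\bar u$ minimizes the one-dimensional Dirichlet energy $\int|\partial_{x_{3}}\cdot|^{2}dx_{3}$ among functions with those boundary values, giving $\int_{\Omega_{R}}|\partial_{x_{3}}(v_{1}^{\alpha})_{\alpha}|^{2}\geq\int_{\Omega_{R}}|\partial_{x_{3}}\bar u|^{2}\asymp|\ln\epsilon|$ directly, with no need to absorb the $O(1)$ contribution from $w$. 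Your triangle-inequality version is also valid (since $\|e(\bar u_{1}^{\alpha})\|_{L^{2}}^{2}\asymp|\ln\epsilon|$ dominates $\|e(w_{1}^{\alpha})\|_{L^{2}}^{2}=O(1)$ for $\epsilon$ small), but the variational comparison is cleaner. For the lower bounds $a_{11}^{\alpha\alpha}\geq 1/C$, $\alpha=4,5,6$, and $D\geq\frac{1}{C}I$, the paper uses a compactness/contradiction argument (pass to a weak $H^{1}$ limit as $\epsilon\to0$, combine Korn with Lemma \ref{lem_xi}), while you invoke the trace inequality from $H^{1}(\widetilde\Omega)$ to $H^{1/2}(\partial D_{1})$ plus Korn. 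Your route is more direct, though it implicitly requires that the trace and Korn constants are uniform in $\epsilon$; this is true because $\partial D_{1}$ and the region away from the gap do not degenerate, but it is worth stating.

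The largest departure is in the off-diagonal estimate \eqref{a11_12}. The paper converts each entry to a \emph{boundary} integral, $a_{11}^{\alpha\beta}=-\int_{\partial D_{1}}\frac{\partial v_{1}^{\alpha}}{\partial\nu_{0}}\big|_{+}\cdot\psi^{\beta}\,dS$, and then exploits that the normal $\vec n$ on $\partial D_{1}\cap B_{R}$ has $|n_{1}|,|n_{2}|\leq C|x'|$ to tame the worst derivative $\partial_{x_{3}}v_{1}^{\alpha}\sim(\epsilon+|x'|^{2})^{-1}$, pairing it with the $O(|x'|)$ factor and the tangential-derivative bounds \eqref{nablax'v1}, plus the $O(1)$ bound on $\partial_{x_{3}}$ of the off-diagonal \emph{components} coming from the decomposition $v=\bar u+w$. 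You instead stay with the \emph{interior} bilinear form, split $v_{1}^{\alpha}=\bar u_{1}^{\alpha}+w_{1}^{\alpha}$, observe that for $\alpha\neq\beta$ the singular product $(\partial_{x_{3}}\bar u)^{2}$ never appears in $(\mathbb{C}^{0}e(\bar u_{1}^{\alpha}),e(\bar u_{1}^{\beta}))$, and handle the cross terms $\int(\mathbb{C}^{0}e(\bar u_{1}^{\beta}),e(w_{1}^{\alpha}))$ by integrating by parts (using $w=0$ on $\partial\widetilde\Omega$) and the pointwise decay $|w_{1}^{\alpha}(x)|\leq C(\sqrt\epsilon+|x'|)$, which indeed follows from $w|_{\partial D_{1}}=0$, the gap width $\asymp\epsilon+|x'|^{2}$, and the $L^{\infty}$ gradient bounds in Propositions \ref{prop1} and \ref{prop3}. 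Both approaches yield $O(1)$; you correctly identify that naive Cauchy--Schwarz only gives $O(|\ln\epsilon|^{1/2})$ and that the IBP + pointwise decay is the essential extra input. One small remark: your informal "slot disjointness" argument for the $(\bar u,\bar u)$ term should also account for the $\lambda\,\mathrm{tr}\cdot\mathrm{tr}$ part of $\mathbb{C}^{0}$ (when $\alpha=3$ one of the traces is $\partial_{x_{3}}\bar u$), but the resulting integrand is $O(|x'|/(\epsilon+|x'|^{2})^{2})$ and still integrates, with the thickness factor, to $O(1)$. Finally, your Schur-complement derivation of $\det a_{11}\asymp|\ln\epsilon|^{3}$ makes explicit what the paper states summarily; both agree in substance.
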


\begin{remark}
Roughly speaking, the estimates of $A$ and $B$ in Lemma \ref{lem_a_11} is that, for some positive constants $c_{1},c_{2},c_{3}$, independent of $\epsilon$,
$$A\sim
\begin{pmatrix}
  ~c_{1}|\ln\epsilon| & O(1) & O(1) ~\\\\
  O(1) & c_{2}|\ln\epsilon| & O(1) \\\\
 O(1) & O(1) & c_{3}|\ln\epsilon|~ \\
\end{pmatrix},
\quad\mbox{and}\quad\,B=
\begin{pmatrix}
  ~O(1) & O(1) & O(1) \\\\
  O(1) & O(1) & O(1) \\\\
  O(1) & O(1) & O(1)~ \\
\end{pmatrix}.
$$
\end{remark}

We postpone the proof of Lemma \ref{lem_a_11} and first make use of it to prove \eqref{C1-C2}.

\begin{proof}[Proof of \eqref{C1-C2}]

For convenience, we introduce notations
$$X_{1}:=\Big(C_{1}^{1}-C_{2}^{1},C_{1}^{2}-C_{2}^{2},C_{1}^{3}-C_{2}^{3}\Big)^{T},\quad
X_{2}:=\Big(C_{1}^{4}-C_{2}^{4},C_{1}^{5}-C_{2}^{5},C_{1}^{6}-C_{2}^{6}\Big)^{T},$$
and
$$P_{1}:=(p_{1},p_{2},p_{3})^{T},\quad\,P_{2}:=(p_{4},p_{5},p_{6})^{T}.$$
Now \eqref{C1C2_4} can be rewritten as
\begin{equation}\label{axp}
\begin{pmatrix}
  A & B ~\\\\
  C & D~ \\
\end{pmatrix}
\begin{pmatrix}
 ~ X_{1}~ \\\\
  ~X_{2} ~\\
\end{pmatrix}
=\begin{pmatrix}
 ~ P_{1}~ \\\\
 ~ P_{2} ~\\
\end{pmatrix}.
\end{equation}
By Lemma \ref{lem_ap} and Lemma \ref{lem_a_11}, matrices $A,B,D$
satisfy the assumptions of Lemma \ref{lem_matrix} in Appendix with $m=3$, $\gamma=|\ln\epsilon|$ and $\theta=\frac{1}{C}$. Applying Lemma \ref{lem_matrix}, we have
$$\begin{pmatrix}
  A & B ~\\\\
  C & D~ \\
\end{pmatrix}^{-1}
=\begin{pmatrix}
  A^{-1}+O(\frac{1}{|\ln\epsilon|^{2}}) & O(\frac{1}{|\ln\epsilon|}) ~\\\\
  * & *~ \\
\end{pmatrix}.$$
It follows from \eqref{axp} that
$$|X_{1}|=\sqrt{\sum_{\alpha=1}^{3}\left|C_{1}^{\alpha}-C_{2}^{\alpha}\right|}\leq\,\frac{C}{|\ln\epsilon|}.$$
Thus, the proof of Proposition \ref{prop_C1-C2} is completed.
\end{proof}

We are now in position to complete the proof of Proposition \ref{prop_gradient}.

\begin{proof}[Proof of Proposition \ref{prop_gradient}]
Estimates \eqref{mainev0}-\eqref{mainev1+23}  have been proved
in Lemma \ref{lem_v3_v1+v2}. Under assumption \eqref{omega},
$$\frac{1}{C}(\epsilon+|x'|^{2})\leq\epsilon+\mathrm{dist}^{2}(x,\overline{P_{1}P_{2}})\leq\,C
(\epsilon+|x'|^{2}),\quad~x\in\Omega_{R}.$$
Estimate \eqref{mainev1} in $\Omega_{R}$ follows from \eqref{nabla_w_ialpha} and \eqref{nablau_123}. Thus, using \eqref{nablavialpha_outomega1/2}, \eqref{mainev1} is proved.
Combining \eqref{nabla_v13} and
\eqref{nablavi3_outomega1/2} yields estimate \eqref{mainevi3}. Estimate \eqref{maineC}  and
estimate \eqref{maineC1-C2}  has been proved in
Proposition \ref{prop_C1-C2}.
 The proof of Proposition \ref{prop_gradient} is finished.
\end{proof}

\begin{proof}[Proof of Lemma \ref{lem_a_11}]

\noindent{\bf STEP 1.} Proof of \eqref{a11_1122} and \eqref{a11_456}.

For any
$\xi=(\xi_{1},\xi_{2},\cdots,\xi_{6})^{T}\neq0$, by \eqref{coeff5},
\begin{align}\label{positiveofa}
\xi^{T}a_{11}\xi&=\int_{\widetilde{\Omega}}\left(\mathbb{C}^{0}e\left(\xi_{\alpha}v_{1}^{\alpha}\right),
e\left(\xi_{\beta}v_{1}^{\beta}\right)\right)dx
\geq\frac{1}{C}\int_{\widetilde{\Omega}}\left|e\left(\xi_{\alpha}v_{1}^{\alpha}\right)\right|^{2}dx>0.
\end{align}
In the last inequality we have used the fact that $e\left(\xi_{\alpha}v_{1}^{\alpha}\right)$ is not identically zero in $\widetilde{\Omega}$. Indeed, if $e\left(\xi_{\alpha}v_{1}^{\alpha}\right)=0$  in $\widetilde{\Omega}$, then $\sum_{\alpha=1}^{6}\xi_{\alpha}v_{1}^{\alpha}=\sum_{i=1}^{6}a_{i}\psi^{i}$  in $\widetilde{\Omega}$, for some constants $a_{i}$, $i=1,2,\cdots,6$. On the other hand, $\sum_{i=1}^{6}\xi_{\alpha}v_{1}^{\alpha}=0$ on $\partial{D}_{2}$, so by Lemma \ref{lem_xi}, $a_{i}=0$, $\forall~i$. Thus on $\partial{D}_{1}$, $\sum_{i=1}^{6}\xi_{\alpha}v_{1}^{\alpha}=\sum_{i=1}^{6}\xi_{\alpha}\psi^{\alpha}=0$, which implies, again using Lemma \ref{lem_xi}, that $\xi=0$. A contradiction. \eqref{positiveofa} implies that $a_{11}$ is positive definite.

By \eqref{coeff2} and \eqref{mainev1}, we have
$$a_{11}^{\alpha\alpha}=\int_{\widetilde{\Omega}}\left(\mathbb{C}^{0}e\left(v_{1}^{\alpha}\right),e\left(v_{1}^{\alpha}\right)\right)dx
\leq\,C\int_{\widetilde{\Omega}}\left|\nabla{v}_{1}^{\alpha}\right|^{2}dx\leq\,C|\ln\epsilon|,\quad\,\alpha=1,2,3.$$
By \eqref{coeff2} agian, we have
$$a_{11}^{\alpha\alpha}=\int_{\widetilde{\Omega}}\left(\mathbb{C}^{0}e\left(v_{1}^{\alpha}\right),e\left(v_{1}^{\alpha}\right)\right)dx
\geq\frac{1}{C}\int_{\widetilde{\Omega}}\left|e\left(v_{1}^{\alpha}\right)\right|^{2}dx
\geq\frac{1}{C}\int_{\Omega_{R}}\left|\partial_{x_{3}}(v_{1}^{\alpha})_{\alpha}\right|^{2}dx,\quad\alpha=1,2,3.$$
Notice that $(v_{1}^{\alpha})_{\alpha}|_{\partial{D}_{1}}=\bar{u}|_{\partial{D}_{1}}=1$, $(v_{1}^{\alpha})_{\alpha}|_{\partial{D}_{2}}=\bar{u}|_{\partial{D}_{2}}=0$, and recalling the definition of $\bar{u}$, \eqref{ubar}, $\bar{u}(x',x_{3})$ is linear in $x_{3}$ for fixed $x'$, so $\bar{u}(x',\cdot)$ is harmonic, hence its energy is minimal, that is,
$$
\int_{h_{2}(x')-\frac{\epsilon}{2}}^{h_{1}(x')+\frac{\epsilon}{2}}\left|\partial_{x_{3}}(v_{1}^{\alpha})_{\alpha}\right|^{2}dx_{3}
\geq\int_{h_{2}(x')-\frac{\epsilon}{2}}^{h_{1}(x')+\frac{\epsilon}{2}}|\partial_{x_{3}}\bar{u}|^{2}dx_{3}
=\frac{1}{\epsilon+h_{1}(x')-h_{2}(x')}.$$
Integrating on $|x'|<R$, we obtain
\begin{align*}
\int_{\Omega_{R}}\left|\partial_{x_{3}}(v_{1}^{\alpha})_{\alpha}\right|^{2}dx
&=\int_{|x'|<R}\int_{h_{2}(x')-\frac{\epsilon}{2}}^{h_{1}(x')+\frac{\epsilon}{2}}
\left|\partial_{x_{3}}(v_{1}^{\alpha})_{\alpha}\right|^{2}dx_{3}dx_{2}dx_{1}\\
&\geq\frac{1}{C}\int_{|x'|<R}\frac{dx'}{\epsilon+|x'|^{2}}
\geq\frac{|\ln\epsilon|}{C}.
\end{align*}
Thus,
\begin{equation}\label{a11_lowbound}
a_{11}^{\alpha\alpha}\geq\frac{|\ln\epsilon|}{C},\quad\alpha=1,2,3.
\end{equation}
Estimate \eqref{a11_1122} is proved.

By \eqref{lem31.02}, we have
$$a_{11}^{\alpha\alpha}=\int_{\widetilde{\Omega}}\left(\mathbb{C}^{0}e\left(v_{1}^{\alpha}\right),e\left(v_{1}^{\alpha}\right)\right)dx\leq\,C,\quad\alpha=4,5,6.$$
By the same argument, the claim (4.18) in \cite{bll} for higher dimensions still holds. Therefore
$$a_{11}^{\alpha\alpha}\geq\frac{1}{C}\int_{\Omega_{R}\setminus\Omega_{R}}|e\left(v_{1}^{\alpha}\right)|^{2}dx
\geq\frac{1}{C}\int_{\Omega_{R}\setminus\Omega_{R}}|\nabla{v}_{1}^{\alpha}|^{2}dx
\geq\frac{1}{C},\quad\alpha=4,5,6.
$$
Estimate \eqref{a11_456} is proved.

\noindent{\bf STEP 2.} We deal with the cases $\alpha\neq\beta$. Proof of \eqref{a11_12}.

By definition,
$$a_{11}^{\alpha\beta}=a_{11}^{\beta\alpha}=\int_{\widetilde{\Omega}}\left(\mathbb{C}^{0}e\left(v_{1}^{\alpha}\right),e(v_{1}^{\beta})\right)dx
=-\int_{\partial{D}_{1}}\frac{\partial{v}_{1}^{\alpha}}{\partial\nu_{0}}\bigg|_{+}\cdot\psi^{\beta}\,dS.$$
First,
\begin{align*}
a_{11}^{12}&=-\int_{\partial{D}_{1}}\frac{\partial{v}_{1}^{1}}{\partial\nu_{0}}\bigg|_{+}\cdot\psi^{2}\,dS\\
&=-\int_{\partial{D}_{1}}\left(\lambda\Big(\nabla\cdot{v}_{1}^{1}\Big)n_{2}
+\mu\bigg(\Big(\nabla{v}_{1}^{1}+(\nabla{v}_{1}^{1})^{T}\Big)\vec{n}\bigg)_{2}\right)\,dS\\
&=-\int_{\partial{D}_{1}}\left(\lambda\Big(\sum_{i=1}^{3}\partial_{x_{k}}({v}_{1}^{1})_{k}\Big)n_{2}
+\mu\sum_{l=1}^{3}\Big(\partial_{x_{2}}({v}_{1}^{1})_{l}+\partial_{x_{l}}({v}_{1}^{1})_{2}\Big)n_{l}\right)\,dS.
\end{align*}
We only need to estimate the integral on the part $\partial{D}_{1}\cap{B}_{R}$, because the rest is bounded. On boundary $\partial{D}_{1}\cap{B}_{R}$, we have $$\vec{n}=\frac{1}{\sqrt{1+|\nabla_{x'}h_{1}|^{2}}}\Big(-\partial_{x_{1}}h_{1},-\partial_{x_{2}}h_{1},1\Big).$$
Clearly, using \eqref{h1h20}-\eqref{h1h2},
$$|n_{1}|\leq\,C|x'|,\quad|n_{2}|\leq\,C|x'|,\quad\,n_{3}=\frac{1}{\sqrt{1+|\nabla_{x'}h_{1}|^{2}}}.$$
Combining with the estimates \eqref{nablav1}, we have
$$\int_{\partial{D}_{1}\cap{B}_{R}}\Big|~\Big(\sum_{i=1}^{3}\partial_{x_{k}}({v}_{1}^{1})_{k}\Big)n_{2}~\Big|\,dS\leq
\int_{\partial{D}_{1}\cap{B}_{R}}\frac{C|x'|}{\epsilon+|x'|^{2}}\,dS\leq\,C.$$
Using the definition of $\bar{u}_{1}^{1}$, estimates \eqref{nabla_w_ialpha}, \eqref{nablav1}, \eqref{nablax'v1}, and
$$\Big|\int_{\partial{D}_{1}\cap{B}_{R}}\partial_{x_{3}}({v}_{1}^{1})_{2}n_{3}\,dS\Big|\leq
\Big|\int_{\partial{D}_{1}\cap{B}_{R}}\partial_{x_{3}}(\bar{u}_{1}^{1})_{2}n_{3}\,dS\Big|
+\Big|\int_{\partial{D}_{1}\cap{B}_{R}}\partial_{x_{3}}({w}_{1}^{1})_{2}n_{3}\,dS\Big|\leq\,C,$$
we obtain
$$\int_{\partial{D}_{1}\cap{B}_{R}}\Big|~\sum_{l=1}^{3}\partial_{x_{2}}({v}_{1}^{1})_{l}n_{l}
+\sum_{l=1}^{3}\partial_{x_{l}}({v}_{1}^{1})_{2}n_{l}~\Big|\,dS\leq\,C.$$
Therefore
$$\left|a_{11}^{12}\right|=\left|a_{11}^{21}\right|\leq\,C.$$

By the same way
\begin{align*}
a_{11}^{13}&=-\int_{\partial{D}_{1}}\left(\lambda\Big(\nabla\cdot{v}_{1}^{1}\Big)n_{3}
+\mu\bigg(\Big(\nabla{v}_{1}^{1}+(\nabla{v}_{1}^{1})^{T}\Big)\vec{n}\bigg)_{3}\right)\,dS\\
&=-\int_{\partial{D}_{1}}\left(\lambda\Big(\sum_{i=1}^{3}\partial_{x_{k}}({v}_{1}^{1})_{k}\Big)n_{3}
+\mu\sum_{l=1}^{3}\Big(\partial_{x_{3}}({v}_{1}^{1})_{l}+\partial_{x_{l}}({v}_{1}^{1})_{3}\Big)n_{l}\right)\,dS.
\end{align*}
For the terms $\partial_{x_{k}}({v}_{1}^{1})_{l}$, $k=1,2,l=1,2,3$, use the estimates \eqref{nablax'v1}, for $k=l=3$, use the definition $\bar{u}_{1}^{1}$ and the estimates \eqref{nabla_w_ialpha} to obtain
$$\Big|\int_{\partial{D}_{1}}\partial_{x_{3}}({v}_{1}^{1})_{3}n_{3}\,dS\Big|=
\Big|\int_{\partial{D}_{1}}\partial_{x_{3}}(\bar{u}_{1}^{1})_{3}n_{3}\,dS\Big|
+\Big|\int_{\partial{D}_{1}}\partial_{x_{3}}({w}_{1}^{1})_{3}n_{3}\,dS\Big|\leq\,C.$$
Therefore
$$\left|a_{11}^{13}\right|=\left|a_{11}^{31}\right|\leq\,C.$$

By the definition and the same reason,
\begin{align*}
a_{11}^{14}=&-\int_{\partial{D}_{1}}\left(\lambda\Big(\nabla\cdot{v}_{1}^{1}\Big)\vec{n}
+\mu\Big(\nabla{v}_{1}^{1}+(\nabla{v}_{1}^{1})^{T}\Big)\vec{n}\right)\cdot
\begin{pmatrix}
x_{2} \\
-x_{1} \\
0 \\
\end{pmatrix}\,dS
\\
=&-\int_{\partial{D}_{1}}\left(\lambda\Big(\sum_{i=1}^{3}\partial_{x_{k}}({v}_{1}^{1})_{k}\Big)n_{1}
+\mu\sum_{l=1}^{3}\Big(\partial_{x_{1}}({v}_{1}^{1})_{l}+\partial_{x_{l}}({v}_{1}^{1})_{1}\Big)n_{l}\right)x_{2}\,dS\\
&+\int_{\partial{D}_{1}}\left(\lambda\Big(\sum_{i=1}^{3}\partial_{x_{k}}({v}_{1}^{1})_{k}\Big)n_{2}
+\mu\sum_{l=1}^{3}\Big(\partial_{x_{2}}({v}_{1}^{1})_{l}+\partial_{x_{l}}({v}_{1}^{1})_{2}\Big)n_{l}\right)x_{1}\,dS
\end{align*}
is bounded. $a_{11}^{15}=a_{11}^{51}$ and $a_{11}^{16}=a_{11}^{61}$ are also bounded, essentially the same as above.

\begin{align*}
a_{11}^{23}&=-\int_{\partial{D}_{1}}\left(\lambda\Big(\nabla\cdot{v}_{1}^{2}\Big)n_{3}
+\mu\bigg(\Big(\nabla{v}_{1}^{1}+(\nabla{v}_{1}^{2})^{T}\Big)\vec{n}\bigg)_{3}\right)\,dS\\
&=-\int_{\partial{D}_{1}}\left(\lambda\Big(\sum_{i=1}^{3}\partial_{x_{k}}({v}_{1}^{2})_{k}\Big)n_{3}
+\mu\sum_{l=1}^{3}\Big(\partial_{x_{3}}({v}_{1}^{2})_{l}+\partial_{x_{l}}({v}_{1}^{2})_{3}\Big)n_{l}\right)\,dS
\end{align*}
is the same as $a_{11}^{12}$. While $a_{11}^{24}=a_{11}^{42}$ and $a_{11}^{34}=a_{11}^{43}$ are the same as $a_{11}^{14}$. $a_{11}^{25}=a_{11}^{52}$ and $a_{11}^{26}=a_{11}^{62}$,  $a_{11}^{35}=a_{11}^{53}$ and $a_{11}^{36}=a_{11}^{63}$ are all the same.

\begin{align*}
a_{11}^{45}=&-\int_{\partial{D}_{1}}\left(\lambda\Big(\nabla\cdot{v}_{1}^{4}\Big)\vec{n}
+\mu\Big(\nabla{v}_{1}^{1}+(\nabla{v}_{1}^{1})^{T}\Big)\vec{n}\right)\cdot
\begin{pmatrix}
x_{3} \\
0 \\
-x_{1} \\
\end{pmatrix}\,dS
\\
=&-\int_{\partial{D}_{1}}\left(\lambda\Big(\sum_{i=1}^{3}\partial_{x_{k}}({v}_{1}^{4})_{k}\Big)n_{1}
+\mu\sum_{l=1}^{3}\Big(\partial_{x_{1}}({v}_{1}^{4})_{l}+\partial_{x_{l}}({v}_{1}^{4})_{1}\Big)n_{l}\right)x_{3}\,dS\\
&+\int_{\partial{D}_{1}}\left(\lambda\Big(\sum_{i=1}^{3}\partial_{x_{k}}({v}_{1}^{4})_{k}\Big)n_{3}
+\mu\sum_{l=1}^{3}\Big(\partial_{x_{2}}({v}_{1}^{4})_{l}+\partial_{x_{l}}({v}_{1}^{4})_{3}\Big)n_{l}\right)x_{1}\,dS
\end{align*}
is much better. $a_{11}^{56}=a_{11}^{65}$ is the same better. Estimate \eqref{a11_12} is proved.

\noindent{\bf STEP 3.} We will show $$D\geq\frac{1}{C}I$$ for some constant $C$, independent of $\epsilon$.

For $\xi\in\mathbb{R}^{3}$, $|\xi|=1$, using \eqref{coeff4_strongelyconvex}, we have
$$\sum_{\alpha,\beta=4,5,6}a_{11}^{\alpha\beta}\xi_{\alpha}\xi_{\beta}
=\int_{\widetilde{\Omega}}\Big(\mathbb{C}^{0}e\Big(\sum_{\alpha=4}^{6}\xi_{\alpha}v_{1}^{\alpha}\Big),
e\Big(\sum_{\beta=4}^{6}\xi_{\beta}v_{1}^{\beta}\Big)\Big)dx
\geq\frac{1}{C}\int_{\widetilde{\Omega}}\Big|e\Big(\sum_{\alpha=4}^{6}\xi_{\alpha}v_{1}^{\alpha}\Big)\Big|^{2}dx.$$
We claim that there exists a constant $C>0$, independent of $\epsilon$, such that
\begin{equation}
\int_{\widetilde{\Omega}}\Big|e\Big(\sum_{\alpha=4}^{6}\xi_{\alpha}v_{1}^{\alpha}\Big)\Big|^{2}dx\geq\frac{1}{C},
\quad\forall~\xi\in\mathbb{R}^{3},~|\xi|=1.
\end{equation}
Indeed, if not, then there exists $\epsilon_{i}\rightarrow0^{+}$, $|\xi^{i}|=1$, such that
\begin{equation}\label{equ_ifnot}
\int_{\widetilde{\Omega}_{\epsilon_{i}}}\Big|e\Big(\sum_{\alpha=4}^{6}\xi_{\alpha}^{i}v_{1}^{\alpha}\Big)\Big|^{2}dx\rightarrow0.
\end{equation}
Here and in the following proof we use the notations $D_{1}^{*}:=(0,0,-\frac{\epsilon}{2})+D_{1}$, $D_{2}^{*}:=(0,0,\frac{\epsilon}{2})+D_{2}$, $\widetilde{\Omega}^{*}:=\Omega\setminus\overline{D_{1}^{*}\cup{D}_{2}^{*}}$, and $\widetilde{\Omega}_{\epsilon}=\Omega\setminus\overline{D_{1}\cup{D}_{2}}$. The corresponding solution of \eqref{v1alpha} with $\alpha=4,5,6$ is denoted as $v_{1}^{\alpha}(\epsilon)$.
Since $v_{1}^{\alpha}(\epsilon_{i})=0$ on $\partial{D}_{2}$, it follows from the second Korn's inequality (see theorem 2.5 in \cite{osy}) that there exists a constant $C$, independent of $\epsilon$, such that
$$\|v_{1}^{\alpha}(\epsilon_{i})\|_{H^{1}(\widetilde{\Omega}_{\epsilon_{i}}\setminus{B}_{\bar{r}})}\leq\,C,$$
where $\bar{r}>0$ is fixed. Then there exists a subsequence, we still denote $\{v_{1}^{\alpha}(\epsilon_{i})\}$, such that
$$v_{1}^{\alpha}(\epsilon_{i})\rightharpoonup\bar{v}_{1}^{\alpha},
\quad\mbox{in}~H^{1}(\widetilde{\Omega}_{\epsilon_{i}}\setminus{B}_{\bar{r}}),\quad\mbox{as}~\epsilon_{i}\rightarrow0.$$
By the assumption \eqref{equ_ifnot}, there exists $\bar{\xi}$ such that
$$\xi^{i}\rightarrow\bar{\xi},\quad\mbox{as}~\epsilon_{i}\rightarrow0,\quad\mbox{with}~~|\bar{\xi}|=1,$$
and
$$\int_{\widetilde{\Omega}^{*}}\Big|e\Big(\sum_{\alpha=4}^{6}\bar{\xi}_{\alpha}\bar{v}_{1}^{\alpha}\Big)\Big|^{2}=0.$$
This implies that
$$e\Big(\sum_{\alpha=4}^{6}\bar{\xi}_{\alpha}\bar{v}_{1}^{\alpha}\Big)=0,\quad\mbox{in}~\widetilde{\Omega}^{*}.$$
Hence, for some constants $\{b_{\beta}\}$,
$\sum_{\alpha=4}^{6}\bar{\xi}_{\alpha}\bar{v}_{1}^{\alpha}=\sum_{\beta=1}^{6}b_{\beta}\psi^{\beta}$ in $\widetilde{\Omega}^{*}$.
Since
$\sum_{\beta=1}^{6}b_{\beta}\psi^{\beta}=0$, on $\partial{D}_{2}^{*}$,
it follows from Lemma \ref{lem_xi} that $b_{\beta}=0$, $\forall~\beta$. Thus,
$\sum_{\alpha=4}^{6}\bar{\xi}_{\alpha}\bar{v}_{1}^{\alpha}=0$ in $\widetilde{\Omega}^{*}$.
Restricted on $\partial{D}_{1}^{*}$, it says that
$\sum_{\alpha=4}^{6}\bar{\xi}_{\alpha}\psi^{\alpha}=0$ on $\partial{D}_{1}^{*}$.
This yields, using again Lemma \ref{lem_xi}, $\bar{\xi}_{\alpha}=0$, $\alpha=4,5,6$, which contradicts with $|\bar{\xi}|=1$.

\eqref{a11_det} is immediately proved by using \eqref{a11_1122} and \eqref{a11_12}. The proof of Lemma \ref{lem_a_11} is finished.
\end{proof}

\section{The proof of Theorem \ref{mainthm2}}\label{sec_higherdimensions}

Define $v_{i}^{\alpha}$ and $v_{0}$ by \eqref{v1alpha} and \eqref{v3}. By a decomposition similar to \eqref{decom_u},
\begin{equation}\label{nablau_dec4}
\nabla{u}=\sum_{i=1}^{2}\sum_{\alpha=1}^{\frac{d(d+1)}{2}}C_{i}^{\alpha}\nabla{v}_{i}^{\alpha}+\nabla{v}_{0},\quad\mbox{in}~\widetilde{\Omega}.
\end{equation}
It follows that
\begin{equation}\label{nablau4}
\left|\nabla{u}\right|
\leq\sum_{i=1}^{2}\sum_{\alpha=1}^{\frac{d(d+1)}{2}}\left|C_{i}^{\alpha}\right|\left|\nabla{v}_{i}^{\alpha}\right|+\big|\nabla{v}_{0}\big|,\quad\mbox{in}~\widetilde{\Omega}.
\end{equation}

As in Section \ref{sec_gradient123}, we write $x=(x',x_{d})$, and let $P_{1},P_{2}, R$ be the same as in Section \ref{sec_gradient123}, and, instead of \eqref{x3},
$$x_{d}=\frac{\epsilon}{2}+h_{1}(x'),\quad\mbox{and}\quad\,x_{d}=-\frac{\epsilon}{2}+h_{2}(x'), \quad\mbox{for}~~|x'|<2R.
$$
$\widehat{\Omega}_{s}(z')$ and $\Omega_{s}=\Omega_{s}(0')$ are defined accordingly. $\bar{u}$, $\underline{u}$ and $\bar{u}_{i}^{\alpha}$ are defined as in \eqref{ubar}, \eqref{u_underline}, \eqref{def:ubar1112} and \eqref{def:ubar2122}, with $x_{3}$ replaced by $x_{d}$, and $\alpha=1,2,\cdots,\frac{d(d+1)}{2}$. We still have \eqref{nablau_bar_outside} and \eqref{nablau_underline_outside}.

\begin{prop}\label{prop5}
Assume the above, let $v_{i}^{\alpha}\in{H}^1(\widetilde{\Omega};\mathbb{R}^{d})$ be the
weak solution of \eqref{v1alpha} with $\alpha=1,2,\cdots,\frac{d(d+1)}{2}$. Then for $i=1,2,~\alpha=1,2,\cdots,\frac{d(d+1)}{2}$,
\begin{equation}\label{energy_w4d}
\int_{\widetilde{\Omega}}\left|\nabla(v_{i}^{\alpha}-\bar{u}_{i}^{\alpha})\right|^{2}dx\leq\,C;
\end{equation}
and 
\begin{equation}\label{nablavialpha_outomega1/24d}
\left\|\nabla{v}_{i}^{\alpha}\right\|_{L^{\infty}(\widetilde{\Omega}\setminus\Omega_{R})}\leq\,C,
\end{equation}

\begin{equation}\label{nabla_w_ialpha4}
\left|\nabla(v_{i}^{\alpha}-\bar{u}_{i}^{\alpha})(x)\right|
\leq\frac{C}{\epsilon+|x'|^{2}},\qquad\forall~~x\in\Omega_{R}.
\end{equation}
Consequently,
\begin{equation}\label{nablav14}
\left|\nabla{v}_{i}^{\alpha}(x)\right|\leq\frac{C}{\epsilon+|x'|^{2}},\qquad\forall~~x\in\Omega_{R}.
\end{equation}
\end{prop}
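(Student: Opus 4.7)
The plan is to mirror the four-step argument of Proposition~\ref{prop1}, treating all $\alpha=1,\ldots,\tfrac{d(d+1)}{2}$ in a unified way. Set $w:=v_i^\alpha-\bar u_i^\alpha$ and $\tilde u:=\bar u_i^\alpha$. Since in $\Omega_R$ each $\psi^\alpha$ and its first derivative are uniformly bounded, the pointwise estimate
$$|\mathcal{L}_{\lambda,\mu}\tilde u(x)|\leq \frac{C}{\epsilon+|x'|^2}+\frac{C|x'|}{(\epsilon+|x'|^2)^2},\qquad x\in\Omega_R,$$
analogous to \eqref{L_ubar_ialpha}, holds uniformly for all $\alpha$ (the rotational basis elements contribute only lower-order corrections of the same size). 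The global $H^1$ bound \eqref{energy_w4d} and the outer $L^\infty$ bound \eqref{nablavialpha_outomega1/24d} are then obtained exactly as in Step~1 of Proposition~\ref{prop1}: multiply the equation $\mathcal{L}_{\lambda,\mu} w=-\mathcal{L}_{\lambda,\mu}\tilde u$ by $w$, integrate by parts, and combine the first Korn inequality with the Poincar\'e and Sobolev trace inequalities on $\widetilde\Omega\setminus\Omega_R$, together with $\int_{\Omega_R}|\nabla_{x'}\tilde u|^2\,dx\leq C$.

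For Step~2, the Caccioppoli-type inequality \eqref{FsFt11} and the slab Poincar\'e inequality $\int_{\widehat\Omega_s(z')}|w|^2\,dx\leq C\delta(z')^2\int|\nabla w|^2\,dx$ are dimension-independent, whereas the $(d-1)$-dimensional $x'$-integration in the forcing term yields
$$\int_{\widehat\Omega_s(z')}|\mathcal{L}_{\lambda,\mu}\tilde u|^2\,dx\leq \begin{cases} C\,s^{d-1}/|z'|^4, & \sqrt\epsilon\leq|z'|\leq R,\ 0<s<\tfrac{2|z'|}{3},\\ C\,s^{d-1}/\epsilon^2, & 0\leq|z'|\leq\sqrt\epsilon,\ 0<s<\sqrt\epsilon.\end{cases}$$
Running the iteration of \eqref{tildeF111}--\eqref{tildeF111_in} with the same arithmetic sequences $t_i=\delta+2C_0 i|z'|^2$ and $t_i=\delta+2C_0 i\epsilon$, for $k\sim 1/|z'|$ (resp.\ $1/\sqrt\epsilon$) steps, then yields
$$\int_{\widehat\Omega_\delta(z')}|\nabla w|^2\,dx\leq \begin{cases} C\,|z'|^{2(d-1)}, & \sqrt\epsilon\leq|z'|\leq R,\\ C\,\epsilon^{d-1}, & 0\leq|z'|\leq\sqrt\epsilon.\end{cases}$$

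Step~3 uses the change of variables \eqref{changeofvariant} to flatten $\widehat\Omega_\delta(z')$ to a unit-size domain in $\mathbb{R}^d$, after which boundary $W^{2,p}$ estimates followed by Sobolev embedding give the $d$-dimensional analog of \eqref{nablaw11},
$$\|\nabla w\|_{L^\infty(\widehat\Omega_{\delta/2}(z'))}\leq C\delta^{-d/2}\|\nabla w\|_{L^2(\widehat\Omega_\delta(z'))}+C\delta\|\mathcal{L}_{\lambda,\mu}\tilde u\|_{L^\infty(\widehat\Omega_\delta(z'))}.$$
Inserting the Step~2 bounds together with $\|\mathcal{L}_{\lambda,\mu}\tilde u\|_{L^\infty}\leq C/|z'|^3$ (resp.\ $C/\epsilon^{3/2}$) produces $|\nabla w(z)|\leq C/|z'|$ for $\sqrt\epsilon\leq|z'|\leq R$ and $|\nabla w(z)|\leq C/\sqrt\epsilon$ for $|z'|\leq\sqrt\epsilon$; since $|z'|\leq R\leq 1$ and $\epsilon<1/2$, each is dominated by $C/(\epsilon+|z'|^2)$, proving \eqref{nabla_w_ialpha4}. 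Estimate \eqref{nablav14} then follows from $v_i^\alpha=w+\bar u_i^\alpha$ and $|\nabla\bar u_i^\alpha|\leq C/(\epsilon+|x'|^2)$ in $\Omega_R$. The main obstacle is to track the two dimension-dependent exponents consistently: the energy decay $|z'|^{d-1}$ from Step~2 must compensate the rescaling factor $\delta^{-d/2}\sim|z'|^{-d}$ in Step~3, which it does with one power of $|z'|$ to spare. This margin is why the logarithmic iteration and block-matrix inversion needed in dimension three are absent here, and why the unified bound $C/(\epsilon+|x'|^2)$ (weaker for rotations than what could actually be shown) already suffices for the $C/\epsilon$ rate in Theorem~\ref{mainthm2}.
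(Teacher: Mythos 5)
Your overall strategy is the same as the paper's---energy estimate, weighted Caccioppoli iteration on $\widehat\Omega_\delta(z')$, then rescaled $W^{2,p}$/Sobolev---but you depart from the paper by treating all $\alpha=1,\ldots,\tfrac{d(d+1)}{2}$ uniformly with the translation-type bound $|\mathcal{L}_{\lambda,\mu}\bar u_i^\alpha|\leq C(\epsilon+|x'|^2)^{-1}+C|x'|(\epsilon+|x'|^2)^{-2}$, whereas the paper separates $\alpha\leq d$ from $\alpha>d$ and, for the rotational $\psi^\alpha$, exploits the extra factor of $x$ to obtain the sharper bound $|\mathcal{L}_{\lambda,\mu}\bar u_i^\alpha|\leq C(\epsilon+|x'|^2)^{-1}$, hence $|\nabla w_i^\alpha|\leq C$. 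Your uniform route proves the weaker $|\nabla w_i^\alpha|\leq C/\sqrt\epsilon$ (resp.\ $C/|x'|$) for the rotational indices as well, and you correctly observe that this is dominated by $C/(\epsilon+|x'|^2)$, which is all that \eqref{nabla_w_ialpha4} and \eqref{nablav14} ask for; so the simplification is legitimate at the level of the stated proposition, though it forfeits the sharper information the paper actually establishes for rotations.

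There is, however, one genuine technical gap in your Step~1. You claim the energy bound \eqref{energy_w4d} follows ``exactly as in Step~1 of Proposition~\ref{prop1}'', using only $\int_{\Omega_R}|\nabla_{x'}\tilde u|^2\,dx\leq C$. The integration-by-parts trick in \eqref{energy_w_2} that produces only $|\nabla w|\,|\nabla_{x'}\tilde u|$ relies on $\partial_{x_d x_d}\bar u=0$, so that every second derivative $\partial_{x_k x_l}\tilde u$ has at least one index $<d$ and that derivative can be moved onto $w$. This fails for $\alpha>d$: since $\psi^\alpha$ depends on $x_d$, one has $\partial_{x_d x_d}(\bar u\psi^\alpha)=2(\partial_{x_d}\bar u)(\partial_{x_d}\psi^\alpha)\neq 0$, and the corresponding term produces $|\nabla w|\,|\partial_{x_d}\tilde u|$, not $|\nabla w|\,|\nabla_{x'}\tilde u|$. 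The fix is to use the full gradient, i.e.\ show $\int_{\Omega_R}|\nabla\tilde u|^2\,dx\leq C$, which for rotations follows from $|\nabla\bar u_i^\alpha|\leq C|x'|(\epsilon+|x'|^2)^{-1}+C$ as the paper does in Proposition~\ref{prop3} (and, incidentally, also holds for translations when $d\geq 4$ because $\int_{|x'|<R}(\epsilon+|x'|^2)^{-1}dx'$ converges when $d-1\geq 3$, unlike the $d=3$ case where the $\nabla_{x'}$ trick was essential). With this correction your argument closes.
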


\begin{proof} The proof is similar to that of Proposition \ref{prop1}, and we only point out the main difference. The proof of \eqref{energy_w4d} and \eqref{nablavialpha_outomega1/24d} are the same as that of \eqref{energy_w} and \eqref{nablavialpha_outomega1/2}. We prove \eqref{nabla_w_ialpha4}.

(i) For $\alpha=1,2,\cdots,d$, the same as \eqref{nablau_123},
\begin{equation}\label{nablau_123d4}
|\nabla\bar{u}_{i}^{\alpha}(x)|\leq\frac{C}{\epsilon+|x'|^{2}},\qquad\,x\in\Omega_{R},
\end{equation}
and, instead of \eqref{L_ubar_ialpha}, 
\begin{equation}\label{L_ubar_ialpha4}
\left|\mathcal{L}_{\lambda,\mu}\bar{u}_{i}^{\alpha}(x)\right|\leq\,C\sum_{k+l<2d}\left|\partial_{x_{k}x_{l}}\bar{u}(x)\right|\leq\frac{C}{\epsilon+|x'|^{2}}
+\frac{C|x'|}{(\epsilon+|x'|^{2})^{2}},\quad~x\in\Omega_{R}.
\end{equation}
Using \eqref{L_ubar_ialpha4}, we have, instead of \eqref{integal_Lubar11} and \eqref{tildeF111}, for $\sqrt{\epsilon}<|z'|<R$, $0<s<\frac{2|z'|}{3}$,
\begin{align}\label{integal_Lubar114}
\int_{\widehat{\Omega}_{s}(z')}\left|\mathcal{L}_{\lambda,\mu}\bar{u}_{i}^{\alpha}\right|^{2}dx
&\leq\,C\int_{|x'-z'|<s}\left(\frac{1}{\epsilon+|x'|^{2}}+\frac{|x'|^{2}}{(\epsilon+|x'|^{2})^{3}}\right)dx'
\leq\frac{Cs^{d-1}}{|z'|^{4}},
\end{align}
and denoting $\widehat{F}(t):=\int_{\widehat{\Omega}_{t}(z')}|\nabla(v_{i}^{\alpha}-\bar{u}_{i}^{\alpha})|^{2}dx$,
\begin{equation}\label{tildeF111d4}
\widehat{F}(t)\leq\,\left(\frac{C_{0}|z'|^{2}}{s-t}\right)^{2}\widehat{F}(s)+C(s-t)^{2}\frac{s^{d-1}}{|z'|^{4}},
\qquad\forall~0<t<s<\frac{2|z'|}{3}.
\end{equation}
Similar as Case 1 of Step 2 in the proof of Proposition \ref{prop1}, set $t_{i}=\delta+2C_{0}i\,|z'|^{2}$, $i=0,1,2,\cdots$, and let $k=\left[\frac{1}{4C_{0}|z'|}\right]$. Using \eqref{tildeF111d4} with $s=t_{i+1}$ and $t=t_{i}$, we have
$$\widehat{F}(t_{i})\leq\,\frac{1}{4}\widehat{F}(t_{i+1})+\frac{C(t_{i+1}-t_{i})^{2}t_{i+1}^{d-1}}{|z'|^{4}}
\leq\,\frac{1}{4}\widehat{F}(t_{i+1})+C(i+1)^{2}|z'|^{2(d-1)},\quad\,i=1,2,\cdots,k.
$$
After $k$ iterations,  we obtain
$$\int_{\widehat{\Omega}_{\delta}(z')}|\nabla(v_{i}^{\alpha}-\bar{u}_{i}^{\alpha})|^{2}dx=\widehat{F}(t_{0})\leq\,C|z'|^{2(d-1)},\quad\forall~\sqrt{\epsilon}<|z'|<R.$$

Instead of \eqref{integal_Lubar11_in} and \eqref{tildeF111_in}, using \eqref{L_ubar_ialpha4}, for $0\leq|z'|<\sqrt{\epsilon}$, $0<s<\sqrt{\epsilon}$,
\begin{align}\label{integal_Lubar11_ind}
\int_{\widehat{\Omega}_{s}(z')}\left|\mathcal{L}_{\lambda,\mu}\tilde{u}\right|^{2}
&\leq\int_{|x'-z'|<s}\left(\frac{C}{\epsilon+|x'|^{2}}+\frac{C|x'|^{2}}{(\epsilon+|x'|^{2})^{3}}\right)dx'
\leq\,\frac{Cs^{d-1}}{\epsilon^{2}},
\end{align}
and
\begin{equation}\label{tildeF111_ind4}
\widehat{F}(t)\leq\,\left(\frac{C_{0}\epsilon}{s-t}\right)^{2}\widehat{F}(s)
+C(s-t)^{2}\frac{s^{d-1}}{\epsilon^{2}},
\quad\forall~0<t<s<\sqrt{\epsilon}.
\end{equation}
Let $t_{i}=\delta+2C_{0}i\epsilon$, $i=0,1,2,\cdots$ and $k=\left[\frac{1}{4C_{0}\sqrt{\epsilon}}\right]$. By \eqref{tildeF111_ind4} with $s=t_{i+1}$ and $t=t_{i}$, we have
\begin{align*}
\widehat{F}(t_{i})&\leq\,\frac{1}{4}\widehat{F}(t_{i+1})+\frac{C\epsilon^{2}t_{i+1}^{d-1}}{\epsilon^{2}}
\leq\,\frac{1}{4}\widehat{F}(t_{i+1})+C(i+1)^{2}\epsilon^{d-1},\quad\,i=1,2,\cdots,k.
\end{align*}
After $k$ iterations, we have
$$\int_{\widehat{\Omega}_{\delta}(z')}|\nabla(v_{i}^{\alpha}-\bar{u}_{i}^{\alpha})|^{2}dx =\widehat{F}(t_{0})\leq\,C\epsilon^{d-1},\quad\forall~0\leq|z'|<\sqrt{\epsilon}.$$
Therefore, we have, instead of \eqref{energy_w_inomega_z1},
\begin{align}\label{energy_w_inomega_z14}
\int_{\widehat{\Omega}_{\delta}(z')}\left|\nabla{w}_{i}^{\alpha}\right|^{2}dx&\leq
\begin{cases}
\displaystyle C\epsilon^{d-1},&0\leq|z'|\leq\sqrt{\epsilon},\\
\displaystyle C|z'|^{2(d-1)},&\sqrt{\epsilon}<|z'|\leq\,R.
\end{cases}
\end{align}

As in Step 3 of the proof of Proposition \ref{prop1}, we have, instead of \eqref{nablaw11}, 
\begin{equation}\label{nablaw11d4}
\left\|\nabla(v_{i}^{\alpha}-\bar{u}_{i}^{\alpha})\right\|_{L^{\infty}(\widehat{\Omega}_{\delta/2}(z'))}\leq\,
\frac{C}{\delta}\left(\delta^{1-\frac{d}{2}}\left\|\nabla(v_{i}^{\alpha}-\bar{u}_{i}^{\alpha})\right\|_{L^{2}(\widehat{\Omega}_{\delta}(z'))}
+\delta^{2}\left\|\mathcal{L}_{\lambda,\mu}\bar{u}_{i}^{\alpha}\right\|_{L^{\infty}(\widehat{\Omega}_{\delta}(z'))}\right).
\end{equation}
Using \eqref{energy_w_inomega_z14} and \eqref{L_ubar_ialpha4}, we obtain 
\begin{equation}\label{nabla_w_ialpha41}
\left|\nabla(v_{i}^{\alpha}-\bar{u}_{i}^{\alpha})(x)\right|\leq
\begin{cases}
 \frac{C}{\sqrt{\epsilon}},&|x'|\leq\sqrt{\epsilon},\\
 \frac{C}{|x'|},&\sqrt{\epsilon}<|x'|\leq\,R.
\end{cases}
\end{equation}
Consequently, 
\eqref{nablav14} follows from \eqref{nablau_123d4} immediately.

(ii) For $d\leq\alpha\leq\frac{d(d+1)}{2}$, we have
\begin{equation}\label{nablau_123d42}
|\nabla\bar{u}_{i}^{\alpha}(x)|\leq\frac{C|x'|}{\epsilon+|x'|^{2}}+C,\qquad\,x\in\Omega_{R},
\end{equation}
and, instead of \eqref{L_ubar_i3}, 
\begin{equation}\label{L_ubar_i34}
|\mathcal{L}_{\lambda,\mu}\bar{u}_{i}^{\alpha}|
\leq\,C\left(|\nabla\bar{u}|+(\epsilon+|x'|)\sum_{k+l<2d}|\partial_{x_{k}x_{l}}\bar{u}|\right)
\leq\frac{C}{\epsilon+|x'|^{2}},\quad\,x\in\Omega_{R}.
\end{equation}
Using \eqref{L_ubar_i34}, we obtain, for $\sqrt{\epsilon}\leq|z'|\leq\,R$, $0<t<s<\frac{2|z'|}{3}$, instead of \eqref{Lu13},
\begin{align}\label{Lu13d}
\int_{\widehat{\Omega}_{s}(z')}\left|\mathcal{L}_{\lambda,\mu}\tilde{u}\right|^{2}dx
\leq\int_{|x'-z'|<s}\frac{C}{\epsilon+|x'|^{2}}\,dx'\leq\frac{Cs^{d-1}}{|z'|^{2}},
\end{align}
Thus, we have
\begin{equation}\label{tildeF111d42}
\widehat{F}(t)\leq\,\left(\frac{C_{0}|z'|^{2}}{s-t}\right)^{2}\widehat{F}(s)+C(s-t)^{2}\frac{s^{d-1}}{|z'|^{2}},
\qquad\forall~0<t<s<\frac{2|z'|}{3}.
\end{equation}
Taking the same iteration procedure as Case 1 of Step 2 in the proof of Proposition \ref{prop1}, set $t_{i}=\delta+2C_{0}i\,|z'|^{2}$, $i=0,1,2,\cdots$, and let $k=\left[\frac{1}{4C_{0}|z'|}\right]$. Using \eqref{tildeF111d42} with $s=t_{i+1}$ and $t=t_{i}$, we have
$$\widehat{F}(t_{i})\leq\,\frac{1}{4}\widehat{F}(t_{i+1})+\frac{C(t_{i+1}-t_{i})^{2}t_{i+1}^{d-1}}{|z'|^{2}}
\leq\,\frac{1}{4}\widehat{F}(t_{i+1})+C(i+1)^{2}|z'|^{2d},\quad\,i=1,2,\cdots,k.
$$
After $k$ iterations,  we obtain
$$\int_{\widehat{\Omega}_{\delta}(z')}|\nabla(v_{i}^{\alpha}-\bar{u}_{i}^{\alpha})|^{2}dx=\widehat{F}(t_{0})\leq\,C|z'|^{2d},\quad\forall~\sqrt{\epsilon}<|z'|<R.$$

For $0\leq|z'|\leq\sqrt{\epsilon}$, $0<t<s<\sqrt{\epsilon}$, using \eqref{L_ubar_i34}, we have, instead of \eqref{integal_Lubar13_in},
\begin{align}\label{integal_Lubar13_ind}
\int_{\widehat{\Omega}_{s}(z')}\left|\mathcal{L}_{\lambda,\mu}\tilde{u}\right|^{2}dx
\leq\int_{|x'-z'|<s}\frac{C}{\epsilon+|x'|^{2}}\,dx'\leq\frac{Cs^{d-1}}{\epsilon},\qquad\,0<s<\sqrt{\epsilon}.
\end{align}
Then similarly as before, we have
\begin{equation*}\label{tildeF111d42}
\widehat{F}(t)\leq\,\left(\frac{C_{0}|z'|^{2}}{s-t}\right)^{2}\widehat{F}(s)+C(s-t)^{2}\frac{s^{d-1}}{\epsilon},
\qquad\forall~0<t<s<\frac{2|z'|}{3}.
\end{equation*}
and iteration formula
$$\widehat{F}(t_{i})\leq\,\frac{1}{4}\widehat{F}(t_{i+1})+\frac{C(t_{i+1}-t_{i})^{2}t_{i+1}^{d-1}}{\epsilon}
\leq\,\frac{1}{4}\widehat{F}(t_{i+1})+C(i+1)^{2}\epsilon^{d},\quad\,i=1,2,\cdots,k.
$$
Thus, we obtain
\begin{equation}\label{energyw13_outd}
\int_{\widehat{\Omega}_{\delta}(z')}|\nabla{w}_{i}^{\alpha}|^{2}dx\leq
\begin{cases}
C|z'|^{2d},&\sqrt{\epsilon}\leq|z'|<R,\\
C\epsilon^{d},&0\leq|z'|<\sqrt{\epsilon}.
\end{cases}
\end{equation}
Therefore, as in the proof of Proposition \ref{prop3}, using \eqref{nablaw11d4}, \eqref{energyw13_outd} and \eqref{L_ubar_ialpha4}, we have, for $i=1,2,~d\leq\alpha\leq\frac{d(d+1)}{2}$,
\begin{equation}\label{nabla_w13d4}
|\nabla(v_{i}^{\alpha}-\bar{u}_{i}^{\alpha})(x',x_{d})|\leq\,C,\qquad\,x\in\Omega_{R}.
\end{equation}
Consequently, using \eqref{nablau_123d42},
\begin{equation}\label{nabla_v13d4}
|\nabla{v}_{i}^{\alpha}(x',x_{d})|\leq\,\frac{C|x'|}{\epsilon+|x'|^{2}}+C,\qquad\,x\in\Omega_{R}.
\end{equation}
The proof of Proposition \ref{prop5} is completed.
\end{proof}

\begin{proof}[Proof of Theorem \ref{mainthm2}]  By the same argument, using Lemma \ref{lem_xi} for $d\geq4$, we still have \eqref{C1C2bound} for dimensions $d\geq4$. Using Proposition \ref{prop5}, Theorem \ref{mainthm2} follows.
\end{proof}

\section{Appendix: Lemmas on $\Psi$ and matrices}\label{sec_appendix}

We first give a lemma on the linear space of rigid displacement $\Psi$.
\begin{lemma}\label{lem_xi}
Let $\xi$ be an element of $\Psi$, defined by \eqref{def_Psid} with $d\geq2$. 
If $\xi$  vanishes at $d$ distinct  points  $\bar{x}^{1},\bar{x}^{2},\cdots,\bar{x}^{d}$, which do not lie on a $(d-2)$-dimensional plane, then $\xi\equiv0$.
\end{lemma}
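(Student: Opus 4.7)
The plan is to use the explicit structure of $\Psi$ and reduce the statement to a rank argument for antisymmetric matrices.

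First, I would recall that any $\xi \in \Psi$ has the form $\xi(x) = a + B x$ for some $a \in \mathbb{R}^d$ and some antisymmetric matrix $B \in \mathbb{R}^{d\times d}$ (i.e., $B^T = -B$); this is immediate from integrating $\nabla \xi + (\nabla \xi)^T = 0$, and is consistent with $\dim \Psi = d + \binom{d}{2} = d(d+1)/2$. Using the first vanishing point, $\xi(\bar{x}^1) = 0$ forces $a = -B\bar{x}^1$, so
\begin{equation*}
\xi(x) = B\bigl(x - \bar{x}^1\bigr).
\end{equation*}
The remaining conditions $\xi(\bar{x}^i) = 0$ for $i = 2, \ldots, d$ then read $B v_i = 0$, where $v_i := \bar{x}^i - \bar{x}^1$.

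Next, I would translate the geometric hypothesis into a linear-algebraic one. The points $\bar{x}^1, \ldots, \bar{x}^d$ failing to lie on any $(d-2)$-dimensional affine plane is exactly the statement that their affine hull has dimension $d-1$, i.e.\ that $v_2, \ldots, v_d$ are linearly independent. Consequently $\ker B$ contains a subspace of dimension $d-1$, so $\operatorname{rank} B \leq 1$.

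The key observation that closes the argument is that a real antisymmetric matrix has even rank: its nonzero eigenvalues come in conjugate pairs $\pm i\lambda$ with $\lambda \in \mathbb{R}\setminus\{0\}$, so the rank is twice the number of such pairs. Combined with $\operatorname{rank} B \leq 1$ this forces $\operatorname{rank} B = 0$, i.e.\ $B = 0$, and then $a = 0$, so $\xi \equiv 0$. I do not expect any genuine obstacle here; the only thing to verify carefully is the equivalence between the ``not on a $(d-2)$-plane'' hypothesis and the linear independence of $\{v_i\}_{i=2}^d$, which is standard affine geometry.
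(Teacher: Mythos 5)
Your proposal is correct and follows essentially the same route as the paper: write $\xi(x)=Bx+a$ with $B$ antisymmetric, translate the affine hypothesis into linear independence of the difference vectors, deduce $\operatorname{rank} B\le 1$, and conclude $B=0$ from antisymmetry. The only cosmetic differences are the choice of base point ($\bar{x}^1$ versus $\bar{x}^d$) and that you spell out why an antisymmetric matrix of rank $\le 1$ must vanish (even rank via the $\pm i\lambda$ eigenvalue pairing), which the paper leaves implicit.
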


\begin{proof}
Since $\xi\in\Psi$, it follows that
$$\xi(x)=Ax+b,$$
for some $b\in\mathbb{R}^{d}$ and some $d\times{d}$ skew symmetric matrix $A$. Let
$$\bar{y}^{i}=\bar{x}^{i}-\bar{x}^{d},\quad\,1\leq\,i\leq\,d-1.$$
By the assumption, $\bar{y}^{1},\cdots,\bar{y}^{d-1}$ is linearly independent.
It follows from $\xi(\bar{x}^{i})=0$ that
$$A\bar{y}^{i}=\xi(\bar{x}^{i})-\xi(\bar{x}^{d})=0,\quad\,1\leq\,i\leq\,d-1.$$ 
Therefore $\mathrm{Rank}~ A\leq1$. This, together with $A^{T}+A=0$, implies $A=0$. Recalling that $\xi(\bar{x}^{1})=0$, we have $b=0$. So $\xi\equiv0$.
\end{proof}

Here we prove a linear algebraic lemma used in the proof of Proposition \ref{prop_C1-C2}. We will use notation $\|B\|=\left(\sum_{i,j}|B_{ij}|^{2}\right)^{1/2}$ for a matrix $B$.

\begin{lemma}\label{lem_matrix}
For $m\geq1$, let $A, D$ be $m\times{m}$ invertible matrices and $B$ and $C$ be $m\times{m}$ matrices satisfying, for some $0<\theta<1$ and $\gamma>1$,
\begin{equation}\label{equ_matrix_abd}
\|A^{-1}\|\leq \frac{1}{\theta\gamma}, \qquad \|B\|+\|C\|+\|D^{-1}\|\leq\frac{1}{\theta}.
\end{equation}
Then there exists $\bar{\gamma}=\bar{\gamma}(m)>1$ and $C(m)>1$, such that  if $\gamma\geq\frac{\bar{\gamma}(m)}{\theta^{4}}$,
$$\begin{pmatrix}
  A & B ~\\\\
  C & D ~\\
\end{pmatrix}$$
is invertible. Moreover,
$$\begin{pmatrix}
  E_{11} & E_{12} ~\\\\
  E_{12}^{T} & E_{22} ~\\
\end{pmatrix}:=\begin{pmatrix}
  A & B ~\\\\
  C & D ~\\
\end{pmatrix}^{-1}
-\begin{pmatrix}
  A^{-1} & 0 \\\\
  0 & D^{-1} \\
\end{pmatrix}$$
satisfies
$$\left\|E_{11}\right\|\leq\frac{C(m)}{\theta^{5}\gamma^{2}},\qquad
\left\|E_{12}\right\|\leq\frac{C(m)}{\theta^{3}\gamma},
\quad\mbox{and}~~\left\|E_{22}\right\|\leq\frac{C(m)}{\theta^{5}\gamma}.$$
\end{lemma}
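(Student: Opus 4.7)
The plan is to perform a Schur-complement reduction with respect to $A$, exploiting the fact that $\|A^{-1}\|$ is small of order $1/(\theta\gamma)$ while the other data are of order $1/\theta$. Set $S := D - CA^{-1}B$. Then
\begin{equation*}
\|D^{-1}CA^{-1}B\|\ \leq\ \|D^{-1}\|\,\|C\|\,\|A^{-1}\|\,\|B\|\ \leq\ \frac{1}{\theta^{4}\gamma}.
\end{equation*}
I would choose $\bar\gamma(m)$ large enough, absorbing the dimensional factor that relates the Frobenius norm in the hypotheses to the operator norm used in the Neumann series, so that whenever $\gamma\geq\bar\gamma(m)/\theta^{4}$ the operator norm of $D^{-1}CA^{-1}B$ is at most $1/2$. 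The Neumann series then shows $I - D^{-1}CA^{-1}B$ is invertible with norm at most $2$, hence $S = D(I - D^{-1}CA^{-1}B)$ is invertible and $\|S^{-1}\|\leq C(m)/\theta$.

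Next, a direct multiplication verifies the standard block-inverse identity
\begin{equation*}
\begin{pmatrix} A & B \\ C & D \end{pmatrix}^{-1}
=\begin{pmatrix} A^{-1}+A^{-1}BS^{-1}CA^{-1} & -A^{-1}BS^{-1} \\ -S^{-1}CA^{-1} & S^{-1} \end{pmatrix},
\end{equation*}
valid whenever $A$ and $S$ are invertible. Subtracting $\mathrm{diag}(A^{-1},D^{-1})$ and using the resolvent identity $S^{-1}-D^{-1} = D^{-1}(D-S)S^{-1} = -D^{-1}CA^{-1}BS^{-1}$ gives the closed-form expressions
\begin{equation*}
E_{11}=A^{-1}BS^{-1}CA^{-1},\qquad E_{12}=-A^{-1}BS^{-1},\qquad E_{22}=-D^{-1}CA^{-1}BS^{-1}.
\end{equation*}
The stated bounds then follow from submultiplicativity of the norm combined with the four estimates $\|A^{-1}\|\leq 1/(\theta\gamma)$, $\|B\|,\|C\|,\|D^{-1}\|\leq 1/\theta$, and $\|S^{-1}\|\leq C(m)/\theta$: namely $\|E_{11}\|\leq \|A^{-1}\|^{2}\|B\|\|C\|\|S^{-1}\|\leq C(m)/(\theta^{5}\gamma^{2})$, $\|E_{12}\|\leq \|A^{-1}\|\|B\|\|S^{-1}\|\leq C(m)/(\theta^{3}\gamma)$, and $\|E_{22}\|\leq \|D^{-1}\|\|C\|\|A^{-1}\|\|B\|\|S^{-1}\|\leq C(m)/(\theta^{5}\gamma)$.

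The argument is essentially mechanical once one chooses the correct Schur block; no step is truly hard. The only points requiring care are (a) the choice of norm, since the hypotheses are in the Frobenius norm while the Neumann series is most cleanly controlled in the operator norm, which is what forces the constant $C(m)$ and the exponent $4$ in the threshold $\bar\gamma(m)/\theta^{4}$; and (b) the appearance of $E_{12}^{T}$ in the lower-left block of the statement, which reflects the symmetry $B=C^{T}$, $A=A^{T}$, $D=D^{T}$ present in the intended application (the matrix $a_{11}$), in which case $S$ is symmetric and the transpose of $-A^{-1}BS^{-1}$ coincides with $-S^{-1}CA^{-1}$.
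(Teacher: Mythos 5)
Your proof is correct and follows essentially the same route as the paper: form the Schur complement $S = D - CA^{-1}B$, show it is invertible with $\|S^{-1}\|\lesssim 1/\theta$ when $\gamma\gtrsim \theta^{-4}$, invoke the block-inverse formula, and estimate each block by submultiplicativity. One small slip: since $D - S = CA^{-1}B$, the resolvent identity gives $S^{-1}-D^{-1} = D^{-1}CA^{-1}BS^{-1}$ (no minus sign), but this has no effect on the norm bounds.
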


\begin{proof}
Clearly
$$
\begin{pmatrix}
  I & 0 ~\\\\
  -CA^{-1} & I ~\\
\end{pmatrix}
\begin{pmatrix}
  A & B ~\\\\
  C & D ~\\
\end{pmatrix}=
\begin{pmatrix}
  A & B ~\\\\
  0 & D-CA^{-1}B ~\\
\end{pmatrix},
$$
where $I$ is the $m\times{m}$ identity matrix. Since
$$\left\|CA^{-1}B\right\|\leq\frac{C_{1}(m)}{\theta^{3}\gamma},$$
for some constant $C_{1}(m)$ depending only on $m$, there exists some constant $\gamma_{1}(m)$, depending only on $m$, such that for  $\gamma\geq\frac{\gamma_{1}(m)}{\theta^{4}}$, $D-CA^{-1}B$ is invertible and
\begin{equation}\label{equD2-1}
\left\|\Big(D-CA^{-1}B\Big)^{-1}\right\|\leq\frac{2}{\theta}.
\end{equation}
Then
\begin{align*}
\begin{pmatrix}
  A & B ~\\\\
  C & D ~\\
\end{pmatrix}^{-1}
=&
\begin{pmatrix}
  A & B ~\\\\
  0 & D-CA^{-1}B ~\\
\end{pmatrix}^{-1}
\begin{pmatrix}
  I & 0 ~\\\\
  -CA^{-1} & I ~\\
\end{pmatrix}\\
=&
\begin{pmatrix}
  A^{-1} & -A^{-1}B~\Big(D-CA^{-1}B\Big)^{-1} ~\\\\
  0 & \Big(D-CA^{-1}B\Big)^{-1} ~\\
\end{pmatrix}
\begin{pmatrix}
  I & 0 ~\\\\
  -CA^{-1} & I ~\\
\end{pmatrix}\\
=&
\begin{pmatrix}
  A^{-1}+A^{-1}B~\Big(D-CA^{-1}B\Big)^{-1}CA^{-1} & -A^{-1}B~\Big(D-CA^{-1}B\Big)^{-1} ~\\\\
  -\Big(D-CA^{-1}B\Big)^{-1}CA^{-1} & \Big(D-CA^{-1}B\Big)^{-1} ~\\
\end{pmatrix}.
\end{align*}
The estimates for $|E_{11}|$ and $E_{12}$ follow from \eqref{equ_matrix_abd} and \eqref{equD2-1}. For $|E_{22}|$, we have
$$\|E_{22}\|=\left\|\Big((I-D^{-1}CA^{-1}B)^{-1}-I\Big)\,D^{-1}\right\|
\leq\,C(m)\left\|D^{-1}CA^{-1}B\right\|\left\|D^{-1}\right\|\leq\frac{C(m)}{\theta^{5}\gamma}.$$
The proof is finished.
\end{proof}

\noindent{\bf{\large Acknowledgements.}}
H.G. Li was partially supported by  NSFC (11571042). The work of Y.Y. Li was partially supported by NSF grant
DMS-1501004. All authors were partially supported by NSFC (11371060) and the Fundamental Research Funds for the Central Universities.

\vspace{5mm}


\bibliographystyle{amsplain}
\bibliography{References}

\end{document}